\newcommand{\SComment}[1]{\Comment{{\scriptsize #1}}} 
    \theoremstyle{plain}
    \newtheorem{theorem}{Theorem}
	\newtheorem{corollary}{Corollary}
 	\newtheorem{lemma}{Lemma}
    \theoremstyle{definition}
    \theoremstyle{remark}
	\newtheorem{remark}{Remark}
\newcolumntype{P}[1]{>{\centering\arraybackslash}p{#1}}
\newcolumntype{M}[1]{>{\centering\arraybackslash}m{#1}}
\newcommand{\chol}{\texttt{chol}\xspace}
\newcommand{\spR}{\mathbb{R}}				
\newcommand{\vv}{\bm{v}}
\newcommand{\vC}{\bm{C}}
\newcommand{\vD}{\bm{D}}
\newcommand{\vE}{\bm{E}}
\newcommand{\vG}{\bm{G}}
\newcommand{\vQ}{\bm{Q}}
\newcommand{\vQbar}{\bar{\vQ}}
\newcommand{\vR}{\bm{R}}
\newcommand{\vS}{\bm{S}}
\newcommand{\vT}{\bm{T}}
\newcommand{\vU}{\bm{U}}
\newcommand{\vUbar}{\bar{\vU}}
\newcommand{\vV}{\bm{V}}
\newcommand{\vVbar}{\bar{\vV}}
\newcommand{\vW}{\bm{W}}
\newcommand{\vWbar}{\bar{\vW}}
\newcommand{\vX}{\bm{X}}
\newcommand{\vY}{\bm{Y}}
\newcommand{\vZ}{\bm{Z}}
\newcommand{\Pbar}{\bar{P}}
\newcommand{\Rbar}{\bar{R}}
\newcommand{\Sbar}{\bar{S}}
\newcommand{\Stil}{\widetilde{S}}
\newcommand{\Ttil}{\widetilde{T}}
\newcommand{\Tbar}{\bar{T}}
\newcommand{\Ombar}{\bar{\Omega}}
\newcommand{\Vtil}{\widetilde{V}}
\newcommand{\DD}{\mathcal{D}}
\newcommand{\EE}{\mathcal{E}}
\newcommand{\RR}{\mathcal{R}}
\newcommand{\RRbar}{\bar{\RR}}
\renewcommand{\SS}{\mathcal{S}}
\newcommand{\SSbar}{\bar{\SS}}
\newcommand{\TT}{\mathcal{T}}
\newcommand{\TTbar}{\bar{\TT}}
\newcommand{\bDD}{\bm{\mathcal{D}}}
\newcommand{\bQQ}{\bm{\mathcal{Q}}}
\newcommand{\bQQbar}{\bar{\bQQ}}
\newcommand{\bUU}{\bm{\mathcal{U}}}
\newcommand{\bUUbar}{\bar{\bUU}}
\newcommand{\bVV}{\bm{\mathcal{V}}}
\newcommand{\bXX}{\bm{\mathcal{X}}}
\newcommand{\inv}{{-1}}
\newcommand{\tinv}{{-T}}
\newcommand{\ihalf}{{-1/2}}
\newcommand{\bigO}[1]{\mathcal{O}\left(#1\right)}
\newcommand{\lmin}{\lambda_{\min}}
\newcommand{\sigmin}{\sigma_{\min}}
\newcommand{\sigmax}{\sigma_{\max}}
\newcommand{\norm}[1]{\left\lVert#1\right\rVert}
\newcommand{\normF}[1]{\norm{#1}_{\text{F}}}
\newcommand{\bmat}[1]{\begin{bmatrix}#1\end{bmatrix}}
\newsavebox{\@brx}
\newcommand{\llangle}[1][]{\savebox{\@brx}{\(\m@th{#1\langle}\)}%
	\mathopen{\copy\@brx\kern-0.5\wd\@brx\usebox{\@brx}}}
\newcommand{\rrangle}[1][]{\savebox{\@brx}{\(\m@th{#1\rangle}\)}%
	\mathclose{\copy\@brx\kern-0.5\wd\@brx\usebox{\@brx}}}
\newcommand{\eps}{\varepsilon}
\newcommand{\epslo}{\eps_{\ell}}
\newcommand{\epshi}{\eps_h}
\newcommand{\DeltaC}{\Delta C}
\newcommand{\DeltaCone}{\DeltaC^{(1)}}
\newcommand{\DeltaCtwo}{\DeltaC^{(2)}}
\newcommand{\DeltaE}{\Delta E}
\newcommand{\DeltaF}{\Delta F}
\newcommand{\DeltaG}{\Delta G}
\newcommand{\DeltaM}{\Delta M}
\newcommand{\DeltaFone}{\DeltaF^{(1)}}
\newcommand{\DeltaFtwo}{\DeltaF^{(2)}}
\newcommand{\DeltaH}{\Delta H}
\newcommand{\DeltaJ}{\Delta J}
\newcommand{\DeltaL}{\Delta L}
\newcommand{\DeltaP}{\Delta P}
\newcommand{\DeltaR}{\Delta R}
\newcommand{\DeltaS}{\Delta S}
\newcommand{\DeltaStil}{\Delta\Stil}
\newcommand{\DeltaTtil}{\Delta\Ttil}
\newcommand{\DeltaOmega}{\Delta\Omega}
\newcommand{\DeltavD}{\Delta\vD}
\newcommand{\DeltavE}{\Delta\vE}
\newcommand{\DeltavG}{\Delta\vG}
\newcommand{\DeltavGone}{\DeltavG^{(1)}}
\newcommand{\DeltavGtwo}{\DeltavG^{(2)}}
\newcommand{\DeltavR}{\Delta\vR}
\newcommand{\DeltavS}{\Delta\vS}
\newcommand{\DeltavT}{\Delta\vT}
\newcommand{\DeltavU}{\Delta\vU}
\newcommand{\DeltavV}{\Delta\vV}
\newcommand{\DeltavW}{\Delta\vW}
\newcommand{\DeltavX}{\Delta\vX}
\newcommand{\DeltaDD}{\Delta\DD}
\newcommand{\DeltaDDTS}{\DeltaDD_{TS}}
\newcommand{\DeltaEE}{\Delta\EE}
\newcommand{\DeltabDD}{\Delta\bDD}
\newcommand{\DeltabDDUS}{\DeltabDD_{US}}
\newcommand{\DeltabDDQT}{\DeltabDD_{QT}}
\newcommand{\omegaU}{\omega_{U}}
\newcommand{\omegaQ}{\omega_{Q}}
\newcommand{\deltacholone}{\delta_{\chol_1\xspace}}
\newcommand{\deltachol}{\delta_{\chol\xspace}}
\newcommand{\deltacholtwo}{\delta_{\chol_2\xspace}}
\newcommand{\deltaQ}{\delta_{Q}}
\newcommand{\deltaQS}{\delta_{QS}}
\newcommand{\deltaQT}{\delta_{QT}}
\newcommand{\deltaQU}{\delta_{Q^TU}}
\newcommand{\deltaQX}{\delta_{Q^TX}}
\newcommand{\deltaSS}{\delta_{S^TS}}
\newcommand{\deltaTS}{\delta_{TS}}
\newcommand{\deltaTT}{\delta_{T^TT}}
\newcommand{\deltaU}{\delta_{U}}
\newcommand{\deltaUS}{\delta_{US}}
\newcommand{\deltaUU}{\delta_{U^TU}}
\newcommand{\deltaX}{\delta_{X}}
\newcommand{\deltaXX}{\delta_{X^TX}}
\newcommand{\deltaRR}{\delta_{R^TR}}
\newcommand{\deltaQR}{\delta_{QR}}
\newcommand{\rhomax}{\rho_{\max}}
\newcommand{\ximax}{\xi_{\max}}
\newcommand{\default}{\texttt{default}\xspace}
\newcommand{\monomial}{\texttt{monomial}\xspace}
\newcommand{\glued}{\texttt{glued}\xspace}
\newcommand{\piled}{\texttt{piled}\xspace}
\newcommand{\BCGS}{\texttt{BCGS}\xspace}	
\newcommand{\BCGSPIP}{\hyperref[alg:PIP]{\texttt{BCGS-PIP}}\xspace}	
\newcommand{\BCGSPIPRO}{\hyperref[alg:PIP+]{\texttt{BCGS-PIP+}}\xspace}	
\newcommand{\BCGSPIPIRO}{\hyperref[alg:PIPI+]{\texttt{BCGS-PIPI+}}\xspace}	
\newcommand{\BCGSPIPMP}{\hyperref[alg:PIPMP]{\texttt{BCGS-PIP$^{\MP}$}}\xspace}	
\newcommand{\BCGSPIPROMP}{\hyperref[alg:PIP+MP]{\texttt{BCGS-PIP+$^{\MP}$}}\xspace}	
\newcommand{\BCGSPIPIROMP}{\hyperref[alg:PIPI+MP]{\texttt{BCGS-PIPI+$^{\MP}$}}\xspace}	
\newcommand{\IO}[1]{\texttt{IO}\left(#1\right)}	
\newcommand{\IOnoarg}{\texttt{IO}\xspace}	
\newcommand{\HouseQR}{\texttt{HouseQR}\xspace}	
\newcommand{\TSQR}{\texttt{TSQR}\xspace}	
\newcommand{\CholQR}{\texttt{CholQR}\xspace}	
\newcommand{\ShCholQRRORO}{\texttt{ShCholQR++}\xspace}	
\definecolor{plotblue}{RGB}{0, 113.9850, 188.9550}
\definecolor{plotred}{RGB}{216.7500, 82.8750, 24.9900}
\definecolor{plotpurple}{RGB}{125.9700, 46.9200, 141.7800}
\begin{document}


\title{Reorthogonalized Pythagorean variants of block classical Gram-Schmidt}

\author[$\ast$]{Erin Carson}
\affil[$\ast$]{Department of Numerical Mathematics, Faculty of Mathematics and Physics, Charles University, Sokolovsk\'{a} 49/83, 186 75 Praha 8, Czechia\authorcr
  \email{\{carson, oktay\}@karlin.mff.cuni.cz}, \email{yuxin.ma@matfyz.cuni.cz}, \orcid{0000-0001-9469-7467}, \orcid{0000-0003-0761-2184}, \orcid{0000-0002-2860-0134}}
  
\author[$\dagger$,$\ddagger$]{Kathryn Lund}
\affil[$\dagger$]{Computational Methods in Systems and Control Theory, Max Planck Institute for Dynamics of Complex Technical Systems, Sandtorstr.\ 1, 39106 Magdeburg, Germany}

\affil[$\ddagger$]{Computational Mathematics Theme, Building R71, STFC Rutherford Appleton Laboratory, Harwell Oxford, Didcot, Oxfordshire, OX11 0QX, United Kingdom\authorcr
    \email{kathryn.lund@stfc.ac.uk}, \orcid{0000-0001-9851-6061}}

\author[$\ast$]{Yuxin Ma}

\author[$\ast,\S$]{Eda Oktay}
\affil[$\S$]{Department of Mathematics, Chemnitz University of Technology, Reichenhainer Str. 41, 09126 Chemnitz, Germany\authorcr}
  
\shortauthor{E. Carson, K. Lund, Y. Ma, and E. Oktay}
  
\keywords{
	Gram-Schmidt algorithm, low-synchronization, communication-avoiding, mixed precision, multiprecision, loss of orthogonality, stability
}

\msc{
	65-04, 65F25, 65G50, 65Y20
}
  
\abstract{
    Block classical Gram-Schmidt (BCGS) is commonly used for orthogonalizing a set of vectors $X$ in distributed computing environments due to its favorable communication properties relative to other orthogonalization approaches, such as modified Gram-Schmidt or Householder.  However, it is known that BCGS (as well as recently developed low-synchronization variants of BCGS) can suffer from a significant loss of orthogonality in finite-precision arithmetic, which can contribute to instability and inaccurate solutions in downstream applications such as $s$-step Krylov subspace methods. A common solution to improve the orthogonality among the vectors is reorthogonalization.  Focusing on the ``Pythagorean" variant of BCGS, introduced in [E.~Carson, K.~Lund, \& M.~Rozlo\v{z}n\'{i}k. \emph{SIAM J.~Matrix Anal.~Appl.} 42(3), pp.~1365--1380, 2021], which guarantees an $O(\varepsilon)\kappa^2(X)$ bound on the loss of orthogonality as long as $O(\varepsilon)\kappa^2(X)<1$, where $\varepsilon$ denotes the unit roundoff, we introduce and analyze two reorthogonalized Pythagorean BCGS variants. These variants feature favorable communication properties, with asymptotically two synchronization points per block column, as well as an improved $O(\varepsilon)$ bound on the loss of orthogonality.  Our bounds are derived in a general fashion to additionally allow for the analysis of mixed-precision variants.  We verify our theoretical results with a panel of test matrices and experiments from a new version of the \texttt{BlockStab} toolbox.
}

\novelty{
	Bounds on the loss of orthogonality are proven for two variants of reorthogonalized block classical Gram-Schmidt, including new variants with asymptotically two synchronization points per block vector.  These bounds are important for the design of scalable, iterative solvers in high-performance computing.  We also examine mixed-precision variants of these methods.
}

\maketitle

\section{Introduction} \label{sec:intro}
Interest in low-synchronization variants of the Gram-Schmidt method has been proliferating recently \cite{BieLTetal22, CarLR21, CarLRetal22, Lun23, OktC23, ThoCRetal23, YamHBetal24, YamTHetal20, Zou23}.  These methods are part of the more general trend of developing communication-reducing orthogonalization routines with the goal of reducing memory movement between levels of the memory hierarchy or nodes on a network, thereby improving scalability in high-performance, and especially exascale, computing \cite{BalCDetal14, Car15, Hoe10}.  In this manuscript, we concentrate on block Gram-Schmidt (BGS) methods, and in particular, on reorthogonalized versions of block classical Gram-Schmidt with Pythagorean inner product (\BCGSPIP) from \cite{CarLR21}, and how they can achieve loss of orthogonality on the order of unit roundoff with only two synchronization points per block of columns.  In related work \cite{CarLMetal24b}, we consider a generalization of BCGS2-type algorithms \cite{Bar24, BarS13}, which have four such synchronization points per block, and we use this generalization to study the stability of a reorthogonalized BGS variant with only one synchronization point per block.

We define a \emph{block vector} $\vX \in \spR^{m \times s}$ with $m \gg s$ as a concatenation of $s$ column vectors, i.e., a tall-skinny matrix.  We are interested in computing an economic QR decomposition for the concatenation of $p$ block vectors
\[
    \bXX = \bmat{ \vX_1 & \vX_2 & \cdots \vX_p } \in \spR^{m \times ps}.
\]
We achieve this via a BGS method that takes $\bXX$ and a block size $s$ as arguments and returns an orthonormal basis $\bQQ \in \spR^{m \times ps}$ along with an upper triangular $\RR \in \spR^{ps \times ps}$ such that $\bXX = \bQQ \RR$.  Both $\bQQ$ and $\RR$ are computed block-wise, meaning that $s$ new columns of $\bQQ$ are generated per iteration, as opposed to just one column at a time.

Blocking or batching data is a known technique for reducing the total number of synchronization points, or \emph{sync points}.  In a distributed setting, we define a sync point as an operation requiring all nodes to send and receive information to and from one other, such as an \texttt{MPI\_Allreduce}.  In an orthogonalization procedure like BGS, block inner products and \emph{intraorthogonalization} routines (which orthogonalize vectors within a block column) like tall-skinny QR require sync points when block vectors are distributed row-wise across nodes.  For the purposes of this manuscript, we will assume that a block inner product $\vX^* \vY$ and an intraorthogonalization \IOnoarg each constitute one sync point.

In addition to reducing sync points, we are also concerned with the stability of BGS, which we measure here in terms of the \emph{loss of orthogonality (LOO)},
\begin{equation} \label{eq:loo}
    \norm{I - \bQQbar^T \bQQbar},
\end{equation}
where $I$ is the $ps \times ps$ identity matrix and $\bQQbar \in \spR^{m \times ps}$ denotes the $\bQQ$ factor computed in floating-point arithmetic.  We take $\norm{\cdot}$ as the induced matrix 2-norm.  As is common in the relevant literature, we regard a rectangular matrix $\bQQ \in \spR^{m \times ps}$ as \emph{orthogonal} when $I - \bQQ^T \bQQ = 0$, meaning not only are the columns of $\bQQ$ orthogonal to one another but also each column has norm one.  We regard the term \emph{orthonormal} as synonymous with \emph{orthogonal}.

Orthogonality is important for a number of downstream purposes, especially eigenvalue approximation; see, e.g., \cite{GolV13, TreB97} and sources therein.  Furthermore, having LOO close to working precision simplifies the backward error analysis of Krylov subspace methods like GMRES; see, e.g., the modular backward stability framework by Buttari et al.~\cite{ButHMetal24}.  Orthogonality is also stronger than being well-conditioned, i.e., having $\kappa(\bQQbar) \approx 1$, where $\kappa$ denotes the 2-condition number of a matrix, i.e., the ratio between its largest and smallest singular values.  Indeed, Gram-Schmidt methods frequently lose orthogonality while producing well-conditioned bases.  For all these reasons, we concentrate on LOO as the primary metric of a method's stability.

We will also consider the standard residual
\begin{equation} \label{eq:res}
	\norm{\bQQ \RR - \bXX},
\end{equation}
as well as the Cholesky residual,
\begin{equation} \label{eq:chol_res}
    \norm{\bXX^T \bXX - \RRbar^T \RRbar},
\end{equation}
where $\RRbar$ is the finite precision counterpart of $\RR$.  This latter residual measures how close a BGS method is to correctly computing a Cholesky decomposition of $\bXX^T \bXX$, which can provide insight into the stability pitfalls of a method; see, e.g., \cite{CarLR21, GirLRetal05, SmoBL06}.

In the following section, we summarize \BCGSPIP and results from \cite{CarLR21} and prove stability bounds on two reorthogonalized variants, \BCGSPIPRO and \BCGSPIPIRO, which have $2p$ and $2p-1$ sync points, respectively, or roughly 2 sync points per block vector.  Section~\ref{sec:mp} deals with mixed-precision variants of the new reorthogonalized methods, and Section~\ref{sec:experiments} demonstrates the numerical behavior of all methods using the \texttt{BlockStab} toolbox.  We summarize conclusions and future perspectives in Section~\ref{sec:conclusions}.

A few remarks regarding notation are necessary before proceeding.  Generally, uppercase Roman letters ($R_{ij}, S_{ij}, T_{ij}$) denote $s \times s$ block entries of a $ps \times ps$ matrix, which itself is usually denoted by uppercase Roman script ($\RR, \SS, \TT$).  A block column of such matrices is denoted with MATLAB indexing:
\[
    \RR_{1:k-1,k} = \bmat{
        R_{1,k} \\ R_{2,k} \\ \vdots \\ R_{k-1,k}
    }.
\]
For simplicity, we also abbreviate standard $ks \times ks$ submatrices as $\RR_k := \RR_{1:k,1:k}$.

Bold uppercase Roman letters ($\vQ_k$, $\vX_k$, $\vU_k$) denote $m \times s$ block vectors, and bold, uppercase Roman script ($\bQQ, \bXX, \bUU$) denotes an indexed concatenation of $p$ such vectors.  Standard $m \times ks$ submatrices are abbreviated as
\[
    \bQQ_k := \bQQ_{1:k} =
    \bmat{
        \vQ_1 & \vQ_2 & \cdots & \vQ_k
    }.
\]
Note that when $\vX$ is used on its own throughout the text, it denotes a generic block vector.  When a subscript is added to $\vX$ as $\vX_k$, we are referring to the specific $k$th block vector of the input matrix $\bXX$.  Furthermore, $\bXX_k$ is the concatenation of the first $k$ block vectors of $\bXX$, as defined above.

The function $[\vQ, R] = \IO{\vX}$ denotes an \emph{intraorthogonalization} routine, i.e., a method used to orthogonalize vectors within a generic block vector $\vX$. This can be any number of methods, including Householder, classical Gram-Schmidt, modified Gram-Schmidt, or Cholesky QR.

We use $\eps$ to denote the \emph{unit roundoff} of a chosen \emph{working precision}.  For example, if we use IEEE double precision arithmetic, then $\eps = 2^{-53} \approx 1.1\cdot10^{-16}$, and for IEEE single precision, $\eps = 2^{-24} \approx 6.0\cdot 10^{-8}$.  Throughout the text, we use standard results from \cite{Hig02} (particularly Sections 2.2 and 3.5) for rounding-error analysis.

We also make use of big-O notation like $\bigO{\eps}$, which is essentially $c(m, ps) \eps$,  with $c(m, ps)$ denoting a low-degree polynomial in dimensional constants $m$ and $ps$. To enhance clarity, we employ $\bigO{\eps}$ to disregard the dimensional factor $c(m, ps)$, although this factor can become significant when $m$ and $ps$ are large.
\section{Stability of reorthogonalized variants of \texttt{BCGS-PIP}} \label{sec:pip_variants}
\BCGSPIP (Algorithm~\ref{alg:PIP}) is a corrected version of Block Classical Gram-Schmidt (\BCGS), where the block diagonal entries of the $\RR$ factor are computed via the block Pythagorean theorem \cite{CarLR21}.  This correction stabilizes the algorithm by keeping the relative Cholesky residual \eqref{eq:chol_res} close to the working precision.  However, the overall LOO for \BCGSPIP can be quite high, and the  bound 
\[
    \norm{I - \bQQbar^T \bQQbar} \leq \bigO{\eps} \kappa^2(\bXX)
\]
only holds when $\bigO{\eps} \kappa^2(\bXX) \leq 1$.

Thus as $\kappa(\bXX) \to \eps^\ihalf$, any guarantees on the orthogonality of $\bQQbar$ are lost. 

\begin{algorithm}[htbp!]
	\caption{$[\bQQ, \RR] = \BCGSPIP(\bXX, \IOnoarg)$ \label{alg:PIP}}
	\begin{algorithmic}[1]
		\State{$[\vQ_1, R_{11}] = \IO{\vX_1}$}
		\For{$k = 2,\ldots,p$}
		    \State{$\bmat{\RR_{1:k-1,k} \\ P_k} = \bmat{\bQQ_{k-1} & \vX_k}^T \vX_k$} \label{line:BCGSPIP:innerprod}
		    \State{$R_{kk} = \chol\bigl( P_k - \RR_{1:k-1,k}^T \RR_{1:k-1,k} \bigr)$} \label{line:BCGSPIP:chol}
		    \State{$\vV_k = \vX_k - \bQQ_{k-1} \RR_{1:k-1,k}$} \label{line:BCGSPIP:Vk}
		    \State{$\vQ_k = \vV_k R_{kk}^\inv$} \label{line:BCGSPIP:tri}
		\EndFor
		\State \Return{$\bQQ = [\vQ_1, \ldots, \vQ_p]$, $\RR = (R_{ij})$}
	\end{algorithmic}
\end{algorithm}

A standard strategy for improving $\bQQbar$ is running the Gram-Schmidt procedure twice; see, e.g., \cite{BarS13, Ste08} for analysis of reorthogonalized block variants of \BCGS.  We do not consider \BCGS further in this manuscript; for a detailed analysis and provable bounds on its LOO, see \cite{CarLMetal24b}, which also analyzes low-sync reorthogonalized versions of \BCGS.  In the following two subsections, we propose two reorthogonalized versions of \BCGSPIP and prove bounds for their LOO and residuals.

\subsection{\texttt{BCGS-PIP+}} \label{sec:bcgs_pip_ro}
We first consider the simple approach of running \BCGSPIP twice in a row.  See Algorithm~\ref{alg:PIP+} for what we call \BCGSPIPRO, where $+$ stands for ``reorthogonalization."  Note that despite how the pseudocode is written, it is not necessary to store two bases in practice: $\bUU$ can be stored in place of $\bXX$ throughout the first step.  Similarly, it is possible to build $\RR$ by replacing $\SS$ gradually in the second step, and there is no need to construct $\TT$ explicitly; cf.\ the second phase of Algorithm~\ref{alg:PIPI+}.  The pseudocode is written in such a way as to simplify the mathematical analysis.

\begin{algorithm}[htbp!]
    \caption{$[\bQQ, \RR] = \BCGSPIPRO(\bXX, \IOnoarg)$ \label{alg:PIP+}}
    \begin{algorithmic}[1]
        \State{$[\bUU, \SS] = \BCGSPIP(\bXX, \IOnoarg)$} \label{line:PIP+:US}
        \State{$[\bQQ, \TT] = \BCGSPIP(\bUU, \IOnoarg)$} \label{line:PIP+:QT}
        \State{$\RR = \TT \SS;$} \label{line:PIP+:TS}
        \State \Return{$\bQQ = [\vQ_1, \ldots, \vQ_p]$, $\RR = (R_{ij})$}
    \end{algorithmic}
\end{algorithm}

Under minimal assumptions, proving an $\bigO{\eps}$ LOO bound for Algorithm~\ref{alg:PIP+} follows directly from the results of \cite{CarLR21}. In particular, we can obtain the following result by applying \cite[Theorem~3.4]{CarLR21} twice.  Note the condition $\kappa(\vX) \leq \kappa(\bXX)$ required for $\IO{\vX}$: when $\vX_k$ is a block vector of $\bXX$, this condition holds by \cite[Corollary~8.6.3]{GolV13}.  Here and elsewhere, it is meant to ensure that $\IOnoarg$ does not handle block vectors with worse conditioning than that of $\bXX$.

\begin{corollary} \label{cor:PIP+:LOO}
    Let $\bXX \in \spR^{m \times ps}$ with $\bigO{\eps} \kappa^2(\bXX) \leq \frac{1}{2}$ and $\bQQbar$ and $\RRbar$ be computed by Algorithm~\ref{alg:PIP+}. Assuming that for all $\vX \in \spR^{m \times s}$ with $\kappa(\vX) \leq \kappa(\bXX)$, $[\vQbar, \Rbar] = \IO{\vX}$ satisfy
    \begin{align*}
        \Rbar^T \Rbar = \vX^T \vX + \DeltaE,   &\quad \norm{\DeltaE} \leq \bigO{\eps} \norm{\vX}^2, \mbox{ and} \\
        \vQbar \Rbar = \vX + \DeltavD,          &\quad \norm{\DeltavD} \leq \bigO{\eps} \bigl( \norm{\vX} + \norm{\vQbar} \norm{\Rbar} \bigr),
    \end{align*}
    then $\bQQbar$ and $\RRbar$ satisfy
    \begin{align}       
        \norm{I - \bQQbar^T \bQQbar} \leq \bigO{\eps},  &\mbox{ and} \label{eq:cor:PIP+:LOO} \\
        \bQQbar \RRbar = \bXX + \DeltabDD,            &\quad \norm{\DeltabDD} \leq \bigO{\eps} \norm{\bXX}. \label{eq:cor:PIP+:res}
    \end{align}
\end{corollary}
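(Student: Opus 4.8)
Here is the proof plan I would follow.

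\medskip

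The plan is to invoke the finite-precision analysis of \BCGSPIP from \cite[Theorem~3.4]{CarLR21} twice --- once for the call on line~\ref{line:PIP+:US}, once for the call on line~\ref{line:PIP+:QT} --- and then track the rounding error of the triangular product on line~\ref{line:PIP+:TS}. First I would apply \cite[Theorem~3.4]{CarLR21} to line~\ref{line:PIP+:US}, whose input is $\bXX$: the hypothesis $\bigO{\eps}\kappa^2(\bXX)\le\tfrac12$ is (with a factor of two to spare) precisely the hypothesis needed there, and the assumed bounds on $[\vQbar,\Rbar]=\IO{\vX}$ for $\kappa(\vX)\le\kappa(\bXX)$ are exactly the $\IOnoarg$ hypotheses of that theorem. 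Hence
\[
    \norm{I - \bUUbar^T\bUUbar} \le \bigO{\eps}\,\kappa^2(\bXX) \le \tfrac12,
    \qquad
    \bUUbar\,\SSbar = \bXX + \DeltavD_1, \quad \norm{\DeltavD_1}\le\bigO{\eps}\,\norm{\bXX}.
\]
The bound $\norm{I - \bUUbar^T\bUUbar}\le\tfrac12$ confines the singular values of $\bUUbar$ to $[1/\sqrt2,\sqrt{3/2}]$, so $\kappa(\bUUbar)\le\sqrt3$, $\norm{\bUUbar}\le\sqrt{3/2}$, $\norm{\bUUbar^{\pinv}}\le\sqrt2$, and therefore $\norm{\SSbar}=\norm{\bUUbar^{\pinv}(\bXX+\DeltavD_1)}\le\bigO{1}\norm{\bXX}$.

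Next I would apply \cite[Theorem~3.4]{CarLR21} to line~\ref{line:PIP+:QT}, whose input is the computed matrix $\bUUbar$. Because $\kappa(\bUUbar)\le\sqrt3$ is an absolute constant, the hypothesis $\bigO{\eps}\kappa^2(\bUUbar)\le\tfrac12$ holds for any reasonable working precision, and (see below) the $\IOnoarg$ hypotheses still apply to the block vectors of $\bUUbar$. The theorem then gives
\[
    \norm{I - \bQQbar^T\bQQbar} \le \bigO{\eps}\,\kappa^2(\bUUbar) \le \bigO{\eps},
    \qquad
    \bQQbar\,\TTbar = \bUUbar + \DeltavD_2, \quad \norm{\DeltavD_2}\le\bigO{\eps}\,\norm{\bUUbar}\le\bigO{\eps}.
\]
The first inequality is exactly \eqref{eq:cor:PIP+:LOO}; from it I also read off $\norm{\bQQbar}\le\sqrt2$, $\norm{\bQQbar^{\pinv}}\le\sqrt2$, and hence $\norm{\TTbar}=\norm{\bQQbar^{\pinv}(\bUUbar+\DeltavD_2)}\le\bigO{1}$.

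For \eqref{eq:cor:PIP+:res} I would handle line~\ref{line:PIP+:TS}: the floating-point product $\TTbar\SSbar$ yields a computed $\RRbar = \TTbar\SSbar + \DeltaF$ with $\norm{\DeltaF}\le\bigO{\eps}\norm{\TTbar}\norm{\SSbar}$ by the standard matrix-multiplication error bound. Substituting this together with $\bQQbar\TTbar = \bUUbar + \DeltavD_2$ and $\bUUbar\SSbar = \bXX + \DeltavD_1$,
\[
    \bQQbar\,\RRbar = (\bUUbar+\DeltavD_2)\,\SSbar + \bQQbar\,\DeltaF = \bXX + \DeltavD_1 + \DeltavD_2\,\SSbar + \bQQbar\,\DeltaF.
\]
Setting $\DeltabDD := \DeltavD_1 + \DeltavD_2\,\SSbar + \bQQbar\,\DeltaF$, the three summands are $\bigO{\eps}\norm{\bXX}$: directly for $\DeltavD_1$; via $\norm{\DeltavD_2}\norm{\SSbar}\le\bigO{\eps}\cdot\bigO{1}\norm{\bXX}$ for the second; and via $\norm{\bQQbar}\norm{\DeltaF}\le\bigO{1}\cdot\bigO{\eps}\norm{\TTbar}\norm{\SSbar}\le\bigO{\eps}\norm{\bXX}$ for the third. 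Summing yields \eqref{eq:cor:PIP+:res}.

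The only delicate point --- and the step I would be most careful with --- is the hand-off between the two applications: one must check that the \emph{computed} factor $\bUUbar$ is sufficiently well-conditioned, of controlled norm, and has block vectors admissible to $\IOnoarg$, so that \cite[Theorem~3.4]{CarLR21} is reusable. Conditioning and norms follow from the first-pass loss-of-orthogonality bound as above. Admissibility is the only genuine subtlety: the $\IOnoarg$ hypotheses are stated for $\kappa(\vX)\le\kappa(\bXX)$, whereas the block vectors of $\bUUbar$ only satisfy $\kappa\le\kappa(\bUUbar)\le\sqrt3$ by \cite[Corollary~8.6.3]{GolV13}; this is resolved by the harmless observation that one may assume $\kappa(\bXX)\ge\sqrt3$ (otherwise $\bXX$ is already essentially orthonormal). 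Everything else is routine bookkeeping built on the single-pass estimates.
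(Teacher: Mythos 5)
Your proposal is correct and takes essentially the same route as the paper: the paper's proof of this corollary is precisely to apply \cite[Theorem~3.4]{CarLR21} to each of the two \BCGSPIP calls and absorb the rounding error of the product $\TT\SS$, and the norm/condition-number bookkeeping you carry out for $\bUUbar$, $\SSbar$, $\TTbar$ is the same argument the paper records in generalized form in Theorem~\ref{thm:PIP+}. Your extra remark about the admissibility of the block vectors of $\bUUbar$ to \IOnoarg (via $\kappa(\bUUbar)\leq\sqrt{3}$) addresses a point the paper leaves implicit and is a harmless, sensible patch.
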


We can obtain a further result on the Cholesky residual of \BCGSPIPRO via the following corollary, which follows by applying \cite[Theorem~3.2]{CarLR21} twice.

\begin{corollary} \label{cor:PIP+:chol}
     Let $\bXX \in \spR^{m \times ps}$ with $\bigO{\eps} \kappa^2(\bXX) \leq \frac{1}{2}$ and $\bQQbar$ and $\RRbar$ be computed by Algorithm~\ref{alg:PIP+}. Assuming that for all $\vX \in \spR^{m \times s}$ with $\kappa(\vX) \leq \kappa(\bXX)$, $[\vQbar, \Rbar] = \IO{\vX}$ satisfy
    \begin{align*}
        \Rbar^T \Rbar = \vX^T \vX + \DeltaE,   &\quad \norm{\DeltaE} \leq \bigO{\eps} \norm{\vX}^2, \mbox{ and} \\
        \vQbar \Rbar = \vX + \DeltavD,          &\quad \norm{\DeltavD} \leq \bigO{\eps} \bigl( \norm{\vX} + \norm{\vQbar} \norm{\Rbar} \bigr),
    \end{align*}
    then $\RRbar$ satisfies
    \begin{equation} \label{eq:cor:PIP+:chol_res}
        \RRbar^T \RRbar = \bXX^T \bXX + \DeltaEE, \quad \norm{\DeltaEE} \leq \bigO{\eps} \norm{\bXX}^2.
    \end{equation}
\end{corollary}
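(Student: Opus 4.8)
The plan is to treat the two \BCGSPIP sweeps in Algorithm~\ref{alg:PIP+} as black boxes governed by the single-sweep bounds of \cite{CarLR21}, and then to add in the rounding error incurred when $\RRbar$ is formed from $\TTbar$ and $\SSbar$ on line~\ref{line:PIP+:TS}. First I would apply the \BCGSPIP analysis to the first sweep $[\bUUbar, \SSbar] = \BCGSPIP(\bXX, \IOnoarg)$: the Cholesky-residual bound \cite[Theorem~3.2]{CarLR21} gives $\SSbar^T\SSbar = \bXX^T\bXX + \DeltaEE_1$ with $\norm{\DeltaEE_1} \leq \bigO{\eps}\norm{\bXX}^2$ (hence $\norm{\SSbar} \leq \sqrt{2}\norm{\bXX}$), while \cite[Theorem~3.4]{CarLR21} gives the standard residual $\bUUbar\SSbar = \bXX + \DeltabDD_1$ with $\norm{\DeltabDD_1} \leq \bigO{\eps}\norm{\bXX}$ and the loss of orthogonality $\norm{I - \bUUbar^T\bUUbar} \leq \bigO{\eps}\kappa^2(\bXX) \leq \frac12$. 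The last estimate is what certifies $\bUUbar$ as an admissible input for the second sweep: it forces $\norm{\bUUbar} \leq \sqrt{3/2}$ and $\sigmin(\bUUbar) \geq 1/\sqrt{2}$, so $\kappa(\bUUbar) \leq \sqrt{3}$ and $\bigO{\eps}\kappa^2(\bUUbar) \ll \frac12$, and (as in Corollary~\ref{cor:PIP+:LOO}) the \IOnoarg hypotheses carry over to the nearly orthonormal block vectors of $\bUUbar$. Applying \cite[Theorem~3.2]{CarLR21} to the second sweep $[\bQQbar, \TTbar] = \BCGSPIP(\bUUbar, \IOnoarg)$ then yields $\TTbar^T\TTbar = \bUUbar^T\bUUbar + \DeltaEE_2$ with $\norm{\DeltaEE_2} \leq \bigO{\eps}\norm{\bUUbar}^2 = \bigO{\eps}$, and in particular $\norm{\TTbar} \leq 2$.

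With these ingredients in hand, the conclusion follows by a short chain of substitutions. The computed $\RRbar$ on line~\ref{line:PIP+:TS} satisfies $\RRbar = \TTbar\SSbar + \DeltaRR$ with $\norm{\DeltaRR} \leq \bigO{\eps}\norm{\TTbar}\norm{\SSbar} \leq \bigO{\eps}\norm{\bXX}$ by the standard floating-point matrix-multiplication bound \cite[\S 3.5]{Hig02}. Expanding $\RRbar^T\RRbar$ and discarding the $\DeltaRR$ cross-terms (of norm at most $2\norm{\TTbar}\norm{\SSbar}\norm{\DeltaRR} + \norm{\DeltaRR}^2 = \bigO{\eps}\norm{\bXX}^2$) gives $\RRbar^T\RRbar = \SSbar^T\TTbar^T\TTbar\SSbar + \bigO{\eps}\norm{\bXX}^2$; substituting $\TTbar^T\TTbar = \bUUbar^T\bUUbar + \DeltaEE_2$ and using $\norm{\SSbar^T\DeltaEE_2\SSbar} \leq \norm{\SSbar}^2\norm{\DeltaEE_2} = \bigO{\eps}\norm{\bXX}^2$ gives $(\bUUbar\SSbar)^T(\bUUbar\SSbar) + \bigO{\eps}\norm{\bXX}^2$; and finally substituting $\bUUbar\SSbar = \bXX + \DeltabDD_1$ gives $\bXX^T\bXX + \bXX^T\DeltabDD_1 + \DeltabDD_1^T\bXX + \DeltabDD_1^T\DeltabDD_1 + \bigO{\eps}\norm{\bXX}^2 = \bXX^T\bXX + \bigO{\eps}\norm{\bXX}^2$. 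Collecting all the error terms into one matrix $\DeltaEE$ gives \eqref{eq:cor:PIP+:chol_res}.

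The one genuinely load-bearing step — and the reason the step-1 Cholesky residual by itself is not enough — is the use of the step-1 \emph{standard residual} $\bUUbar\SSbar = \bXX + \DeltabDD_1$, which carries no $\kappa(\bXX)$ factor. If one instead wrote $(\bUUbar\SSbar)^T(\bUUbar\SSbar) = \SSbar^T\SSbar + \SSbar^T(\bUUbar^T\bUUbar - I)\SSbar$ and bounded the correction by $\norm{\SSbar}^2\norm{I - \bUUbar^T\bUUbar}$, one would only reach $\bigO{\eps}\kappa^2(\bXX)\norm{\bXX}^2$, since after a single sweep $\bUUbar$ is orthonormal only to $O(\eps)\kappa^2(\bXX)$ accuracy. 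Conceptually, the Cholesky residual is \emph{already} clean after the first sweep; the second sweep — whose net effect on the $\RR$ factor is, up to $\bigO{\eps}$, the near-orthogonal change of variables $\TTbar$ — merely preserves it, even though it is the second sweep that is responsible for repairing the orthogonality of $\bQQbar$. Everything else is routine bookkeeping: bounding $\norm{\SSbar}$, $\norm{\TTbar}$, and $\sigmin(\bUUbar)$ by modest constants (times $\norm{\bXX}$ where appropriate), and verifying that $\bUUbar$ meets the hypotheses needed to invoke \cite[Theorem~3.2]{CarLR21} a second time.
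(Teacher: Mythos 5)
Your proof is correct and follows essentially the same route as the paper, whose entire proof is the one-line remark that the result ``follows by applying \cite[Theorem~3.2]{CarLR21} twice'': you simply spell out that chain---first-sweep Cholesky and standard residuals, conditioning of $\bUUbar$ to justify the second sweep, second-sweep Cholesky residual, and the $\RR = \TT\SS$ rounding error---with the bookkeeping the paper leaves implicit. Your observation that the first-sweep standard residual (rather than its loss of orthogonality, which would cost a factor $\kappa^2(\bXX)$) is the load-bearing ingredient is exactly the right reading of why the cited result suffices.
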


For our mixed-precision analysis, it will also be useful to have a generalized formulation of these bounds that do not rely on a specific precision and that also reveal all the sources of error. From standard rounding-error principles \cite{Hig02}, we can write the following bounds for each step of Algorithm~\ref{alg:PIP+} for constants $\deltaUS, \omegaU, \deltaQT, \omegaQ, \deltaTS \in (0,1)$:
\begin{align}
    &\bUUbar \SSbar = \bXX + \DeltabDDUS,
    \quad \norm{\DeltabDDUS} \leq \deltaUS \norm{\bXX}; \label{eq:PIP+:res:US} \\
    &\norm{I - \bUUbar^T \bUUbar} \leq \omegaU \kappa^2(\bXX);
    \label{eq:PIP+:LOO:U} \\
    &\bQQbar \TTbar = \bUUbar + \DeltabDDQT,
    \quad \norm{\DeltabDDQT} \leq \deltaQT \norm{\bUUbar}; \label{eq:PIP+:res:QT} \\
    &\norm{I - \bQQbar^T \bQQbar} \leq \omegaQ \kappa^2(\bUUbar); \mbox{ and}
    \label{eq:PIP+:LOO:Q} \\
    &\RRbar = \TTbar \SSbar + \DeltaDDTS,
    \quad \norm{\DeltaDDTS} \leq \deltaTS \norm{\TTbar} \norm{\SSbar}. \label{eq:PIP+:R=TS}
\end{align}

The following theorem summarizes how each step of Algorithm~\ref{alg:PIP+} influences the LOO and residual per iteration, which will be useful in Section~\ref{sec:mp} when we consider a two-precision variant.  We omit dependencies on $k$ from the $\delta$ constants, meaning each can be regarded as a maximum over all constants stemming from the same step of the algorithm.  We also generally assume that such constants are much smaller than $1$ and drop ``quadratic" terms resulting from their products.

\begin{theorem} \label{thm:PIP+}
    Let $\bXX \in \spR^{m \times ps}$ and suppose $[\bQQbar, \RRbar] = \BCGSPIPRO(\bXX, \IOnoarg)$. Assuming that \eqref{eq:PIP+:res:US}--\eqref{eq:PIP+:R=TS} are satisfied with $\omegaU \kappa^2(\bXX) \leq \frac{1}{2}$ and $\omegaQ \leq \frac{1}{6}$, then $\bQQbar$ and $\RRbar$ satisfy 
    \begin{align}       
        \norm{I - \bQQbar^T \bQQbar} \leq 3 \cdot \omegaQ,
        &\mbox{ and} \label{eq:thm:PIP+:LOO} \\
        \bQQbar \RRbar = \bXX + \DeltabDD,
        &\quad \norm{\DeltabDD} \leq \bigl(\deltaUS + \sqrt{6} \cdot \deltaQT + 9 \cdot \deltaTS\bigr) \norm{\bXX}. \label{eq:thm:PIP+:res}
    \end{align}
\end{theorem}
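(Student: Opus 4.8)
The plan is to bootstrap from the generalized step-wise bounds \eqref{eq:PIP+:res:US}--\eqref{eq:PIP+:R=TS}, treating the two \BCGSPIP sweeps as black boxes whose input/output relations are exactly those inequalities. The key quantity to control first is $\kappa(\bUUbar)$, since the second-sweep LOO bound \eqref{eq:PIP+:LOO:Q} involves $\kappa^2(\bUUbar)$, and we need to show this is $O(1)$ so that $3\omegaQ$ is a meaningful bound. From \eqref{eq:PIP+:LOO:U} and $\omegaU\kappa^2(\bXX)\le\tfrac12$ we get $\norm{I-\bUUbar^T\bUUbar}\le\tfrac12$, so that $\sigma_{\min}^2(\bUUbar)\ge\tfrac12$ and $\sigma_{\max}^2(\bUUbar)\le\tfrac32$, hence $\kappa^2(\bUUbar)\le 3$. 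Plugging into \eqref{eq:PIP+:LOO:Q} gives $\norm{I-\bQQbar^T\bQQbar}\le 3\omegaQ$, which is \eqref{eq:thm:PIP+:LOO}; the hypothesis $\omegaQ\le\tfrac16$ just ensures this is $\le\tfrac12$ and keeps $\bQQbar$ safely of full rank for the subsequent norm estimates.

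For the residual \eqref{eq:thm:PIP+:res} I would chain the three perturbed identities. Starting from $\RRbar=\TTbar\SSbar+\DeltaDDTS$, write $\bQQbar\RRbar=\bQQbar\TTbar\SSbar+\bQQbar\DeltaDDTS$, then substitute $\bQQbar\TTbar=\bUUbar+\DeltabDDQT$ to get $\bQQbar\RRbar=\bUUbar\SSbar+\DeltabDDQT\SSbar+\bQQbar\DeltaDDTS$, and finally substitute $\bUUbar\SSbar=\bXX+\DeltabDDUS$. This yields $\DeltabDD=\DeltabDDUS+\DeltabDDQT\SSbar+\bQQbar\DeltaDDTS$, so I need norm bounds on $\SSbar$, $\bUUbar$, $\bQQbar$, $\TTbar$ in terms of $\norm{\bXX}$. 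From $\kappa^2(\bUUbar)\le 3$ and $\sigma_{\max}^2(\bUUbar)\le\tfrac32$ we get $\norm{\bUUbar}\le\sqrt{3/2}$; from \eqref{eq:PIP+:res:US}, $\norm{\bUUbar\SSbar}\le(1+\deltaUS)\norm{\bXX}$, and since $\sigma_{\min}(\bUUbar)\ge 1/\sqrt2$ this gives $\norm{\SSbar}\le\sqrt2(1+\deltaUS)\norm{\bXX}$; similarly $\norm{I-\bQQbar^T\bQQbar}\le 3\omegaQ\le\tfrac12$ gives $\norm{\bQQbar}\le\sqrt{3/2}$, and then $\norm{\TTbar}\le\sqrt2(1+\deltaQT)\norm{\bUUbar}$. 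Collecting, $\norm{\DeltabDDQT\SSbar}\le\deltaQT\norm{\bUUbar}\cdot\sqrt2(1+\deltaUS)\norm{\bXX}$ and $\norm{\bQQbar\DeltaDDTS}\le\sqrt{3/2}\cdot\deltaTS\norm{\TTbar}\norm{\SSbar}$; feeding in the norm bounds for $\norm{\TTbar}$ and $\norm{\SSbar}$ and dropping products of $\delta$'s gives coefficients that round up to $\sqrt6\deltaQT$ and $9\deltaTS$ respectively, together with the $\deltaUS$ from the first term, matching \eqref{eq:thm:PIP+:res}.

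The main obstacle is bookkeeping the constants so they collapse to exactly $\sqrt6$ and $9$: this requires being slightly careful about which $\sigma_{\min}/\sigma_{\max}$ bounds are used where (the $\tfrac12$ and $\tfrac16$ thresholds in the hypotheses are precisely what make $\sqrt{3/2}\cdot\sqrt2=\sqrt3$ and $\sqrt2\cdot\sqrt2=2$ type products come out clean), and about consistently discarding the $O(\delta^2)$ cross terms as the theorem statement licenses. A minor subtlety worth spelling out is the justification that $\kappa(\bUUbar)\le\kappa(\bXX)$-type assumptions needed to invoke the second \BCGSPIP sweep's guarantees are compatible with $\kappa^2(\bUUbar)\le 3$ — but since here we are simply assuming \eqref{eq:PIP+:LOO:Q} holds as a hypothesis rather than re-deriving it, this is automatic. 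Everything else is routine triangle-inequality and singular-value estimates.
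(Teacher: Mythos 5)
Your proposal is correct and follows essentially the same route as the paper's proof: the same $\kappa^2(\bUUbar)\le 3$ argument (via $\norm{I-\bUUbar^T\bUUbar}\le\tfrac12$) for \eqref{eq:thm:PIP+:LOO}, and the same chained decomposition $\DeltabDD=\DeltabDDUS+\DeltabDDQT\SSbar+\bQQbar\DeltaDDTS$ for \eqref{eq:thm:PIP+:res}. The only (harmless) deviation is that you bound $\norm{\SSbar}$ and $\norm{\TTbar}$ by dividing by $\sigmin(\bUUbar)$ and $\sigmin(\bQQbar)$ instead of using the Gram-matrix identities the paper employs, which gives slightly tighter first-order constants (roughly $\sqrt{3}$ and $3$) that still sit below the stated $\sqrt{6}$ and $9$.
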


\begin{proof}
    From~\eqref{eq:PIP+:LOO:U} and the assumption $\omegaU \kappa^2(\bXX) \leq \frac{1}{2}$, we obtain
    \begin{equation} \label{eq:thm:PIP+:normU}
        \norm{\bUUbar}^2 = \norm{\bUUbar^T \bUUbar}
        \leq \norm{I - \bUUbar^T\bUUbar} + \norm{I}
        \leq \frac{3}{2},
    \end{equation}
    and by the perturbation theory of singular values~\cite[Corollary~8.6.2]{GolV13}, we obtain a lower bound on $\sigmin^2(\bUUbar)$, namely, $\sigmin^2(\bUUbar) \geq \frac{1}{2}$. Consequently, we have the following for $\kappa^2(\bUUbar)$:
    \begin{equation} \label{eq:thm:PIP+:kappaU}
        \kappa^2(\bUUbar) = \dfrac{\sigmax^2(\bUU)}{\sigmin^2(\bUU)} \leq 3.
    \end{equation}
    Applying \eqref{eq:thm:PIP+:kappaU} to \eqref{eq:PIP+:LOO:Q} immediately confirms \eqref{eq:thm:PIP+:LOO}.
    
    To prove \eqref{eq:thm:PIP+:res}, we combine \eqref{eq:PIP+:res:US}, \eqref{eq:PIP+:res:QT}, and \eqref{eq:PIP+:R=TS} to arrive at
    \begin{equation} \label{eq:thm:PIP+:QR}
        \bQQbar \RRbar
        = \bXX + \underbrace{\DeltabDDUS + \DeltabDDQT \SSbar + \bQQbar \DeltaDDTS}_{=: \DeltabDD}.
    \end{equation}
    Note that $\norm{\bQQbar} \leq \frac{\sqrt{6}}{2}$ is derived similarly to~\eqref{eq:thm:PIP+:normU}, which together yield
    \begin{equation} \label{eq:thm:PIP+:res_bound}
        \begin{split} 
            \norm{\DeltabDD}
            & \leq \norm{\DeltabDDUS} + \norm{\DeltabDDQT} \norm{\SSbar} + \norm{\bQQ} \norm{\DeltaDDTS} \\
            & \leq \deltaUS \norm{\bXX} + \deltaQT \norm{\SSbar} + \frac{\sqrt{6}}{2} \cdot \deltaTS \norm{\TTbar} \norm{\SSbar}.
        \end{split}
    \end{equation}
    By~\eqref{eq:PIP+:res:US} and \eqref{eq:PIP+:res:QT}, we obtain
    \begin{align*}
        \SSbar^T \SSbar &= \bXX^T \bXX + \bXX^T \DeltabDDUS + \DeltabDDUS \bXX + \SSbar^T \bigl(I - \bUUbar^T \bUUbar\bigr) \SSbar,\\
        \TTbar^T \TTbar &=\bUUbar^T \bUUbar + \bUUbar^T \DeltabDDQT + \DeltabDDQT^T \bUUbar + \TTbar^T \bigl(I - \bQQbar^T \bQQbar\bigr) \TTbar.
    \end{align*}
    Furthermore, from~\eqref{eq:thm:PIP+:normU}, \eqref{eq:thm:PIP+:kappaU}, and the assumptions $\omegaU \kappa^2(\bXX) \leq \frac{1}{2}$ and $\omegaQ \leq \frac{1}{6}$, it follows that
    \begin{align}
        \norm{\SSbar} & \leq \sqrt{2 + 4 \cdot \deltaUS} \norm{\bXX} \leq \sqrt{6} \norm{\bXX}, \label{eq:thm:PIP+:normS} \\
        \norm{\TTbar} & \leq \sqrt{3 + 6 \cdot \deltaQT} \leq 3. \label{eq:thm:PIP+:normT}
    \end{align}
    Substituting \eqref{eq:thm:PIP+:normS} and \eqref{eq:thm:PIP+:normT} into \eqref{eq:thm:PIP+:res_bound} proves \eqref{eq:thm:PIP+:res}.
\end{proof}

\subsection{\texttt{BCGS-PIPI+}} \label{sec:bcgs_pipi_ro}
One drawback to Algorithm~\ref{alg:PIP+} is that two for-loops are required to reorthogonalize the entire basis.  In practice, we can combine the for-loops without introducing additional sync points.  The resulting algorithm is denoted  \BCGSPIPIRO, where \texttt{I+} stands for ``inner reorthogonalization", and is provided as Algorithm~\ref{alg:PIPI+}.  Note that, as in Algorithm~\ref{alg:PIP+}, we construct matrices $\SS$, $\TT$, and $\bUU$ throughout the algorithm, although none of these needs to be explicitly formed in practice.

Combining the for-loops and introducing a general \IOnoarg for the first orthogonalization step of Algorithm~\ref{alg:PIPI+} creates new challenges in proving its stability bounds, primarily because the first block vector is no longer reorthogonalized.  We can therefore no longer directly use the results from \cite{CarLR21}, as a stricter condition will have to be placed on the choice of \IOnoarg.  In the following, we will first develop a generalized approach depending on small constants stemming from particular sources of error, similar to Theorem~\ref{thm:PIP+}.  We then conclude the section with an application to the uniform-precision case by inducting over the generalized results.

\begin{algorithm}[htbp!]
    \caption{$[\bQQ, \RR] = \BCGSPIPIRO(\bXX, \IOnoarg)$ \label{alg:PIPI+}}
    \begin{algorithmic}[1]
        \State{$[\vQ_1, R_{11}] = \IO{\vX_1}$} \SComment{$S_{11} = R_{11}$, $\vU_1 = \vQ_1$, $T_{11} = I$}
        \For{$k = 2, \ldots, p$}
            \State{$\bmat{\SS_{1:k-1,k} \\ \Omega_k} = \bmat{\bQQ_{k-1} & \vX_k}^T \vX_k$} \SComment{First \BCGSPIP step} \label{line:PIPI+:vSk}
            \State{$S_{kk} = \chol\bigl( \Omega_k - \SS_{1:k-1,k}^T \SS_{1:k-1,k} \bigr)$}
            \State{$\vV_k = \vX_k - \bQQ_{k-1} \SS_{1:k-1,k}$} \label{line:PIPI+:Vk}
            \State{$\vU_k = \vV_k S_{kk}^\inv$} \label{line:PIPI+:Uk}
            \State{$\bmat{\TT_{1:k-1,k} \\ P_k} = \bmat{\bQQ_{k-1} & \vU_k}^T \vU_k$} \SComment{Second \BCGSPIP step}\label{line:PIPI+:vTk}
            \State{$T_{kk} = \chol\bigl( P_k - \TT_{1:k-1,k}^T \TT_{1:k-1,k} \bigr)$}
            \State{$\vW_k = \vU_k - \bQQ_{k-1} \TT_{1:k-1,k}$}
            \State{$\vQ_k = \vW_k T_{kk}^\inv$} \label{line:PIPI+:Qk}
            \State{$\RR_{1:k-1,k} = \SS_{1:k-1,k} + \TT_{1:k-1,k} S_{kk}$} \SComment{Finalize $\RR$ entries}
            \State{$R_{kk} = T_{kk} S_{kk}$} \label{line:PIPI+:Rkk}
        \EndFor
        \State \Return{$\bQQ = [\vQ_1, \ldots, \vQ_p]$, $\RR = (R_{ij})$}
    \end{algorithmic}
\end{algorithm}

For the following, we assume all quantities are computed by Algorithm~\ref{alg:PIPI+} and that at each iteration $k \in \{2, \ldots, p\}$, there exists $\omega_{k-1} \in (0,1)$ such that
\begin{equation} \label{eq:PIPI+:LOO:k-1}
    \norm{I - \bQQbar_{k-1}^T \bQQbar_{k-1}} \leq \omega_{k-1}.
\end{equation}
Then similarly to~\eqref{eq:thm:PIP+:normU},
\begin{equation} \label{eq:PIPI+:normQ}
    \norm{\bQQbar_{k-1}} \leq \sqrt{1 + \omega_{k-1}} \leq \sqrt{2}.
\end{equation}

We can write the following for intermediate quantities computed by \BCGSPIPIRO at each iteration $k \in \{2, \ldots, p\}$, where we summarize rounding-error bounds via constants $\delta_* \in (0,1)$:
\begin{align}
    \SSbar_{1:k-1,k}			&= \bQQbar_{k-1}^T \vX_k + \DeltavS_k,
    \quad \norm{\DeltavS_k}	\leq \deltaQX \norm{\vX_k}; \label{eq:PIPI+:vSk} \\
    \Ombar_k 					&= \vX_k^T \vX_k + \DeltaOmega_k,
    \quad \norm{\DeltaOmega_k} \leq \deltaXX \norm{\vX_k}^2; \label{eq:PIPI+:Omegak} \\
    \Sbar_{kk}^T\Sbar_{kk} 		&= \Ombar_k - \SSbar_{1:k-1,k}^T \SSbar_{1:k-1,k} + \DeltaFone_k + \DeltaCone_k, \label{eq:PIPI+:Skk_chol} \\
    \norm{\DeltaFone_k}			&\leq \deltaSS \norm{\vX_k}^2, \label{eq:PIPI+:Skk_chol_errbound}
    \quad \norm{\DeltaCone_k} \leq \deltacholone \norm{\vX_k}^2; \\
    \vVbar_k 					&= \vX_k - \bQQbar_{k-1} \SSbar_{1:k-1,k} + \DeltavV_k,
    \quad \norm{\DeltavV_k} \leq \deltaQS\norm{\vX_k};
    \label{eq:PIPI+:Vk} \\
    \vUbar_k \SSbar_{kk}        &= \vVbar_k + \DeltavGone_k,
    \quad \norm{\DeltavGone_k} \leq \deltaU \norm{\vUbar_k} \norm{\SSbar_{kk}} \label{eq:PIPI+:Uk}; \\
    \TTbar_{1:k-1,k}            &= \bQQbar_{k-1}^T \vUbar_k + \DeltavT_k,
    \quad \norm{\DeltavT_k} \leq \deltaQU \norm{\vUbar_k}; \label{eq:PIPI+:vTk} \\
    \Pbar_k                     &= \vUbar_k^T \vUbar_k + \DeltaP_k,
    \quad \norm{\DeltaP_k} \leq \deltaUU \norm{\vUbar_k}^2 \label{eq:PIPI+:Pk}; \\ 
    \Tbar_{kk}^T \Tbar_{kk}     &= \Pbar_k - \TTbar_{1:k-1,k}^T \TTbar_{1:k-1,k} + \DeltaFtwo_k + \DeltaCtwo_k \label{eq:PIPI+:Tkk_chol}, \\
    \norm{\DeltaFtwo_k}         &\leq \deltaTT \norm{\vUbar_k}^2
    \mbox{ and }
    \norm{\DeltaCtwo_k} \leq \deltacholtwo \norm{\vUbar_k}^2; \\
    \vWbar_k                    &= \vUbar_k - \bQQbar_{k-1} \TTbar_{1:k-1,k} + \DeltavW_k,
    \quad \norm{\DeltavW_k} \leq \deltaQT \norm{\vUbar_k}; \mbox{ and}
    \label{eq:PIPI+:Wk} \\
    \vQbar_k \Tbar_{kk}         &= \vWbar_k + \DeltavGtwo_k, \quad \norm{\DeltavGtwo_k} \leq \deltaQ \norm{\vQbar_k} \norm{\Tbar_{kk}}.
    \label{eq:PIPI+:Tkk}
\end{align}
We have applied \eqref{eq:PIPI+:normQ} and dropped quadratic error terms throughout the proofs in this section and in \eqref{eq:PIPI+:vSk}--\eqref{eq:PIPI+:Tkk} to simplify the expressions; namely, $\norm{\bUU_k}$ and $\norm{\bQQ_k}$ are baked into the constants, where $\bUU_k = \bmat{\vU_1 & \vU_2 & \cdots & \vU_k}$. We also again omit explicit dependence on $k$ for readability.  Note as well that each $\DeltaF_k^{(i)}$ denotes the floating-point error from the subtraction of the inner product from the previously computed $\Ombar_k$ or $\Pbar_k$, and $\DeltaC_k^{(i)}$ denotes the error from the Cholesky factorization of that result.

In order to apply the Cholesky factorization and obtain \eqref{eq:PIPI+:Skk_chol} and \eqref{eq:PIPI+:Tkk_chol} in the first place, we need that $\Ombar_k - \SSbar_{1:k-1,k}^T \SSbar_{1:k-1,k} + \DeltaFone_k$ and $\Pbar_k - \TTbar_{1:k-1,k}^T \TTbar_{1:k-1,k} + \DeltaFtwo_k$ are symmetric positive definite. We show that $\Ombar_k - \SSbar_{1:k-1,k}^T \SSbar_{1:k-1,k} + \DeltaFone_k$ is positive definite using the following lemma; symmetry is already clear.

\begin{lemma} \label{lem:PIPI+:SPD}
    Fix $k \in \{2, \ldots, p\}$ and suppose that \eqref{eq:PIPI+:LOO:k-1}--\eqref{eq:PIPI+:Omegak} are satisfied, along with
    \begin{equation} \label{eq:lem:PIPI+:SPD:res}
         \bQQbar_{k-1} \RRbar_{k-1}
         = \bXX_{k-1} + \DeltabDD_{k-1}, \quad \norm{\DeltabDD_{k-1}} \leq \deltaX \norm{\bXX_{k-1}}.
    \end{equation}
    Assume
     \begin{equation} \label{eq:lem:PIPI+:SPD:kappa}
        \begin{split}
            \bigl(\deltaX + 2 \cdot \omega_{k-1} + \deltaSS + \deltaXX + 2 \sqrt{2} \cdot \deltaQX \bigr) \kappa^2(\vX_k) < 1.
        \end{split}
    \end{equation}
    Then it holds that
    \begin{equation}
        \lmin \bigl(\Ombar_k - \SSbar_{1:k-1,k}^T \SSbar_{1:k-1,k} + \DeltaFone_k \bigr) > 0.
    \end{equation}
\end{lemma}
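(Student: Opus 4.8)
The goal is to show the symmetric matrix $M_k := \Ombar_k - \SSbar_{1:k-1,k}^T \SSbar_{1:k-1,k} + \DeltaFone_k$ is positive definite, and the plan is to (i) strip the roundoff from $M_k$ to expose its ``exact'' leading term $\vX_k^T(I - \bQQbar_{k-1}\bQQbar_{k-1}^T)\vX_k$, (ii) bound that term below by $\sigmin^2(\bXX_k)>0$ up to an $\bigO{\norm{\vX_k}^2}$ perturbation, and (iii) use \eqref{eq:lem:PIPI+:SPD:kappa} to conclude the positive part dominates. For step (i) I would substitute $\Ombar_k = \vX_k^T\vX_k + \DeltaOmega_k$ and $\SSbar_{1:k-1,k} = \bQQbar_{k-1}^T\vX_k + \DeltavS_k$ into $M_k$, expand the quadratic form, discard the quadratic term $\DeltavS_k^T\DeltavS_k$, and apply \eqref{eq:PIPI+:normQ} to the cross terms, giving
\[
    M_k = \vX_k^T(I - \bQQbar_{k-1}\bQQbar_{k-1}^T)\vX_k + \vXi_k,
    \qquad
    \norm{\vXi_k} \le \bigl(\deltaXX + \deltaSS + 2\sqrt{2}\,\deltaQX\bigr)\norm{\vX_k}^2,
\]
which already explains three of the five constants in \eqref{eq:lem:PIPI+:SPD:kappa}.

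For step (ii), I would first replace $\bQQbar_{k-1}\bQQbar_{k-1}^T$ by the orthogonal projector $P$ onto $\ran(\bQQbar_{k-1})$: writing $\bQQbar_{k-1}^T\bQQbar_{k-1} = I + E$ with $\norm{E}\le\omega_{k-1}$ by \eqref{eq:PIPI+:LOO:k-1}, we have $\bQQbar_{k-1}\bQQbar_{k-1}^T - P = \bQQbar_{k-1}\bigl(I-(I+E)^{-1}\bigr)\bQQbar_{k-1}^T$, whose norm is at most about $2\omega_{k-1}$ once quadratic terms in $\omega_{k-1}$ are dropped. The diagonal blocks of $\RRbar_{k-1}$ are products of the already-computed Cholesky factors and hence invertible, so \eqref{eq:lem:PIPI+:SPD:res} yields $\ran(\bQQbar_{k-1}) = \ran(\bXX_{k-1}+\DeltabDD_{k-1})$. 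Then, for any unit $\vu\in\spR^s$, putting $\bm{c} := -\RRbar_{k-1}^\inv\bQQbar_{k-1}^T\vX_k\vu$ and using $\bXX_{k-1} = \bQQbar_{k-1}\RRbar_{k-1} - \DeltabDD_{k-1}$ gives the identity
\[
    (I - \bQQbar_{k-1}\bQQbar_{k-1}^T)\vX_k\vu = \bXX_k\bmat{\bm{c} \\ \vu} + \DeltabDD_{k-1}\bm{c},
\]
so that $\norm{(I - \bQQbar_{k-1}\bQQbar_{k-1}^T)\vX_k\vu} \ge \sigmin(\bXX_k) - \norm{\DeltabDD_{k-1}}\,\norm{\bm{c}}$ (because $\bmat{\bm{c}\\\vu}$ has norm at least $\norm{\vu}=1$), with $\norm{\bm{c}}$ controlled through $\norm{\bQQbar_{k-1}}\le\sqrt2$ (from \eqref{eq:PIPI+:normQ}) and $\norm{\RRbar_{k-1}^\inv}$, the latter bounded via $\sigmin(\bXX_{k-1})$, $\omega_{k-1}$, and $\deltaX$ using \eqref{eq:lem:PIPI+:SPD:res}. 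Taking the minimum over $\vu$ and squaring produces, again discarding quadratic terms,
\[
    \lmin\!\bigl(\vX_k^T(I - \bQQbar_{k-1}\bQQbar_{k-1}^T)\vX_k\bigr) \ge \sigmin^2(\bXX_k) - \bigl(\deltaX + 2\omega_{k-1}\bigr)\norm{\vX_k}^2 .
\]

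Combining the two steps gives $\lmin(M_k) \ge \sigmin^2(\bXX_k) - \bigl(\deltaX + 2\omega_{k-1} + \deltaXX + \deltaSS + 2\sqrt2\,\deltaQX\bigr)\norm{\vX_k}^2$, and invoking \eqref{eq:lem:PIPI+:SPD:kappa} (after relating $\norm{\vX_k}^2/\sigmin^2(\bXX_k)$ to $\kappa^2(\vX_k)$) shows the subtracted term is strictly smaller than $\sigmin^2(\bXX_k)$, hence $\lmin(M_k)>0$, as claimed. I expect the main obstacle to be step (ii): the $\sigmin^2(\bXX_k)$ part of the bound is the easy half (immediate from $\norm{\bXX_k\bmat{\bm{c}\\\vu}}\ge\sigmin(\bXX_k)\norm{\bmat{\bm{c}\\\vu}}$), but carefully tracking $\norm{\bm{c}}$, $\norm{\RRbar_{k-1}^\inv}$, and the projector perturbation so as to recover exactly the coefficients $\deltaX$ and $2\omega_{k-1}$ — and so that the condition-number factor comes out as $\kappa^2(\vX_k)$ rather than something coarser — requires delicate bookkeeping, since $\bQQbar_{k-1}$ is neither exactly orthonormal nor exactly spanning $\ran(\bXX_{k-1})$.
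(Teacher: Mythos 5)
Your proposal is correct in outline and follows essentially the same route as the paper's proof: expand $\Ombar_k - \SSbar_{1:k-1,k}^T\SSbar_{1:k-1,k} + \DeltaFone_k$ via \eqref{eq:PIPI+:vSk}--\eqref{eq:PIPI+:Omegak} to expose $\vX_k^T(I-\bQQbar_{k-1}\bQQbar_{k-1}^T)\vX_k$ plus a $\bigl(\deltaXX+\deltaSS+2\sqrt{2}\,\deltaQX\bigr)\norm{\vX_k}^2$ perturbation; pay roughly $2\omega_{k-1}\norm{\vX_k}^2$ for the fact that $I-\bQQbar_{k-1}\bQQbar_{k-1}^T$ is not idempotent (the paper does this by splitting off $\vX_k^T\bQQbar_{k-1}(I-\bQQbar_{k-1}^T\bQQbar_{k-1})\bQQbar_{k-1}^T\vX_k$ rather than by comparing with the exact orthogonal projector, but the cost is the same size); and then use $\bQQbar_{k-1}\RRbar_{k-1}=\bXX_{k-1}+\DeltabDD_{k-1}$ to lower-bound the singular values of the projected $\vX_k$ by $\sigmin(\bXX_k)$ minus a $\deltaX$ term. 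The one substantive divergence is in that last step: the paper writes $(I-\bQQbar_{k-1}\bQQbar_{k-1}^T)\vX_k = \bmat{\bXX_{k-1}+\DeltabDD_{k-1} & \vX_k}\vC^T$ and uses only that the identity block inside $\vC^T$ forces $\norm{\vC^T\vv}\ge\norm{\vv}$, so the perturbation enters simply as $\sigmin(\bXX_k)-\norm{\DeltabDD_{k-1}}$ and no bound on $\norm{\RRbar_{k-1}^\inv}$ (hence none of the $\norm{\bm{c}}$ bookkeeping you flagged as the main obstacle) is ever needed. You instead peel off $\DeltabDD_{k-1}\bm{c}$ as a separate term, so you must control $\norm{\bm{c}}$ through $\norm{\RRbar_{k-1}^\inv}$; that works, but it requires a quantitative condition of the type $\deltaX\kappa(\bXX_{k-1})<1$ (plus $\omega_{k-1}$ small) which the lemma's stated hypothesis, constraining only $\kappa(\vX_k)$, does not literally supply, and it inflates the $\deltaX$ coefficient by a modest constant factor. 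To be fair, the paper's own final step silently trades $\norm{\bXX_k}^2/\sigmin^2(\bXX_k)$ for $\kappa^2(\vX_k)$, so both arguments operate at the same level of constant-chasing; but the augmented-matrix trick is the cleaner and cheaper way to absorb $\DeltabDD_{k-1}$, and adopting it would let your step (ii) close exactly under the stated assumptions.
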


\begin{proof}
    By \eqref{eq:PIPI+:vSk} and \eqref{eq:PIPI+:Omegak}, and dropping quadratic error terms, we have
    \begin{equation} \label{eq:lem:PIPI+:SPD:Skk^2}
        \begin{split}
            \Ombar_k & - \SSbar_{1:k-1,k}^T \SSbar_{1:k-1,k} + \DeltaFone_k  \\
            & \quad = \vX_k^T (I - \bQQbar_{k-1} \bQQbar_{k-1}^T) (I - \bQQbar_{k-1} \bQQbar_{k-1}^T) \vX_k + \DeltaG_k,
        \end{split}
    \end{equation}
    where 
    \begin{equation*}
        \begin{split}
            \DeltaG_k
            & := \DeltaOmega_k + \DeltaFone_k - \vX_k^T \bQQbar_{k-1} \DeltavS_k - \DeltavS_k^T \bQQbar_{k-1}^T \vX_k \\
            & \quad + \vX_k^T \bQQbar_{k-1} \bQQbar_{k-1}^T(I - \bQQbar_{k-1} \bQQbar_{k-1}^T) \vX_k
        \end{split}
    \end{equation*}
    satisfies the following, thanks to
    \begin{equation*}
        \begin{split}
            \norm{\vX_k^T \bQQbar_{k-1} \bQQbar_{k-1}^T(I - \bQQbar_{k-1} \bQQbar_{k-1}^T) \vX_k} & = \norm{\vX_k^T \bQQbar_{k-1} (I - \bQQbar_{k-1}^T \bQQbar_{k-1}) \bQQbar_{k-1}^T \vX_k} \\
            & \leq \norm{\vX_k}^2 \norm{\bQQbar_{k-1}}^2 \norm{I - \bQQbar_{k-1}^T \bQQbar_{k-1}}
        \end{split}
    \end{equation*}
    and \eqref{eq:PIPI+:normQ}--\eqref{eq:PIPI+:Omegak}:
    \begin{equation} \label{eq:lem:PIPI+:SPD:Omega-STS}
        \begin{split}
            \norm{\DeltaG_k}
            & \leq \norm{\vX_k}^2 \norm{\bQQbar_{k-1}}^2 \norm{I - \bQQbar_{k-1}^T \bQQbar_{k-1}}
            + \deltaSS \norm{\vX_k}^2 \\
            & \quad + \deltaXX \norm{\vX_k}^2 
            + 2 \sqrt{1 + \omega_{k-1}}\deltaQX\norm{\vX_k}^2 \\
            & \leq \bigl((1 + \omega_{k-1}) \omega_{k-1} + \deltaSS + \deltaXX
            + 2\sqrt{1 + \omega_{k-1}} \deltaQX \bigr) \norm{\vX_k}^2.
        \end{split}
    \end{equation}
    
    Using \eqref{eq:lem:PIPI+:SPD:Skk^2} and \eqref{eq:lem:PIPI+:SPD:Omega-STS}, we obtain
    \begin{equation} \label{eq:lem:PIP:SPD:sigmin-omega-STS}
        \begin{split}
            \lmin\bigl(\Ombar_k & - \SSbar_{1:k-1,k}^T \SSbar_{1:k-1,k} + \DeltaFone_k\bigr) \\
            & \geq \lmin(\vX_k^T (I - \bQQbar_{k-1} \bQQbar_{k-1}^T) (I - \bQQbar_{k-1} \bQQbar_{k-1}^T) \vX_k) - \norm{\DeltaG_k}  \\
            & = \sigmin^2((I - \bQQbar_{k-1} \bQQbar_{k-1}^T) \vX_k) - \norm{\DeltaG_k}  \\
            & \geq \sigmin^2((I - \bQQbar_{k-1} \bQQbar_{k-1}^T) \vX_k)- \bigl((1 + \omega_{k-1})\omega_{k-1} + \deltaSS \\
            & \quad + \deltaXX + 2 \sqrt{1 + \omega_{k-1}} \deltaQX\bigr)\norm{\vX_k}^2.
        \end{split}
    \end{equation}
    Then we estimate $\sigmin((I - \bQQbar_{k-1} \bQQbar_{k-1}^T) \vX_k)$. 
    Notice that $(I - \bQQbar_{k-1} \bQQbar_{k-1}^T) \vX_k$ can be written as, by the assumption~\eqref{eq:lem:PIPI+:SPD:res},
    \begin{equation}
        \begin{split}
            \vX_k - \bQQbar_{k-1} \bQQbar_{k-1}^T \vX_k
            & = \vX_k - \bQQbar_{k-1} \RRbar_{k-1} \RRbar_{k-1}^\inv \bQQbar_{k-1}^T \vX_k \\
            & = \bmat{\bXX_{k-1} + \DeltabDD_{k-1}& \vX_k} \vC^T
        \end{split}
    \end{equation}
    with $\vC := \bmat{-(\RRbar_{k-1}^\inv \bQQbar_{k-1}^T \vX_k)^T & I_s} \in \spR^{s \times sk}$, and $I_s$ denoting the $s \times s$ identity matrix.
    Then from the perturbation theory of singular values~\cite[Corollary~8.6.2]{GolV13}, we obtain
    \begin{equation*}
        \begin{split}
            \sigmin( & (I - \bQQbar_{k-1} \bQQbar_{k-1}^T) \vX_k) \\
            & = \sqrt{\min_{\vv \in \spR^{sk}\setminus{\bm{0}}} \left(\frac{\norm{\bmat{\bXX_{k-1} + \DeltabDD_{k-1}& \vX_k} \vC^T \vv}^2}{\norm{\vv}^2} \right)} \\
            & = \sqrt{\min_{\vv \in \spR^{sk}\setminus{\bm{0}}} \left(\frac{\norm{\bmat{\bXX_{k-1} + \DeltabDD_{k-1}& \vX_k} \vC^T \vv}^2}{\norm{\vC^T \vv}^2}
            \frac{\norm{\vC^T \vv}^2}{\norm{\vv}^2} \right)} \\
            & \geq \sqrt{\min_{\vv \in \spR^{sk}\setminus{\bm{0}}}
            \left(\frac{\norm{\bmat{\bXX_{k-1} + \DeltabDD_{k-1} & \vX_k}(\vC^T \vv)}^2}{\norm{\vC^T \vv}^2} \right)
            \min_{\vv \in \spR^{sk}\setminus{\bm{0}}} \left(\frac{\norm{\vC^T \vv}^2}{\norm{\vv}^2} \right)} \\
            & \geq \sigmin(\bmat{\bXX_{k-1} + \DeltabDD_{k-1} & \vX_k}) \\
            & \geq \sigmin(\bXX_k) - \norm{\DeltabDD_{k-1}} \\
            & \geq \sigmin(\bXX_k) - \deltaX \norm{\bXX_{k-1}}.
        \end{split}
    \end{equation*}
    Together with~\eqref{eq:lem:PIP:SPD:sigmin-omega-STS}, it holds that
    \begin{equation}
        \begin{split}
            \lmin\bigl(\Ombar_k & - \SSbar_{1:k-1,k}^T \SSbar_{1:k-1,k} + \DeltaFone_k\bigr) \\
            & \geq \sigmin^2(\bXX_k) - \deltaX \norm{\bXX_k}^2 - \bigl((1 + \omega_{k-1})\omega_{k-1} + \deltaSS \\
            & \quad + \deltaXX + 2 \sqrt{1 + \omega_{k-1}} \deltaQX\bigr)\norm{\vX_k}^2 \\
            & \geq \sigmin^2(\bXX_k) \bigl(1 - \bigl(\deltaX + (1 + \omega_{k-1})\omega_{k-1}
            + \deltaSS \\
            & \quad + \deltaXX + 2 \sqrt{1 + \omega_{k-1}} \deltaQX\bigr) \kappa^2(\vX_k)\bigr).
        \end{split}
    \end{equation}

    By dropping the quadratic error terms and using assumption \eqref{eq:lem:PIPI+:SPD:kappa}, which guarantees
    \[
        \bigl(\deltaX + (1 + \omega_{k-1}) \omega_{k-1} + \deltaSS + \deltaXX + 2 \sqrt{1 + \omega_{k-1}} \deltaQX\bigr) \kappa^2(\vX_k) < 1,
    \]
    we can therefore conclude that $\lmin \bigl(\Ombar_k - \SSbar_{1:k-1,k}^T \SSbar_{1:k-1,k} + \DeltaFone_k\bigr) > 0$.
\end{proof}

From Lemma~\ref{lem:PIPI+:SPD}, we have shown that $\Ombar_k - \SSbar_{1:k-1,k}^T \SSbar_{1:k-1,k} + \DeltaFone_k$ is symmetric positive definite. Before proving that $\Pbar_k - \TTbar_{1:k-1,k}^T \TTbar_{1:k-1,k} + \DeltaFtwo_k$ is also symmetric positive definite, we need to first bound $\norm{\vUbar_k}$. 

\begin{lemma} \label{lem:PIPI+}
    Fix $k \in \{2, \ldots, p\}$ and suppose that \eqref{eq:PIPI+:LOO:k-1}--\eqref{eq:PIPI+:Uk} are satisfied.  Furthermore suppose that $\bQQbar_{k-1}$ and $\RRbar_{k-1}$ satisfy the following:
    \begin{align}
         \bQQbar_{k-1} \RRbar_{k-1}
         &= \bXX_{k-1} + \DeltabDD_{k-1},\quad \norm{\DeltabDD_{k-1}} \leq \deltaX \norm{\bXX_{k-1}}, \label{eq:lem:PIPI+:res}
    \end{align}
    with $\deltaX, \omega_{k-1} \in (0,1)$.  Assume
    \begin{equation} \label{eq:lem:PIPI+:delUkappa}
        8 \cdot \deltaU \kappa^2(\bXX) \leq 1
    \end{equation}
    and
    \begin{equation}  \label{eq:lem:PIPI+:kappa}
        \begin{split}
            & \bigl(\deltaX + 2 \cdot \omega_{k-1} + 2 \cdot \deltaSS + 2 \cdot \deltaXX + 2 \cdot \deltacholone \\
            & + 18 \cdot \deltaQX + 8 \cdot \deltaQS\bigr) \kappa^2(\bXX_k) \leq \frac{1}{2}.
        \end{split}
    \end{equation}
    Then it holds that
    \begin{align}
        \Sbar_{kk}^T \Sbar_{kk}
        & = \vX_k^T \vX_k - \vX_k^T \bQQbar_{k-1} \bQQbar_{k-1}^T \vX_k + \DeltaS_{kk}, \mbox{ and} \label{eq:lem:PIPI+:Skk^2} \\
        \vUbar_k \Sbar_{kk}
        & = \vX_k - \bQQbar_{k-1} \bQQbar_{k-1}^T \vX_k + \DeltavU_k, \mbox{ where} \label{eq:lem:PIPI+:UkSkk} \\
        \norm{\DeltaS_{kk}} & \leq \bigl(\deltaSS + \deltaXX + \deltacholone + 2 \sqrt{2} \cdot\deltaQX \bigr) \norm{\vX_k}^2, \label{eq:lem:PIPI+:DeltaSkk} \\
        \norm{\Sbar_{kk}} & \leq 3 \norm{\vX_k}, \label{eq:lem:PIPI+:Skk} \\
        \norm{\Sbar_{kk}^\inv} &\leq \frac{\sqrt{2}}{\sigmin(\bXX_k)},
        \label{eq:lem:PIPI+:Skkinv}\\
        \norm{\DeltavU_k} & \leq \bigl(\deltaU \norm{\vUbar_k} + \sqrt{2} \cdot\deltaQX + \deltaQS\bigr) \norm{\vX_k}, \label{eq:lem:PIPI+:DeltaUk} \mbox{ and} \\
        \norm{\vUbar_k} & \leq 2. \label{eq:lem:PIPI+:Uk}
    \end{align}
\end{lemma}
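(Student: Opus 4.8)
The plan is to peel the seven conclusions off one at a time, reusing the elementary rounding relations \eqref{eq:PIPI+:vSk}--\eqref{eq:PIPI+:Uk}, the bound $\norm{\bQQbar_{k-1}}\le\sqrt{1+\omega_{k-1}}\le\sqrt2$ from \eqref{eq:PIPI+:normQ}, and the intermediate estimates already produced inside the proof of Lemma~\ref{lem:PIPI+:SPD}. That last ingredient is available because \eqref{eq:lem:PIPI+:kappa} implies the hypothesis \eqref{eq:lem:PIPI+:SPD:kappa} of Lemma~\ref{lem:PIPI+:SPD} (the coefficient there is smaller, and $\kappa(\vX_k)\le\kappa(\bXX_k)$ by \cite[Corollary~8.6.3]{GolV13}), so in particular the Cholesky factorization in \eqref{eq:PIPI+:Skk_chol} is well defined. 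I would first dispatch \eqref{eq:lem:PIPI+:Skk^2}--\eqref{eq:lem:PIPI+:DeltaSkk}: substitute \eqref{eq:PIPI+:vSk} and \eqref{eq:PIPI+:Omegak} into \eqref{eq:PIPI+:Skk_chol}, expand $\SSbar_{1:k-1,k}^T\SSbar_{1:k-1,k}$, drop the quadratic term $\DeltavS_k^T\DeltavS_k$, and collect the rest into $\DeltaS_{kk}=\DeltaOmega_k+\DeltaFone_k+\DeltaCone_k-\vX_k^T\bQQbar_{k-1}\DeltavS_k-\DeltavS_k^T\bQQbar_{k-1}^T\vX_k$, whose norm is bounded using $\norm{\bQQbar_{k-1}}\le\sqrt2$. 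Taking norms in \eqref{eq:lem:PIPI+:Skk^2} with $\norm{\bQQbar_{k-1}}^2\le 1+\omega_{k-1}\le2$ gives $\norm{\Sbar_{kk}}^2\le(3+\text{tiny})\norm{\vX_k}^2$, establishing \eqref{eq:lem:PIPI+:Skk}. Inserting \eqref{eq:PIPI+:vSk} into \eqref{eq:PIPI+:Vk} and then into \eqref{eq:PIPI+:Uk} gives the identity \eqref{eq:lem:PIPI+:UkSkk} with $\DeltavU_k=\DeltavV_k+\DeltavGone_k-\bQQbar_{k-1}\DeltavS_k$; bounding $\norm{\DeltavGone_k}\le\deltaU\norm{\vUbar_k}\norm{\Sbar_{kk}}$ via \eqref{eq:lem:PIPI+:Skk} and absorbing the bounded factor into $\deltaU$ (as done throughout the section) yields \eqref{eq:lem:PIPI+:DeltaUk}.

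For \eqref{eq:lem:PIPI+:Skkinv} one cannot use \eqref{eq:lem:PIPI+:Skk^2} directly, since $I-\bQQbar_{k-1}\bQQbar_{k-1}^T$ is only approximately positive semidefinite; instead I would recall from the proof of Lemma~\ref{lem:PIPI+:SPD} the ``squared-projection'' identity $\Sbar_{kk}^T\Sbar_{kk}=\vX_k^T(I-\bQQbar_{k-1}\bQQbar_{k-1}^T)^2\vX_k+\DeltaG_k+\DeltaCone_k$, which gives $\lmin(\Sbar_{kk}^T\Sbar_{kk})\ge\sigmin^2\bigl((I-\bQQbar_{k-1}\bQQbar_{k-1}^T)\vX_k\bigr)-\norm{\DeltaG_k}-\norm{\DeltaCone_k}$. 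The singular-value perturbation estimate inside Lemma~\ref{lem:PIPI+:SPD} --- the step where the residual hypothesis \eqref{eq:lem:PIPI+:res} is used --- bounds the first term below by $\sigmin^2(\bXX_k)$ times a factor of the form $1-(\text{a combination of }\delta_\ast\text{ and }\omega_{k-1})\kappa^2(\bXX_k)$, and assumption \eqref{eq:lem:PIPI+:kappa} then forces $\lmin(\Sbar_{kk}^T\Sbar_{kk})\ge\tfrac12\sigmin^2(\bXX_k)$; \eqref{eq:lem:PIPI+:Skkinv} follows on taking reciprocal square roots.

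The main obstacle is \eqref{eq:lem:PIPI+:Uk}. The naive estimate $\norm{\vUbar_k}\le(\norm{\vX_k}+\norm{\DeltavU_k})\norm{\Sbar_{kk}^\inv}$ together with \eqref{eq:lem:PIPI+:Skkinv} only yields $\norm{\vUbar_k}=\bigO{\kappa(\bXX_k)}$, which is useless; the fix is to estimate $\vUbar_k^T\vUbar_k$ directly, exploiting that $\norm{(I-\bQQbar_{k-1}\bQQbar_{k-1}^T)\vX_k}$ and $1/\norm{\Sbar_{kk}^\inv}$ shrink together. Writing $\vUbar_k=\bigl((I-\bQQbar_{k-1}\bQQbar_{k-1}^T)\vX_k+\DeltavU_k\bigr)\Sbar_{kk}^\inv$ from \eqref{eq:lem:PIPI+:UkSkk}, expanding the inner product, using $(I-\bQQbar_{k-1}\bQQbar_{k-1}^T)^2=(I-\bQQbar_{k-1}\bQQbar_{k-1}^T)-\bQQbar_{k-1}(I-\bQQbar_{k-1}^T\bQQbar_{k-1})\bQQbar_{k-1}^T$ and then \eqref{eq:lem:PIPI+:Skk^2}, the inner factor equals $\Sbar_{kk}^T\Sbar_{kk}+E_k$, where $E_k$ gathers $-\DeltaS_{kk}$, the $\bQQbar_{k-1}(I-\bQQbar_{k-1}^T\bQQbar_{k-1})\bQQbar_{k-1}^T$ term (bounded via \eqref{eq:PIPI+:LOO:k-1} and $\norm{\bQQbar_{k-1}}^2\le 1+\omega_{k-1}$), and the two cross terms in $\DeltavU_k$ (bounded via \eqref{eq:lem:PIPI+:DeltaUk} together with $\norm{I-\bQQbar_{k-1}\bQQbar_{k-1}^T}\le 1$, which holds because \eqref{eq:PIPI+:LOO:k-1} puts the spectrum of $\bQQbar_{k-1}\bQQbar_{k-1}^T$ in $[0,1+\omega_{k-1}]$, hence that of $I-\bQQbar_{k-1}\bQQbar_{k-1}^T$ in $[-\omega_{k-1},1]$). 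Hence $\vUbar_k^T\vUbar_k=I+\Sbar_{kk}^{-T}E_k\Sbar_{kk}^\inv$, and by \eqref{eq:lem:PIPI+:Skkinv},
\[
    \norm{\vUbar_k}^2\le 1+\norm{\Sbar_{kk}^\inv}^2\norm{E_k}\le 1+\frac{2}{\sigmin^2(\bXX_k)}\norm{E_k}.
\]
Since $\norm{E_k}\le\bigl((\text{a combination of }\delta_\ast)+2\omega_{k-1}+2\deltaU\norm{\vUbar_k}\bigr)\norm{\vX_k}^2$ and $\norm{\vX_k}^2/\sigmin^2(\bXX_k)\le\kappa^2(\bXX_k)\le\kappa^2(\bXX)$, the $\deltaU$-contribution is at most $4\deltaU\kappa^2(\bXX)\norm{\vUbar_k}\le\tfrac12\norm{\vUbar_k}$ by \eqref{eq:lem:PIPI+:delUkappa}, while the remaining contributions are $\le\tfrac12$ by \eqref{eq:lem:PIPI+:kappa}; hence $\norm{\vUbar_k}^2\le\tfrac32+\tfrac12\norm{\vUbar_k}$, which forces $\norm{\vUbar_k}\le\tfrac32\le2$. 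The two genuinely delicate points are: (i) using the tight bound $\norm{\bQQbar_{k-1}}^2\le 1+\omega_{k-1}$ (not merely $\le2$) and dropping the quadratic $\omega_{k-1}^2$ term, so that the discrepancy between $(I-\bQQbar_{k-1}\bQQbar_{k-1}^T)^2$ and $I-\bQQbar_{k-1}\bQQbar_{k-1}^T$ fits into the $2\omega_{k-1}$ slack built into \eqref{eq:lem:PIPI+:kappa}; and (ii) breaking the circular appearance of $\norm{\vUbar_k}$ on both sides, which is exactly what the dedicated hypothesis \eqref{eq:lem:PIPI+:delUkappa} is there to absorb.
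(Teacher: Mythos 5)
Your proposal is correct and follows essentially the same route as the paper's proof: the same substitution of \eqref{eq:PIPI+:vSk}--\eqref{eq:PIPI+:Omegak} into \eqref{eq:PIPI+:Skk_chol} with $\DeltaS_{kk}$ collected identically, the same reuse of the singular-value/residual argument from Lemma~\ref{lem:PIPI+:SPD} to get $\norm{\Sbar_{kk}^\inv}\leq\sqrt{2}/\sigmin(\bXX_k)$, and the same trick for \eqref{eq:lem:PIPI+:Uk} of forming $\vUbar_k^T\vUbar_k = I + \Sbar_{kk}^{-T}(\cdot)\Sbar_{kk}^{-1}$ and closing the circular dependence on $\norm{\vUbar_k}$ via \eqref{eq:lem:PIPI+:delUkappa} and a quadratic inequality. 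The only differences are immaterial bookkeeping of constants in the final inequality (you get $\norm{\vUbar_k}^2\leq\tfrac32+\tfrac12\norm{\vUbar_k}$ versus the paper's $\norm{\vUbar_k}^2\leq\tfrac32+\norm{\vUbar_k}$), both of which yield the stated bound.
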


\begin{proof}
    From \eqref{eq:PIPI+:normQ}, \eqref{eq:PIPI+:vSk}, and \eqref{eq:PIPI+:Omegak}, we obtain
    \begin{equation} \label{eq:lem:PIPI+:Omegak_norm_vSk_norm}
        \begin{split}
            \norm{\SSbar_{1:k-1,k}}
            & \leq (\sqrt{2} + \deltaQX) \norm{\vX_k} \mbox{ and} \\
            \norm{\Ombar_k}
            & \leq (1 + \deltaXX) \norm{\vX_k}^2.
        \end{split}
    \end{equation}
    Substituting \eqref{eq:PIPI+:vSk}, \eqref{eq:PIPI+:Omegak}, and \eqref{eq:lem:PIPI+:Omegak_norm_vSk_norm} into \eqref{eq:PIPI+:Skk_chol} gives \eqref{eq:lem:PIPI+:Skk^2} with
    \begin{equation*}
        \DeltaS_{kk} := \DeltaOmega_k + \DeltaFone_k + \DeltaCone_k - \vX_k^T \bQQbar_{k-1} \DeltavS_k - \DeltavS_k^T \bQQbar_{k-1}^T \vX_k.
    \end{equation*}
    From the bounds~\eqref{eq:PIPI+:normQ},  \eqref{eq:PIPI+:vSk}, \eqref{eq:PIPI+:Omegak},  \eqref{eq:PIPI+:Skk_chol}, and \eqref{eq:PIPI+:Skk_chol_errbound}, the desired bounds \eqref{eq:lem:PIPI+:DeltaSkk} and \eqref{eq:lem:PIPI+:Skk} then follow immediately.

    To find \eqref{eq:lem:PIPI+:Skkinv}, we rewrite \eqref{eq:lem:PIPI+:Skk^2}  as 
    \begin{equation} \label{eq:lem:PIPI+:Skk^2:1}
		\Sbar_{kk}^T \Sbar_{kk}
		= \vX_k^T (I - \bQQbar_{k-1} \bQQbar_{k-1}^T) (I - \bQQbar_{k-1} \bQQbar_{k-1}^T) \vX_k
		+ \DeltaStil_k,
	\end{equation}
    where $\DeltaStil_k := \vX_k^T \bQQbar_{k-1} \bQQbar_{k-1}^T(I - \bQQbar_{k-1} \bQQbar_{k-1}^T) \vX_k + \DeltaS_{kk}$. Using a similar logic in the proof of Lemma \ref{lem:PIPI+:SPD} together with \eqref{eq:PIPI+:LOO:k-1} and \eqref{eq:PIPI+:normQ} we obtain \eqref{eq:lem:PIPI+:Skkinv} since
    $\norm{\Sbar_{kk}^\inv} = \frac{1}{\sigmin(\Sbar_{kk})}$ and
    \begin{equation} \label{eq:lem:PIPI+:sigmin_Skk}
        \begin{split}
            \sigmin^2(\Sbar_{kk})
            & \geq \sigmin^2(\bXX_k) - \deltaX \norm{\bXX_k}^2 - \bigl(\omega_{k-1} + \deltaSS \\
            & \quad + \deltaXX + \deltacholone + 2 \sqrt{2} \cdot \deltaQX\bigr)\norm{\vX_k}^2.
        \end{split}
    \end{equation}
    Note that the above reasoning makes sense only if 
    \[
        \sigmin^2(\bXX_k) > \bigl(\deltaX + \omega_{k-1}
        + \deltaSS + \deltaXX + \deltacholone + 2 \sqrt{2} \cdot\deltaQX\bigr)\norm{\vX_k}^2;
    \]
    that is, if
    \[
        \bigl(\deltaX + \omega_{k-1}
        + \deltaSS + \deltaXX + \deltacholone + 2 \sqrt{2} \cdot \deltaQX\bigr) \kappa^2(\bXX_k) < 1,
    \]
    which can be guaranteed by the assumption \eqref{eq:lem:PIPI+:kappa}.

    To prove \eqref{eq:lem:PIPI+:DeltaUk} we first note from~\eqref{eq:PIPI+:Vk} and the substitution of \eqref{eq:PIPI+:vSk} that
    \begin{equation} \label{eq:lem:PIPI+:Vk}
        \begin{split}
            \vVbar_k
            & = \vX_k - \bQQbar_{k-1} \bQQbar_{k-1}^T \vX_k + \DeltavV_k, \\
            \norm{\DeltavV_k}
            & \leq \bigl(\sqrt{2} \cdot \deltaQX + \deltaQS \bigr) \norm{\vX_k}.
        \end{split}
    \end{equation}
    Substituting \eqref{eq:lem:PIPI+:Vk} into \eqref{eq:PIPI+:Uk} leads to \eqref{eq:lem:PIPI+:UkSkk}, with
    \[
        \DeltavU_k := \DeltavGone_k + \DeltavV_k
    \]
    and the desired bound \eqref{eq:lem:PIPI+:DeltaUk} satisfied.

    The final bound \eqref{eq:lem:PIPI+:Uk} requires a bit more work. Multiplying $\vUbar_k \Sbar_{kk}$ with its transpose gives
    \begin{equation} \label{eq:lem:PIPI+:UkSkk^2}
        \begin{split}
            (\vUbar_k \Sbar_{kk})^T (\vUbar_k \Sbar_{kk})
            =\,& \vX_k^T \vX_k - 2 \vX_k^T \bQQbar_{k-1} \bQQbar_{k-1}^T \vX_k \\
            & + \vX_k^T \bQQbar_{k-1} \bQQbar_{k-1}^T \bQQbar_{k-1} \bQQbar_{k-1}^T \vX_k + \DeltaH_k \\
            =\,& \vX_k^T \vX_k - \vX_k^T\bQQbar_{k-1} \bQQbar_{k-1}^T \vX_k \\
            & + \vX_k^T \bQQbar_{k-1}(\bQQbar_{k-1}^T \bQQbar_{k-1} - I) \bQQbar_{k-1}^T \vX_k + \DeltaH_k,
        \end{split}
    \end{equation}
    where
    \begin{equation*}
        \DeltaH_k :=
        \vX_k^T \DeltavU_k + \bigl(\DeltavU_k\bigr)^T \vX_k
        + \vX_k^T \bQQbar_{k-1} \bQQbar_{k-1}^T \DeltavU_k
        + \bigl(\DeltavU_k\bigr)^T \bQQbar_{k-1} \bQQbar_{k-1}^T \vX_k.
    \end{equation*}
    Applying~\eqref{eq:PIPI+:normQ} and \eqref{eq:lem:PIPI+:DeltaUk} gives
    \begin{equation} \label{eq:lem:PIPI+:normDeltaHk}
        \norm{\DeltaH_k} \leq \bigl(4 + 2 \cdot \omega_{k-1}\bigr)\bigl(\deltaU \norm{\vUbar_k} + \sqrt{2} \cdot \deltaQX + \deltaQS \bigr) \norm{\vX_k}^2.
    \end{equation}
    Substituting \eqref{eq:lem:PIPI+:Skk^2} into \eqref{eq:lem:PIPI+:UkSkk^2} leads to
    \begin{equation*}
       (\vUbar_k \Sbar_{kk})^T (\vUbar_k \Sbar_{kk})
       = \Sbar_{kk}^T \Sbar_{kk}
       + \vX_k^T \bQQbar_{k-1} (\bQQbar_{k-1}^T \bQQbar_{k-1} - I) \bQQbar_{k-1}^T \vX_k
       + \DeltaH_k - \DeltaS_{kk},
    \end{equation*}
    and then multiplying by $S_{kk}^\tinv$ on the left and $S_{kk}^\inv$ on the right yields
    \begin{equation} \label{eq:lem:PIPI+:Uk^2}
        \begin{split}
           \vUbar_k^T \vUbar_k
           & = I
           + \Sbar_{kk}^\tinv \vX_k^T \bQQbar_{k-1} (\bQQbar_{k-1}^T \bQQbar_{k-1} - I) \bQQbar_{k-1}^T \vX_k \Sbar_{kk}^\inv  \\
           & \quad + \Sbar_{kk}^\tinv (\DeltaH_k - \DeltaS_{kk}) \Sbar_{kk}^\inv.
        \end{split}
    \end{equation}
    Recalling the assumption \eqref{eq:lem:PIPI+:delUkappa}, we note that
    \[
        4\bigl(2 + \omega_{k-1}\bigr) \deltaU \kappa^2(\bXX_k) \leq 4\bigl(2 + \omega_{k-1}\bigr) \deltaU \kappa^2(\bXX) \leq 1.
    \]
    Applying \eqref{eq:PIPI+:LOO:k-1}, \eqref{eq:PIPI+:normQ}, \eqref{eq:lem:PIPI+:normDeltaHk}, \eqref{eq:lem:PIPI+:DeltaSkk}, \eqref{eq:lem:PIPI+:delUkappa} along with \eqref{eq:lem:PIPI+:Skkinv}, taking norms yields the following:
    \begin{equation}\label{eq:lem:PIPI+:Uk_quad}
        \begin{split}
            \norm{\vUbar_k}^2
            & = \norm{\vUbar_k^T\vUbar_k} \\
            & \leq 1 +  \omega_{k-1} \norm{\Sbar_{kk}^\inv}^2 \norm{\vX_k}^2 + 4\bigl(\deltaU \norm{\vUbar_k} + \sqrt{2} \cdot\deltaQX \\
            & \quad + \deltaQS \bigr) \norm{\Sbar_{kk}^\inv}^2 \norm{\vX_k}^2 + \bigl(\deltaSS + \deltaXX + \deltacholone\\
            & \quad  + 2 \sqrt{2} \cdot \deltaQX \bigr) \norm{\Sbar_{kk}^\inv}^2 \norm{\vX_k}^2 \\
            & \leq 1 + 2\bigl( \omega_{k-1} + \deltaSS + \deltaXX + \deltacholone  + 9 \cdot \deltaQX + 4 \cdot \deltaQS \bigr) \kappa^2(\bXX_k) \\
            & \quad + 8 \cdot \deltaU \norm{\vUbar_k} \kappa^2(\bXX_k)\\
            & \leq \frac{3}{2} + \norm{\vUbar_k}.
        \end{split}
    \end{equation}
    Solving the quadratic inequality~\eqref{eq:lem:PIPI+:Uk_quad} gives 
    \begin{equation*}
        \norm{\vUbar_k} \leq 2.
    \end{equation*}
\end{proof}

Now using similar logic as in the proof of Lemma~\ref{lem:PIPI+:SPD}, we can combine the above lemma with \eqref{eq:PIPI+:vTk} and \eqref{eq:PIPI+:Pk} to conclude that $\Pbar_k - \TTbar_{1:k-1,k}^T \TTbar_{1:k-1,k} + \DeltaFtwo_k$ is symmetric positive definite.

The following theorem makes the relationship between each source of error and the bound on the LOO explicit.

\begin{theorem} \label{thm:PIPI+:LOO}
    Fix $k \in \{2, \ldots, p\}$ and assume that \eqref{eq:PIPI+:LOO:k-1}--\eqref{eq:PIPI+:Tkk} are satisfied with
    \begin{equation} \label{eq:thm:PIPI+:LOO:assump}
        \begin{split}
            & 2\bigl(4 \cdot \omega_{k-1} + \deltaSS + \deltaXX + \deltacholone + 7 \cdot \deltaQS
            + 13 \cdot \deltaQX + 14 \cdot \deltaU  \bigr) \kappa^2(\bXX) \\
            & + 4\bigl(\omega_{k-1} + \deltaUU + \deltaTT + \deltacholtwo + 3 \cdot \deltaQU\bigr) \leq \frac{1}{2}.
        \end{split}
    \end{equation}
    Further assume that the assumptions of Lemma~\ref{lem:PIPI+} hold. Then $\bQQbar_k$ satisfies
    \begin{equation} \label{eq:thm:PIPI+:LOO}
        \begin{split}
            \norm{I - \bQQbar_k^T \bQQbar_k}
            & \leq 17 \cdot \omega_{k-1} + 30 \cdot \deltaQU + 40 \cdot \deltaQT \\
            & \quad + 120 \cdot \deltaQ + 8 \cdot \deltaTT + 8 \cdot \deltacholtwo.
        \end{split}
    \end{equation}
\end{theorem}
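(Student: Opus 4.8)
The plan is to split off the new block column: writing $\bQQbar_k = \bmat{\bQQbar_{k-1} & \vQbar_k}$ gives
\[
    I - \bQQbar_k^T \bQQbar_k
    = \bmat{ I - \bQQbar_{k-1}^T \bQQbar_{k-1} & -\bQQbar_{k-1}^T \vQbar_k \\
             -\vQbar_k^T \bQQbar_{k-1} & I - \vQbar_k^T \vQbar_k },
\]
so that, splitting into block-diagonal and block-antidiagonal parts and using \eqref{eq:PIPI+:LOO:k-1},
\[
    \norm{I - \bQQbar_k^T \bQQbar_k}
    \leq \max\bigl(\omega_{k-1},\, \norm{I - \vQbar_k^T \vQbar_k}\bigr) + \norm{\bQQbar_{k-1}^T \vQbar_k}.
\]
Everything then reduces to bounding the diagonal term $\norm{I - \vQbar_k^T \vQbar_k}$ and the cross term $\norm{\bQQbar_{k-1}^T \vQbar_k}$, both governed by the second \BCGSPIP step.

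Two algebraic identities would drive the argument. Substituting \eqref{eq:PIPI+:vTk} into \eqref{eq:PIPI+:Wk} and then into \eqref{eq:PIPI+:Tkk} yields
\[
    \vQbar_k \Tbar_{kk} = (I - \bQQbar_{k-1}\bQQbar_{k-1}^T)\vUbar_k + \Delta_k,
    \qquad \Delta_k := -\bQQbar_{k-1}\DeltavT_k + \DeltavW_k + \DeltavGtwo_k,
\]
and premultiplying by $\bQQbar_{k-1}^T$ and postmultiplying by $\Tbar_{kk}^\inv$ gives
\[
    \bQQbar_{k-1}^T \vQbar_k
    = -(\bQQbar_{k-1}^T\bQQbar_{k-1} - I)(\bQQbar_{k-1}^T\vUbar_k)\Tbar_{kk}^\inv
      + \bQQbar_{k-1}^T \Delta_k \Tbar_{kk}^\inv ,
\]
so the cross term is at most $\omega_{k-1}\norm{(\bQQbar_{k-1}^T\vUbar_k)\Tbar_{kk}^\inv} + \norm{\bQQbar_{k-1}}\,\norm{\Delta_k}\,\norm{\Tbar_{kk}^\inv}$. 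For the diagonal term I would combine the identity $(I-P)^2 = (I-P) + \bQQbar_{k-1}(\bQQbar_{k-1}^T\bQQbar_{k-1} - I)\bQQbar_{k-1}^T$ for the (non-idempotent) $P = \bQQbar_{k-1}\bQQbar_{k-1}^T$ with the Pythagorean/Cholesky relation \eqref{eq:PIPI+:Tkk_chol} after inserting \eqref{eq:PIPI+:vTk} and \eqref{eq:PIPI+:Pk}; forming $(\vQbar_k\Tbar_{kk})^T(\vQbar_k\Tbar_{kk})$ from the first identity and subtracting $\Tbar_{kk}^T\Tbar_{kk}$, the leading $\vUbar_k^T\vUbar_k$ and $(\bQQbar_{k-1}^T\vUbar_k)^T(\bQQbar_{k-1}^T\vUbar_k)$ contributions cancel, leaving
\[
    \Tbar_{kk}^T\bigl(\vQbar_k^T\vQbar_k - I\bigr)\Tbar_{kk}
    = (\bQQbar_{k-1}^T\vUbar_k)^T(\bQQbar_{k-1}^T\bQQbar_{k-1} - I)(\bQQbar_{k-1}^T\vUbar_k) + \EE_k,
\]
where $\EE_k$ collects the second-step rounding errors $\DeltaP_k, \DeltavT_k, \DeltaFtwo_k, \DeltaCtwo_k, \DeltavW_k, \DeltavGtwo_k$ (quadratic terms dropped). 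Conjugating by $\Tbar_{kk}^\inv$ yields
\[
    \norm{I - \vQbar_k^T\vQbar_k}
    \leq \norm{\Tbar_{kk}^\inv}^2\bigl(\omega_{k-1}\norm{\bQQbar_{k-1}^T\vUbar_k}^2 + \norm{\EE_k}\bigr).
\]

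The remaining work is to bound the auxiliary quantities $\norm{\vUbar_k}$, $\norm{\bQQbar_{k-1}^T\vUbar_k}$, $\norm{\Tbar_{kk}}$, $\norm{\Tbar_{kk}^\inv}$, $\norm{\vQbar_k}$, $\norm{\Delta_k}$, $\norm{\EE_k}$ by modest, $\kappa$-free constants times the appropriate $\delta$'s (and $\omega_{k-1}$), and to substitute into the two displays above and then into the block-matrix inequality. Lemma~\ref{lem:PIPI+} supplies $\norm{\vUbar_k}\leq 2$, $\norm{\Sbar_{kk}^\inv}\leq \sqrt{2}/\sigmin(\bXX_k)$, and \eqref{eq:lem:PIPI+:UkSkk}; combining \eqref{eq:lem:PIPI+:UkSkk} with the lower bound on $\sigmin((I-\bQQbar_{k-1}\bQQbar_{k-1}^T)\vX_k)$ from the proof of Lemma~\ref{lem:PIPI+:SPD} gives $\norm{\bQQbar_{k-1}^T\vUbar_k} = O\bigl((\omega_{k-1} + \deltaU + \deltaQX + \deltaQS)\kappa(\bXX_k)\bigr)$, which assumption \eqref{eq:thm:PIPI+:LOO:assump} forces to a modest constant strictly below $1$; since $\norm{\Tbar_{kk}}^2 = \norm{\Pbar_k - \TTbar_{1:k-1,k}^T\TTbar_{1:k-1,k} + \DeltaFtwo_k + \DeltaCtwo_k}$, the same bounds give $\norm{\Tbar_{kk}} = O(1)$, and then $\norm{\vQbar_k}$, $\norm{\Delta_k}$, $\norm{\EE_k}$ are $O(\deltaQU + \deltaQT + \deltaQ + \deltaUU + \deltaTT + \deltacholtwo)$ with $O(1)$ constants. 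The genuinely delicate step — and, I expect, the main obstacle — is the lower bound $\norm{\Tbar_{kk}^\inv} = O(1)$, equivalently $\sigmin((I-\bQQbar_{k-1}\bQQbar_{k-1}^T)\vUbar_k)\gtrsim 1$. One shows that the Cholesky input $\Pbar_k - \TTbar_{1:k-1,k}^T\TTbar_{1:k-1,k} + \DeltaFtwo_k$ equals $\vUbar_k^T(I-\bQQbar_{k-1}\bQQbar_{k-1}^T)^2\vUbar_k$ up to a perturbation of modest constant norm (this is essentially the computation used just before the theorem to prove that this matrix is SPD), together with $\sigmin((I-\bQQbar_{k-1}\bQQbar_{k-1}^T)\vUbar_k) \geq \sigmin(\vUbar_k) - \norm{\bQQbar_{k-1}}\norm{\bQQbar_{k-1}^T\vUbar_k}$; the missing ingredient is a positive lower bound $\sigmin(\vUbar_k)\gtrsim 1$, which is \emph{not} in the statement of Lemma~\ref{lem:PIPI+} but can be read off from identity \eqref{eq:lem:PIPI+:Uk^2} in its proof by reusing the quadratic-inequality argument there: under the stronger hypothesis \eqref{eq:thm:PIPI+:LOO:assump} (rather than \eqref{eq:lem:PIPI+:kappa} and \eqref{eq:lem:PIPI+:delUkappa} alone), the two correction terms $\Sbar_{kk}^\tinv\vX_k^T\bQQbar_{k-1}(\bQQbar_{k-1}^T\bQQbar_{k-1} - I)\bQQbar_{k-1}^T\vX_k\Sbar_{kk}^\inv$ and $\Sbar_{kk}^\tinv(\DeltaH_k - \DeltaS_{kk})\Sbar_{kk}^\inv$ in that identity have total norm below, say, $\tfrac12$, so $\lmin(\vUbar_k^T\vUbar_k)$ stays bounded below by a positive constant. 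This is exactly where the Pythagorean correction pays off: it pins $\Tbar_{kk}^T\Tbar_{kk}$ to $(\vQbar_k\Tbar_{kk})^T(\vQbar_k\Tbar_{kk})$ up to a genuinely small perturbation, so that conjugating the master identity by $\Tbar_{kk}^\inv$ does not reintroduce a factor $\kappa^2(\bXX)$. Granting $\norm{\Tbar_{kk}^\inv} = O(1)$, assembling all the bounds produces \eqref{eq:thm:PIPI+:LOO}, with the numerical constants $17,30,40,120,8,8$ being whatever the bookkeeping yields.
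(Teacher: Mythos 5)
Your plan follows the paper's proof essentially step for step: the same $2\times 2$ block splitting of $I-\bQQbar_k^T\bQQbar_k$, the same identities for the cross term $\bQQbar_{k-1}^T\vQbar_k$ and for $\Tbar_{kk}^T(I-\vQbar_k^T\vQbar_k)\Tbar_{kk}$, and the same crucial step of showing $\norm{\Tbar_{kk}^\inv}=O(1)$ by extracting a lower bound on $\sigmin(\vUbar_k)$ from the identity \eqref{eq:lem:PIPI+:Uk^2} and its quadratic-inequality argument, combined with singular-value perturbation under the projection $I-\bQQbar_{k-1}\bQQbar_{k-1}^T$. The only part left implicit is the explicit constant bookkeeping yielding $17,30,40,120,8,8$, which the paper carries out but which your outline would reproduce.
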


\begin{proof}
    Writing $\bQQbar_k = \bmat{\bQQbar_{k-1} & \vQbar_k}$, we can look at $I - \bQQbar_k^T \bQQbar_k$ block-by-block:
    \begin{equation} \label{eq:thm:PIPI+:IH_block}
        I - \bQQbar_k^T\bQQbar_k =
        \bmat{
            I - \bQQbar_{k-1}^T\bQQbar_{k-1} & \bQQbar_{k-1}^T\vQbar_k \\
            \vQbar_k^T\bQQbar_{k-1} & I - \vQbar_k^T\vQbar_k
        }.
    \end{equation}
    The induction hypothesis takes care of bounding the upper left block.  For the off-diagonals, by~\eqref{eq:lem:PIPI+:Uk}, \eqref{eq:PIPI+:vTk}, \eqref{eq:PIPI+:Pk}, and Lemma~\ref{lem:PIPI+}, we obtain
    \begin{align}
        \norm{\TTbar_{1:k-1,k}}
        & \leq 2 \bigl(\sqrt{1+\omega_{k-1}} + \deltaQU\bigr), \label{eq:thm:PIPI+:vTk_norm} \\
        \norm{\Pbar_k}
        & \leq 4 \bigl(1 + \deltaUU\bigr). \nonumber
    \end{align}
    Using~\eqref{eq:PIPI+:Tkk_chol}, the assumption~\eqref{eq:thm:PIPI+:LOO:assump}, and dropping the quadratic terms, the following bound holds:
    \begin{equation} \label{eq:thm:PIPI+:Tkk}
        \begin{split}
            \norm{\Tbar_{kk}}
            & \leq \sqrt{4 \bigl(1 + \deltaUU\bigr) + 4 \bigl(\sqrt{1+\omega_{k-1}} + \deltaQU\bigr)^2 + 4 \deltaTT + 4 \deltacholtwo} \\
            & \leq \sqrt{\frac{17}{2}} \leq 3.
        \end{split}
    \end{equation}  
    Using \eqref{eq:PIPI+:vTk} and \eqref{eq:PIPI+:Pk}, we can rewrite \eqref{eq:PIPI+:Tkk_chol} as 
    \begin{equation} \label{eq:thm:PIPI+:Tkk^2}
    \begin{split}
        \Tbar_{kk}^T \Tbar_{kk}
        & = \vUbar_k^T (I - \bQQbar_{k-1} \bQQbar_{k-1}^T) (I - \bQQbar_{k-1} \bQQbar_{k-1}^T) \vUbar_k + \DeltaTtil_k, \\
        \norm{\DeltaTtil_k}
        & \leq 4\bigl(\omega_{k-1} + \deltaUU + \deltaTT + \deltacholtwo + 3 \cdot \deltaQU\bigr).
    \end{split}
    \end{equation}
    To bound $\norm{\Tbar_{kk}^\inv}$ we can use \eqref{eq:thm:PIPI+:Tkk^2} to write
    \begin{equation}\label{eq:thm:PIPI+:sigmin_Tkk:1}
        \begin{split}
            \sigmin^2(\Tbar_{kk})
            & \geq \sigmin^2 \bigl( (I - \bQQbar_{k-1} \bQQbar_{k-1}^T) \vUbar_k \bigr) - \norm{\DeltaTtil_k} \\
            & \geq \sigmin^2 \bigl( (I - \bQQbar_{k-1} \bQQbar_{k-1}^T) \vUbar_k \bigr) \\
            & \quad - 4\bigl(\omega_{k-1} + \deltaUU + \deltaTT + \deltacholtwo + 3 \cdot \deltaQU\bigr).
        \end{split}
    \end{equation}
    By~\eqref{eq:lem:PIPI+:UkSkk}, we find that
    \begin{equation} \label{eq:thm:PIPI+:Uk}
        \vUbar_k = \left((I - \bQQbar_{k-1} \bQQbar_{k-1}^T) \vX_k + \DeltavU_k \right) \Sbar_{kk}^\inv.
    \end{equation}
    Multiplying \eqref{eq:thm:PIPI+:Uk} by $I-\bQQbar_{k-1}\bQQbar_{k-1}^T$ on the left yields
    \begin{equation} \label{eq:thm:PIPI+:proj}
        \begin{split}
            (I - \bQQbar_{k-1} \bQQbar_{k-1}^T) \vUbar_k
            & = \vUbar_k - \bQQbar_{k-1} (I - \bQQbar_{k-1}^T \bQQbar_{k-1}) \bQQbar_{k-1}^T \vX_k \Sbar_{kk}^\inv \\
            & \quad - \bQQbar_{k-1} \bQQbar_{k-1}^T \DeltavU_k \Sbar_{kk}^\inv.
        \end{split}
    \end{equation}
    Define $\DeltavE_k := \bQQbar_{k-1} (I - \bQQbar_{k-1}^T \bQQbar_{k-1})\bQQbar_{k-1}^T\vX_k\Sbar_{kk}^\inv + \bQQbar_{k-1} \bQQbar_{k-1}^T\DeltavU_k\Sbar_{kk}^\inv$. Using \eqref{eq:lem:PIPI+:Skkinv}, we then have
    \begin{equation*}
        \norm{\vX_k} \norm{\Sbar_{kk}^\inv}
        \leq \sqrt{2} \cdot \kappa(\bXX_k).
    \end{equation*}
    Together with the induction hypothesis and~\eqref{eq:lem:PIPI+:Uk}, we can bound $\norm{\DeltavE_k}$ by
    \begin{equation} \label{eq:thm:PIPI+:DeltaEk}
        \norm{\DeltavE_k}
        \leq \sqrt{2} \bigl(\omega_{k-1} + 2 \cdot \deltaU + \sqrt{2} \deltaQX + \deltaQS\bigr) \kappa(\bXX_k).
    \end{equation}
    Via \eqref{eq:lem:PIPI+:Uk^2}, \eqref{eq:lem:PIPI+:Uk_quad}, \eqref{eq:thm:PIPI+:proj}, and \eqref{eq:thm:PIPI+:DeltaEk}, we find lower bounds on some key singular values\footnote{again by \cite[Corollary~8.6.2]{GolV13}.}:
    \begin{equation*}
        \begin{split}
            \sigmin^2(\vUbar_k)
            & \geq 1
            - \norm{\Sbar_{kk}^\tinv \vX_k^T \bQQbar_{k-1} (I - \bQQbar_{k-1}^T \bQQbar_{k-1}) \bQQbar_{k-1}^T \vX_k \Sbar_{kk}^\inv} \\
            & \quad - \norm{\Sbar_{kk}^\tinv (\DeltaH_k - \DeltaS_{kk}) \Sbar_{kk}^\inv} \\
            & \geq 1 - 2\bigl(\omega_{k-1} + \deltaSS + \deltaXX + \deltacholone + 6\sqrt{2} \cdot \deltaQX + 4 \cdot \deltaQS \\
            & \quad + 8 \cdot \deltaU \bigr) \kappa^2(\bXX_k),
        \end{split}
    \end{equation*}
    and
    \begin{equation} \label{eq:thm:PIPI+:sigmin_proj}
        \begin{split}
            \sigmin^2((I - \bQQbar_{k-1} & \bQQbar_{k-1}^T) \vUbar_k) \\
            & \geq \left(\sigmin(\vUbar_k) - \norm{\DeltavE_k} \right)^2 \\
            & \geq \sigmin^2(\vUbar_k) - 2\norm{\DeltavE_k} \norm{\vUbar_k} \\
            & \geq \sigmin^2(\vUbar_k) - 4 \sqrt{2} \bigl(\omega_{k-1} + 2 \cdot \deltaU + \sqrt{2}\deltaQX + \deltaQS\bigr) \kappa(\bXX_k)\\
            & \geq 1 - 2\bigl(4 \cdot \omega_{k-1} + \deltaSS + \deltaXX + \deltacholone + 13 \cdot \deltaQX \\
            & \quad + 7 \cdot \deltaQS + 14 \cdot \deltaU \bigr) \kappa^2(\bXX_k).
        \end{split}
    \end{equation}
    Combining \eqref{eq:thm:PIPI+:sigmin_proj} with \eqref{eq:thm:PIPI+:Tkk^2}, \eqref{eq:thm:PIPI+:sigmin_Tkk:1}, and using the fact that $\norm{\Tbar_{kk}^\inv}^2 = \frac{1}{\sigmin^2(\Tbar_{kk})}$, we arrive at
    \begin{equation}\label{eq:thm:PIPI+:Tkkinv_norm}
        \norm{\Tbar_{kk}^\inv} = \sqrt{\norm{\Tbar_{kk}^\inv}^2} \leq \sqrt{2}.
    \end{equation}
    With similar logic as in the derivation of \eqref{eq:PIPI+:Uk}, \eqref{eq:PIPI+:Tkk} gives
    \begin{equation} \label{eq:thm:PIPI+:Qk_CLR21}
        \vQbar_k \Tbar_{kk} = \vWbar_k + \DeltavGtwo_k,
        \quad \norm{\DeltavGtwo_k} \leq \deltaQ \norm{\vQbar_k} \norm{\Tbar_{kk}} \leq 6 \cdot \deltaQ,
    \end{equation}
    where we have applied~\eqref{eq:PIPI+:normQ} and \eqref{eq:thm:PIPI+:Tkk} and simplified the bound.  Multiplying \eqref{eq:thm:PIPI+:Qk_CLR21} by $\bQQbar_{k-1}^T$ on the left yields
    \begin{equation} \label{eq:appen:qqt}
        \bQQbar_{k-1}^T \vQbar_k \Tbar_{kk} = \bQQbar_{k-1}^T\vWbar_k + \bQQbar_{k-1}^T \DeltavGtwo_k.
    \end{equation}
    Then we substitute \eqref{eq:PIPI+:Wk} into \eqref{eq:appen:qqt}, multiply both sides by $\Tbar_{kk}^\inv$, and use \eqref{eq:PIPI+:vTk} to obtain
    \begin{equation*}
        \bQQbar_{k-1}^T\vQbar_k
        = (I - \bQQbar_{k-1}^T\bQQbar_{k-1}) \TT_{1:k-1,k} \Tbar_{kk}^\inv
        + \left(\DeltavT_k + \bQQbar_{k-1}^T(\DeltavGtwo_k + \DeltavW_k) \right) \Tbar_{kk}^\inv.
    \end{equation*}
    Combining \eqref{eq:thm:PIPI+:LOO}, \eqref{eq:lem:PIPI+:Uk}, Lemma~\ref{lem:PIPI+}, and bounds \eqref{eq:PIPI+:vTk}, \eqref{eq:thm:PIPI+:Tkkinv_norm}, \eqref{eq:thm:PIPI+:Qk_CLR21}, and \eqref{eq:PIPI+:Wk} leads to
    \begin{equation} \label{eq:thm:PIPI+:off_diag}
        \begin{split} 
            \norm{\bQQbar_{k-1}^T\vQbar_k}
            & \leq 4 \cdot \omega_{k-1} + 2 \sqrt{2} \cdot \deltaQU + 12 \cdot \deltaQ + 4 \cdot \deltaQT.
        \end{split}
    \end{equation}

    To bound the bottom right entry of \eqref{eq:thm:PIPI+:IH_block}, we combine \eqref{eq:PIPI+:Tkk_chol} and \eqref{eq:PIPI+:Tkk} and note that
    \begin{equation} \label{eq:thm:PIPI+:Hk}
        \begin{split}
            \Tbar_{kk}^T (I - \vQbar_k^T\vQbar_k) \Tbar_{kk}
            & = \Tbar_{kk}^T \Tbar_{kk} - \Tbar_{kk}^T \vQbar_k^T \vQbar_k \Tbar_{kk} \\
            & = \Pbar_k - \TTbar_{1:k-1,k}^T \TTbar_{1:k-1,k} + \DeltaFtwo_k + \DeltaCtwo_k \\
            & \quad - (\vWbar_k - \DeltavGtwo_k)^T (\vWbar_k - \DeltavGtwo_k) \\
            & = \Pbar_k - \TTbar_{1:k-1,k}^T \TTbar_{1:k-1,k} - \vWbar_k^T \vWbar_k \\
            & \quad - \underbrace{\vWbar_k^T \DeltavGtwo_k - (\DeltavGtwo_k)^T \vWbar_k + \DeltaFtwo_k + \DeltaCtwo_k}_{=: \DeltaL_k}.
        \end{split}
    \end{equation}
    Further substituting \eqref{eq:PIPI+:Pk} and \eqref{eq:PIPI+:Wk} into \eqref{eq:thm:PIPI+:Hk} simplifies to
    \begin{equation} \label{eq:thm:PIPI+:Jk}
        \begin{split}
            \Tbar_{kk}^T (I - \vQbar_k^T\vQbar_k) \Tbar_{kk}
            & = - \TTbar_{1:k-1,k}^T \TTbar_{1:k-1,k} + \vUbar_k^T \bQQbar_{k-1} \TTbar_{1:k-1,k} \\
            & \quad + \TTbar_{1:k-1,k}^T \bQQbar_{k-1}^T \vUbar_k - \TTbar_{1:k-1,k}^T \bQQbar_{k-1}^T \bQQbar_{k-1} \TTbar_{1:k-1,k} \\
            & \quad - \DeltaJ_k - \DeltaL_k, 
        \end{split}
    \end{equation}
    where $\DeltaJ_k := \bigl(\vUbar_k - \bQQbar_{k-1} \TTbar_{1:k-1,k}\bigr)^T \DeltavW_k - \bigl(\DeltavW_k\bigr)^T \bigl(\vUbar_k - \bQQbar_{k-1} \TTbar_{1:k-1,k}\bigr)$. One more substitution of \eqref{eq:PIPI+:vTk} into \eqref{eq:thm:PIPI+:Jk} and further simplification yields
    \begin{equation} \label{eq:thm:PIPI+:Tkk*LOO*Tkk}
        \begin{split}
            \Tbar_{kk}^T (I - \vQbar_k^T\vQbar_k) \Tbar_{kk}
            & = \TTbar_{1:k-1,k}^T \bigl(I - \bQQbar_{k-1}^T \bQQbar_{k-1} \bigr) \TTbar_{1:k-1,k} \\
            & \quad - \bigl(\DeltavT_k \bigr)^T \TTbar_{1:k-1,k} - \TTbar_{1:k-1,k}^T \DeltavT_k - \DeltaJ_k - \DeltaL_k.
        \end{split}
    \end{equation}
    Multiplying \eqref{eq:thm:PIPI+:Tkk*LOO*Tkk} by $\Tbar_{kk}^\tinv$ on the left and $\Tbar_{kk}^\inv$ on the right, taking norms, and applying nearly all previous bounds along with \eqref{eq:PIPI+:LOO:k-1} leads to
    \begin{equation} \label{eq:thm:PIPI+:bottom_right}
        \begin{split}
            \norm{I - \vQbar_k^T \vQbar_k}
            & \leq 8 \cdot \omega_{k-1} + 24 \cdot \deltaQU + 32 \cdot \deltaQT \\
            & \quad + 96 \cdot \deltaQ + 8 \cdot \deltaTT + 8 \cdot \deltacholtwo.
        \end{split}
    \end{equation}
    
    Finally, using \eqref{eq:thm:PIPI+:IH_block} along with \eqref{eq:PIPI+:LOO:k-1} and the bounds \eqref{eq:thm:PIPI+:off_diag} and \eqref{eq:thm:PIPI+:bottom_right}, we see that
    \begin{equation*}
        \begin{split}
            \norm{I - \bQQbar_k^T\bQQbar_k}
            & = \norm{
                \bmat{
                    I - \bQQbar_{k-1}^T\bQQbar_{k-1} & \bQQbar_{k-1}^T\vQbar_k \\
                    \vQbar_k^T\bQQbar_{k-1} & I - \vQbar_k^T\vQbar_k
                }
            } \\
            & \leq \norm{
                \bmat{
                    \norm{I-\bQQbar_{k-1}^T\bQQbar_{k-1}} & \norm{\bQQbar_{k-1}^T\vQbar_k} \\
                    \norm{\vQbar_k^T\bQQbar_{k-1}} & \norm{I-\vQbar_k^T\vQbar_k}
                }
            } \\
            & \leq \normF{
                \bmat{
                    \norm{I-\bQQbar_{k-1}^T\bQQbar_{k-1}} & \norm{\bQQbar_{k-1}^T\vQbar_k} \\
                    \norm{\vQbar_k^T\bQQbar_{k-1}} & \norm{I-\vQbar_k^T\vQbar_k}
                }
            } \\
            & \leq \norm{I-\bQQbar_{k-1}^T \bQQbar_{k-1}} + 2 \norm{\bQQbar_{k-1}^T \vQbar_k}
            + \norm{I - \vQbar_k^T \vQbar_k} \\
            & \leq 17 \cdot \omega_{k-1} + 30 \cdot \deltaQU
            + 40 \cdot \deltaQT + 120 \cdot \deltaQ + 8 \cdot \deltaTT + 8 \cdot \deltacholtwo,
        \end{split}
    \end{equation*}
    where we have used \cite[P.15.50]{GarH17} as in \cite[Theorem~3.1]{CarLR21}.
\end{proof}

If we assume a uniform working precision with unit roundoff $\eps$ and apply standard floating-point point analysis~\cite{Hig02} to Theorem~\ref{thm:PIPI+:LOO}, it is not hard to show that
\[
    \deltaQX, \deltaXX, \deltaSS, \deltaQS, \deltaU, \deltaQU, \deltaUU, \deltaTT, \deltaQT, \deltaQ \leq \bigO{\eps}
\]
and
\[
    \deltacholone, \deltacholtwo \leq \bigO{\eps}.
\]

In contrast to Corollary~\ref{cor:PIP+:LOO}, we must impose a LOO condition on $\IOnoarg$; \HouseQR \cite{Hig02}, \TSQR \cite{MorYZ12}, \texttt{CholQR++} \cite{YamNYetal15}, or \ShCholQRRORO \cite{FukKNetal20} should satisfy the requirement, but only \TSQR could do so and still maintain a single sync point for the \IOnoarg~\cite{BalDGetal15}\footnote{Note that TSQR can be performed as a single reduction, but the resulting $\vQ$ factor is implicitly represented in a tree-based  format; if the Householder representation of the $\vQ$ factor is desired, a second reduction is required.}.  The following corollaries summarize bounds for \BCGSPIPIRO in uniform precision.

\begin{corollary} \label{cor:PIPI+:LOO}
    Assume that $\bigO{\eps} \kappa^2(\bXX) \leq \frac{1}{2}$ and that for all $\vX \in \spR^{m \times s}$ with $\kappa(\vX) \leq \kappa(\bXX)$, $[\vQbar, \Rbar] = \IO{\vX}$ satisfy
    \begin{align*}
        \Rbar^T \Rbar
        & = \vX^T \vX + \DeltaE, \quad \norm{\DeltaE} \leq \bigO{\eps} \norm{\vX}^2 \\ 
        \vQbar \Rbar
        & = \vX + \DeltavD, \quad \norm{\DeltavD} \leq \bigO{\eps} \norm{\vX} \mbox{ and} \\
        \norm{I - \vQbar^T \vQbar}
        & \leq \frac{\bigO{\eps}}{1 - \bigO{\eps} \kappa^2(\vX)}. 
    \end{align*}
    Assume also that $[\bQQbar, \RR] = \BCGSPIPIRO(\bXX, \IOnoarg)$ and for all $k \in \{3, \ldots, p\}$,
    \begin{equation*}
        \bQQbar_{k-1} \RRbar_{k-1} = \bXX_{k-1} + \DeltabDD_{k-1},
        \quad \norm{\DeltabDD_{k-1}}
        \leq \bigO{\eps} \norm{\bXX_{k-1}}.
    \end{equation*}
    Then for all $k \in \{1, \ldots, p\}$, $\bQQbar_k$ satisfies
    \begin{equation} \label{eq:cor:PIPI+:LOO}
        \norm{I - \bQQbar_k^T \bQQbar_k}
        \leq \frac{\bigO{\eps}}{1 - \bigO{\eps} \kappa^2(\bXX_k)}
        \leq \bigO{\eps}.
    \end{equation}
\end{corollary}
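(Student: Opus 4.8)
The plan is to prove \eqref{eq:cor:PIPI+:LOO} by induction on $k$, feeding the per-iteration estimate of Theorem~\ref{thm:PIPI+:LOO} into the induction and using the global hypothesis $\bigO{\eps}\kappa^2(\bXX)\leq\tfrac12$ to certify, at every step, that the side-conditions of that theorem (and of Lemmas~\ref{lem:PIPI+:SPD}--\ref{lem:PIPI+}) are satisfied. As a preliminary, I would record that in uniform precision the standard analysis of \cite{Hig02} makes all of $\deltaQX,\deltaXX,\deltaSS,\deltaQS,\deltaU,\deltaQU,\deltaUU,\deltaTT,\deltaQT,\deltaQ$ and $\deltacholone,\deltacholtwo$ equal to $\bigO{\eps}$ (as noted just before the statement), and that the residual constant $\deltaX$ in \eqref{eq:lem:PIPI+:SPD:res}/\eqref{eq:lem:PIPI+:res} is $\bigO{\eps}$: for $k=2$ because $\bQQbar_1\RRbar_1=\vQbar_1\Rbar_{11}$ and the assumed \IOnoarg residual bound applies, and for $k\geq 3$ by the hypothesis on $\bQQbar_{k-1}\RRbar_{k-1}$. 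Write $\omega_k:=\norm{I-\bQQbar_k^T\bQQbar_k}$.

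For the base case $k=1$, $\bQQbar_1=\vQbar_1$ with $[\vQbar_1,\Rbar_{11}]=\IO{\vX_1}$; since $\kappa(\vX_1)=\kappa(\bXX_1)\leq\kappa(\bXX)$ by \cite[Cor.~8.6.3]{GolV13}, the assumed \IOnoarg loss-of-orthogonality bound gives $\omega_1\leq\bigO{\eps}/(1-\bigO{\eps}\kappa^2(\bXX_1))$, which is \eqref{eq:cor:PIPI+:LOO} for $k=1$. For the inductive step, fix $k\in\{2,\ldots,p\}$ and assume $\omega_{k-1}\leq\bigO{\eps}/(1-\bigO{\eps}\kappa^2(\bXX_{k-1}))$. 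Because $\kappa(\bXX_{k-1})\leq\kappa(\bXX)$, the global hypothesis gives $\bigO{\eps}\kappa^2(\bXX_{k-1})\leq\tfrac12$, hence $\omega_{k-1}\leq 2\bigO{\eps}=\bigO{\eps}\in(0,1)$; in particular \eqref{eq:PIPI+:LOO:k-1} holds with a small $\omega_{k-1}$. I would then check the remaining hypotheses in turn: the rounding-error relations \eqref{eq:PIPI+:vSk}--\eqref{eq:PIPI+:Tkk} hold with $\bigO{\eps}$ constants; the two Cholesky factorizations are well defined by Lemma~\ref{lem:PIPI+:SPD} and the remark following Lemma~\ref{lem:PIPI+}, whose conditioning requirements \eqref{eq:lem:PIPI+:SPD:kappa} and \eqref{eq:lem:PIPI+:kappa} reduce — after bounding $\omega_{k-1}$ and all $\delta$'s by $\bigO{\eps}$ and using $\kappa^2(\vX_k),\kappa^2(\bXX_k)\leq\kappa^2(\bXX)$ — to $\bigO{\eps}\kappa^2(\bXX)\leq\tfrac12$; the condition \eqref{eq:lem:PIPI+:delUkappa} holds since $8\deltaU\kappa^2(\bXX)\leq\bigO{\eps}\kappa^2(\bXX)\leq\tfrac12$; and finally the main hypothesis \eqref{eq:thm:PIPI+:LOO:assump}, whose left-hand side is $2(4\omega_{k-1}+\bigO{\eps})\kappa^2(\bXX)+4(\omega_{k-1}+\bigO{\eps})\leq\bigO{\eps}\kappa^2(\bXX)+\bigO{\eps}\leq\bigO{\eps}\kappa^2(\bXX)\leq\tfrac12$ (using $\kappa(\bXX)\geq 1$).

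With all hypotheses verified, \eqref{eq:thm:PIPI+:LOO} yields
\[
    \omega_k\leq 17\,\omega_{k-1}+30\deltaQU+40\deltaQT+120\deltaQ+8\deltaTT+8\deltacholtwo\leq 17\,\omega_{k-1}+\bigO{\eps}.
\]
Then, invoking the induction hypothesis together with $\kappa(\bXX_{k-1})\leq\kappa(\bXX_k)$ — so that $1-\bigO{\eps}\kappa^2(\bXX_{k-1})\geq 1-\bigO{\eps}\kappa^2(\bXX_k)\geq\tfrac12>0$ — and folding the factor $17$ and the additive $\bigO{\eps}$ into the dimensional polynomial,
\[
    \omega_k\leq\frac{17\,\bigO{\eps}}{1-\bigO{\eps}\kappa^2(\bXX_{k-1})}+\bigO{\eps}\leq\frac{\bigO{\eps}}{1-\bigO{\eps}\kappa^2(\bXX_k)}\leq\bigO{\eps},
\]
which closes the induction and establishes \eqref{eq:cor:PIPI+:LOO}.

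The proof is essentially bookkeeping once Theorem~\ref{thm:PIPI+:LOO} and Lemmas~\ref{lem:PIPI+:SPD}--\ref{lem:PIPI+} are in hand, so I expect the only delicate points to be organizational: (i) the per-step estimate carries a factor $17$ in front of $\omega_{k-1}$, so one must bound $\omega_{k-1}$ a priori by $\bigO{\eps}$ from the induction hypothesis (a consequence of the global conditioning assumption, not of unrolling the recursion) before absorbing constants; and (ii) each of the several conditioning side-conditions of the lemmas must be shown to follow from the single hypothesis $\bigO{\eps}\kappa^2(\bXX)\leq\tfrac12$, which requires systematically using $\kappa(\bXX_k)\leq\kappa(\bXX)$ and $\omega_{k-1},\delta_*\leq\bigO{\eps}$.
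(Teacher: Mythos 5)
Your proposal is correct and follows essentially the same route as the paper: induction on $k$, with the base case supplied by the \IOnoarg assumptions and the inductive step supplied by Theorem~\ref{thm:PIPI+:LOO}, whose side conditions (and those of Lemmas~\ref{lem:PIPI+:SPD}--\ref{lem:PIPI+}) are verified from the single hypothesis $\bigO{\eps}\kappa^2(\bXX)\leq\tfrac12$, after which the factor $17$ and additive terms are absorbed into $\bigO{\eps}$ exactly as the paper does. The only cosmetic difference is how the denominator $1-\bigO{\eps}\kappa^2(\bXX_k)$ appears: you obtain it via $\kappa(\bXX_{k-1})\leq\kappa(\bXX_k)$ and constant absorption, whereas the paper re-derives it by replacing the bound $\norm{\Tbar_{kk}^\inv}\leq\sqrt{2}$ in \eqref{eq:thm:PIPI+:Tkkinv_norm} with the sharper bound \eqref{eq:cor:PIPI+:LOO:Tkkinv_norm}; both are legitimate within the paper's $\bigO{\eps}$ conventions.
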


\begin{proof}
    First note that $\bigO{\eps} \kappa^2(\bXX) \leq \frac{1}{2}$ implies that for all $k \in \{1, \ldots, p\}$,\\$\bigO{\eps} \kappa^2(\bXX_k) \leq \frac{1}{2}$. Then for the base case, the assumptions on $\IOnoarg$ directly give
    \[
        \norm{I - \bQQbar_1^T \bQQbar_1}
        = \norm{I - \vQbar_1^T \vQbar_1}
        \leq \frac{\bigO{\eps}}{1 - \bigO{\eps} \kappa^2(\vX_1)}
        = \frac{\bigO{\eps}}{1 - \bigO{\eps} \kappa^2(\bXX_1)},
    \]
    which also means that $\omega_1 \leq \frac{\bigO{\eps}}{1 - \bigO{\eps} \kappa^2(\bXX_1)}$. Now assume that \eqref{eq:cor:PIPI+:LOO} holds for all $j \in \{1,\ldots, k-1\}$, i.e., $\omega_{k-1} \leq \frac{\bigO{\eps}}{1 - \bigO{\eps} \kappa^2(\bXX_{k-1})}$. Noting that the assumption $\bigO{\eps} \kappa^2(\bXX) \leq \frac{1}{2}$ can guarantee~\eqref{eq:thm:PIPI+:LOO:assump}, Theorem~\ref{thm:PIPI+:LOO} proves that \eqref{eq:thm:PIPI+:LOO} holds for $k$.
    
    Note that it seems that we prove that $\norm{I - \bQQbar_k^T \bQQbar_k} \leq \frac{\bigO{\eps}}{1 - \bigO{\eps} \kappa^2(\bXX_{k-1})}$, which is because we omitted $\frac{1}{1 - \bigO{\eps} \kappa^2(\bXX_{k-1})}$ for simplicity using the assumption $\bigO{\eps} \kappa^2(\bXX) \leq \frac{1}{2}$ in~\eqref{eq:thm:PIPI+:Tkkinv_norm}. Replacing~\eqref{eq:thm:PIPI+:Tkkinv_norm} in the proof of Theorem~\ref{thm:PIPI+:LOO} with
    \begin{equation} \label{eq:cor:PIPI+:LOO:Tkkinv_norm}
        \norm{\Tbar_{kk}^\inv}
        = \sqrt{\norm{\Tbar_{kk}^\inv}^2}
        \leq \sqrt{\frac{1}{1 - \bigO{\eps} \kappa^2(\bXX_k)}}
        \leq \frac{1}{1 - \bigO{\eps} \kappa^2(\bXX_k)},
    \end{equation}
    we can draw the conclusion.
\end{proof}

A key takeaway from Corollary~\ref{cor:PIPI+:LOO}, in particular the bound~\eqref{eq:thm:PIPI+:LOO}, is that the LOO of \BCGSPIPIRO depends on the conditioning of $\bXX$.  We have made the rather artificial assumption that $\bigO{\eps} \kappa(\bXX) \leq \frac{1}{2}$.  Of course a constant closer to $1$ could be used instead, and it would become clear that the constant on $\bigO{\eps}$ can grow arbitrarily large the closer we let $\kappa(\bXX)$ get to $\frac{1}{\sqrt{\eps}}$.  In practice, this edge-case behavior can be quite dramatic, which examples in Section~\ref{sec:experiments} demonstrate.

We close this section by bounding both the standard residual \eqref{eq:res} and Cholesky residual~\eqref{eq:chol_res} for Algorithm~\ref{alg:PIPI+} in uniform precision.

\begin{corollary} \label{cor:PIPI+:res}
    Assume that $\bigO{\eps} \kappa^2(\bXX) \leq \frac{1}{2}$ and that for all $\vX \in \spR^{m \times s}$ with $\kappa(\vX) \leq \kappa(\bXX)$, $[\vQbar, \Rbar] = \IO{\vX}$ satisfy
    \begin{align*}
        \vQbar \Rbar &= \vX + \DeltavD, \quad \norm{\DeltavD} \leq \bigO{\eps} \norm{\vX}
        \mbox{ and }
        \norm{I - \vQbar^T\vQbar} \leq \frac{\bigO{\eps}}{1 - \bigO{\eps} \kappa^2(\vX)}. 
    \end{align*}
    Then for $[\bQQbar, \RR] = \BCGSPIPIRO(\bXX, \IOnoarg)$ and all $k \in \{1, \ldots, p\}$,
    \[
        \bQQbar_k \RRbar_k = \bXX_k + \DeltabDD_k,
        \quad \norm{\DeltabDD_k}
        \leq \bigO{\eps}\norm{\bXX_k}.
    \]
\end{corollary}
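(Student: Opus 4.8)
The plan is an induction on $k$ that establishes, at each level, the block‑column decomposition $\DeltabDD_k = \bmat{\DeltabDD_{k-1} & \vd_k}$, where $\vd_k$ is the residual contribution of the $k$th block column, and then shows $\norm{\vd_k} \le \bigO{\eps}\norm{\vX_k}$. For the base case $k=1$, the vector $\vX_1$ has $\kappa(\vX_1)\le\kappa(\bXX)$, so the \IOnoarg\ residual hypothesis gives directly $\vQbar_1\Rbar_{11} = \vX_1 + \DeltavD_1$ with $\norm{\DeltavD_1}\le\bigO{\eps}\norm{\vX_1}=\bigO{\eps}\norm{\bXX_1}$, i.e.\ $\DeltabDD_1=\DeltavD_1$.

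For the inductive step, fix $k\in\{2,\dots,p\}$ and assume the claim for $k-1$. The hypotheses here are exactly those under which Corollary~\ref{cor:PIPI+:LOO} applies (its residual‑recursion input for indices below $k$ is our induction hypothesis), so we may use $\norm{I-\bQQbar_j^T\bQQbar_j}\le\bigO{\eps}$ for all $j\le k$, hence $\norm{\bQQbar_{k-1}},\norm{\vQbar_k}\le\sqrt2$, together with all intermediate bounds from Lemma~\ref{lem:PIPI+} and the proof of Theorem~\ref{thm:PIPI+:LOO}: in particular $\norm{\vUbar_k}\le 2$ (eq.~\eqref{eq:lem:PIPI+:Uk}), $\norm{\Sbar_{kk}}\le 3\norm{\vX_k}$ (eq.~\eqref{eq:lem:PIPI+:Skk}), $\norm{\SSbar_{1:k-1,k}}\le(\sqrt2+\bigO\eps)\norm{\vX_k}$, $\norm{\TTbar_{1:k-1,k}}\le\bigO{1}$, and $\norm{\Tbar_{kk}}\le 3$ (eq.~\eqref{eq:thm:PIPI+:Tkk}). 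Partitioning $\bQQbar_k\RRbar_k = \bmat{\bQQbar_{k-1}\RRbar_{k-1} & \bQQbar_{k-1}\RRbar_{1:k-1,k}+\vQbar_k\Rbar_{kk}}$, the first $k-1$ block columns equal $\bXX_{k-1}+\DeltabDD_{k-1}$ by the induction hypothesis, so it remains to analyze the last block column.

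The floating‑point evaluations of the lines $\RR_{1:k-1,k}=\SS_{1:k-1,k}+\TT_{1:k-1,k}S_{kk}$ and $R_{kk}=T_{kk}S_{kk}$ in Algorithm~\ref{alg:PIPI+} give $\RRbar_{1:k-1,k}=\SSbar_{1:k-1,k}+\TTbar_{1:k-1,k}\Sbar_{kk}+\DeltavR'_k$ and $\Rbar_{kk}=\Tbar_{kk}\Sbar_{kk}+\DeltavR''_k$ with $\norm{\DeltavR'_k},\norm{\DeltavR''_k}\le\bigO{\eps}\norm{\vX_k}$ after the norm bounds above. Substituting these into $\bQQbar_{k-1}\RRbar_{1:k-1,k}+\vQbar_k\Rbar_{kk}$ and then successively applying \eqref{eq:PIPI+:Tkk} ($\vQbar_k\Tbar_{kk}=\vWbar_k+\DeltavGtwo_k$), \eqref{eq:PIPI+:Wk} ($\vWbar_k=\vUbar_k-\bQQbar_{k-1}\TTbar_{1:k-1,k}+\DeltavW_k$), \eqref{eq:PIPI+:Uk} ($\vUbar_k\Sbar_{kk}=\vVbar_k+\DeltavGone_k$), and \eqref{eq:PIPI+:Vk} ($\vVbar_k=\vX_k-\bQQbar_{k-1}\SSbar_{1:k-1,k}+\DeltavV_k$), the terms $\bQQbar_{k-1}\TTbar_{1:k-1,k}\Sbar_{kk}$ and $\bQQbar_{k-1}\SSbar_{1:k-1,k}$ cancel, leaving
\[
    \bQQbar_{k-1}\RRbar_{1:k-1,k}+\vQbar_k\Rbar_{kk}
    = \vX_k + \DeltavV_k + \DeltavGone_k + (\DeltavW_k+\DeltavGtwo_k)\Sbar_{kk} + \bQQbar_{k-1}\DeltavR'_k + \vQbar_k\DeltavR''_k =: \vX_k + \vd_k .
\]

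Finally, bound $\vd_k$ term by term: $\norm{\DeltavV_k}\le\deltaQS\norm{\vX_k}$; $\norm{\DeltavGone_k}\le\deltaU\norm{\vUbar_k}\norm{\Sbar_{kk}}\le 6\deltaU\norm{\vX_k}$; $\norm{\DeltavW_k\Sbar_{kk}}\le 3\deltaQT\norm{\vUbar_k}\norm{\vX_k}\le 6\deltaQT\norm{\vX_k}$; $\norm{\DeltavGtwo_k\Sbar_{kk}}\le 3\deltaQ\norm{\vQbar_k}\norm{\Tbar_{kk}}\norm{\vX_k}\le 9\sqrt2\,\deltaQ\norm{\vX_k}$; and $\norm{\bQQbar_{k-1}\DeltavR'_k}+\norm{\vQbar_k\DeltavR''_k}\le\bigO{\eps}\norm{\vX_k}$. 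Since every $\delta_*\le\bigO{\eps}$, we get $\norm{\vd_k}\le\bigO{\eps}\norm{\vX_k}$, whence $\norm{\DeltabDD_k}\le\norm{\DeltabDD_{k-1}}+\norm{\vd_k}\le\bigO{\eps}\norm{\bXX_{k-1}}+\bigO{\eps}\norm{\vX_k}\le\bigO{\eps}\norm{\bXX_k}$, closing the induction. The only genuine subtlety is the interlock with the loss‑of‑orthogonality analysis: bounding $\norm{\bQQbar_{k-1}}$, $\norm{\vQbar_k}$, $\norm{\vUbar_k}$, $\norm{\Sbar_{kk}}$, and $\norm{\Tbar_{kk}}$ relies on Lemma~\ref{lem:PIPI+}, Lemma~\ref{lem:PIPI+:SPD}, and Theorem~\ref{thm:PIPI+:LOO}, whose hypotheses in turn include the residual recursion — so both inductions must be run in lockstep (equivalently, one checks that the hypotheses of Corollaries~\ref{cor:PIPI+:LOO} and~\ref{cor:PIPI+:res} hold simultaneously at each level). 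Beyond that, the one piece of bookkeeping that matters is tracking $\norm{\Sbar_{kk}}=\bigO{1}\norm{\vX_k}$, so that the second‑orthogonalization errors — which carry an extra factor of $\Sbar_{kk}$ — stay at the level $\bigO{\eps}\norm{\vX_k}$ rather than picking up a spurious $\kappa(\bXX_k)$.
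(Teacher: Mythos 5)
Your proof is correct and follows essentially the same route as the paper's: induction over block columns, floating-point error bounds on the assembly lines $\RR_{1:k-1,k}=\SS_{1:k-1,k}+\TT_{1:k-1,k}S_{kk}$ and $R_{kk}=T_{kk}S_{kk}$, unrolling \eqref{eq:PIPI+:Tkk}, \eqref{eq:PIPI+:Wk}, \eqref{eq:PIPI+:Uk}, \eqref{eq:PIPI+:Vk} so the $\bQQbar_{k-1}\TTbar_{1:k-1,k}\Sbar_{kk}$ and $\bQQbar_{k-1}\SSbar_{1:k-1,k}$ terms cancel, and invoking the norm bounds from Lemma~\ref{lem:PIPI+} and Theorem~\ref{thm:PIPI+:LOO} (the paper expands $\SSbar_{1:k-1,k}$ via \eqref{eq:PIPI+:vSk} first, which only makes the cancellation slightly less direct). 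Your remark about running the residual and loss-of-orthogonality inductions in lockstep matches the paper's own justification that using Lemma~\ref{lem:PIPI+} with the $(k-1)$st-step residual hypothesis is not circular.
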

\begin{proof}
    By the assumption on $\IOnoarg$, we have for $k=1$,
    \[
        \bQQbar_1 \RRbar_1 = \bXX_1 + \DeltabDD_1,
        \quad \norm{\DeltabDD_1}
        \leq \bigO{\eps} \norm{\bXX_1}.
    \]
    Now we assume that for all $j \in \{2, \ldots, k-1\}$, it holds that
    \begin{equation}
        \bQQbar_j \RRbar_j = \bXX_j + \DeltabDD_j,
        \quad \norm{\DeltabDD_j}
        \leq \bigO{\eps} \norm{\bXX_j}.
    \end{equation}
    For $k$, it then follows that
    \begin{equation} \label{eq:cor:PIPI+:res:Delta_Xk}
        \begin{split}
            \DeltabDD_k
            & := \bmat{\DeltabDD_{k-1} & \DeltavX_k} \\
            & = \bQQbar_k \RRbar_k - \bXX_k \\
            & = \bmat{
                \bQQbar_{k-1} \RRbar_{k-1} - \bXX_{k-1}
                & \bQQbar_{k-1} \RRbar_{1:k-1,k} + \vQbar_k \Rbar_{kk} - \vX_k
                }.
        \end{split}
    \end{equation}
    The first element of \eqref{eq:cor:PIPI+:res:Delta_Xk} is taken care of by the induction hypothesis.  As for the second element, we treat $\bQQbar_{k-1} \RRbar_{1:k-1,k}$ and $\vQbar_k \Rbar_{kk}$ separately.  There exists $\DeltavR_k$ such that
    \begin{equation} \label{eq:cor:PIPI+:res:vRk}
    	\RRbar_{1:k-1,k} = \SSbar_{1:k-1,k} + \TTbar_{1:k-1,k} \Sbar_{kk} +\DeltavR_k,
    \end{equation}
    \begin{equation} \label{eq:cor:PIPI+:res:DeltavRk}
        \norm{\DeltavR_k}
        \leq \bigO{\eps}(\norm{\SSbar_{1:k-1,k}} + \norm{\TTbar_{1:k-1,k}} \norm{\Sbar_{kk}})
        \leq \bigO{\eps}\norm{\vX_k},
    \end{equation}
    by~\eqref{eq:lem:PIPI+:Omegak_norm_vSk_norm}, \eqref{eq:thm:PIPI+:vTk_norm}, and Lemma~\ref{lem:PIPI+}.  Combining~\eqref{eq:cor:PIPI+:res:vRk}, \eqref{eq:cor:PIPI+:res:DeltavRk}, and ~\eqref{eq:PIPI+:vSk}, it follows that
    \begin{equation} \label{eq:cor:PIPI+:res:QvRk}
        \begin{split}
            \bQQbar_{k-1} \RRbar_{1:k-1,k}
            & = \bQQbar_{k-1} \left( \SSbar_{1:k-1,k} + \TTbar_{1:k-1,k} \Sbar_{kk} +\DeltavR_k \right)\\
            & = \bQQbar_{k-1}\left( \bQQbar_{k-1}^T \vX_k + \DeltavS_k +\TTbar_{1:k-1,k} \Sbar_{kk} + \DeltavR_k \right)\\
            & = \bQQbar_{k-1} \bQQbar_{k-1}^T \vX_k + \bQQbar_{k-1} \TTbar_{1:k-1,k} \Sbar_{kk} + \bQQbar_{k-1} \left( \DeltavS_k + \DeltavR_k \right),
        \end{split}
    \end{equation}
    with
    \begin{equation*}
    	\norm{\bQQbar_{k-1} \left(\DeltavS_k + \DeltavR_k \right)}
    	\leq \bigO{\eps}\norm{\vX_k}.
    \end{equation*}
    From line~\ref{line:PIPI+:Rkk} of Algorithm~\ref{alg:PIPI+}, \eqref{eq:lem:PIPI+:Skk}, and \eqref{eq:thm:PIPI+:Tkk}, we can write 
    \begin{equation} \label{eq:thm:PIPI+:Rkk}
    	\Rbar_{kk} = \Tbar_{kk} \Sbar_{kk} +\DeltaR_{kk}, \quad \norm{\DeltaR_{kk}} \leq \bigO{\eps} \norm{\vX_k}.
    \end{equation}
    Plugging \eqref{eq:PIPI+:Vk}, \eqref{eq:PIPI+:Uk}, \eqref{eq:PIPI+:Wk}, and \eqref{eq:PIPI+:Tkk} into \eqref{eq:thm:PIPI+:Rkk}, the term $\vQbar_k \Rbar_{kk}$ can be rewritten as
    \begin{equation} \label{eq:cor:PIPI+:res:QRkk}
        \begin{split}
            \vQbar_k \Rbar_{kk}
            & = \vQbar_k \left(\Tbar_{kk} \Sbar_{kk} + \DeltaR_{kk} \right) \\
            & = \vQbar_k \Tbar_{kk} \Sbar_{kk} + \vQbar_k \DeltaR_{kk} \\
            & = \left(\vWbar_k + \DeltavGtwo_k \right) \Sbar_{kk} + \vQbar_k \DeltaR_{kk}\\
            & = \left(\vUbar_k - \bQQbar_{k-1} \TTbar_{1:k-1,k}
            + \DeltavW_k + \DeltavGtwo_k \right) \Sbar_{kk} + \vQbar_k \DeltaR_{kk}\\
            & = \vUbar_k \Sbar_{kk} - \bQQbar_{k-1} \TTbar_{1:k-1,k} \Sbar_{kk}
            + \left(\DeltavW_k + \DeltavGtwo_k\right) \Sbar_{kk} + \vQbar_k \DeltaR_{kk}\\
            & = \vX_k - \bQQbar_{k-1} \bQQbar_{k-1}^T \vX_k
            + \DeltavV_k + \DeltavGone_k - \bQQbar_{k-1} \DeltaS_k - \bQQbar_{k-1}\TTbar_{1:k-1,k} \Sbar_{kk} \\
            & \qquad + \left(\DeltavW_k + \DeltavGtwo_k\right) \Sbar_{kk} + \vQbar_k \DeltaR_{kk},
        \end{split}
    \end{equation}
    with 
    \begin{equation*}
        \norm{\DeltavV_k + \DeltavGone_k - \bQQbar_{k-1} \DeltaS_k + \left(\DeltavW_k + \DeltavGtwo_k\right) \Sbar_{kk} + \vQbar_k \DeltaR_{kk}} \leq \bigO{\eps} \norm{\vX_k}.
    \end{equation*}
    Combining \eqref{eq:cor:PIPI+:res:QvRk}, \eqref{eq:cor:PIPI+:res:QRkk}, and the induction hypothesis, we can conclude the proof.
\end{proof}

\begin{remark}
    Invoking Lemma~\ref{lem:PIPI+} in the proof above is not circular logic, as it is valid to use with the induction hypothesis (residual in the $k-1$st step), and \eqref{eq:thm:PIPI+:vTk_norm}, \eqref{eq:PIPI+:Wk}, and \eqref{eq:PIPI+:Tkk} follow from standard rounding-error bounds together with Lemma~\ref{lem:PIPI+}.
\end{remark}

The bound on the Cholesky residual follows directly from Theorems~\ref{cor:PIPI+:LOO} and \ref{cor:PIPI+:res}.
\begin{corollary} \label{cor:PIPI+:cholres}
    Assume that $\bigO{\eps} \kappa^2(\bXX) \leq \frac{1}{2}$ and for all $k \in \{1, \ldots, p\}$, $\bQQbar_k$ computed by Algorithm~\ref{alg:PIPI+} satisfies
    \begin{align}
        \norm{I - \bQQbar_k^T \bQQbar_k}
        & \leq \frac{\bigO{\eps}}{1 - \bigO{\eps} \kappa^2(\bXX_k)} \mbox{ and} \label{cor:PIPI+:cholres:LOO} \\
        \bXX_k + \DeltabDD_k
        & = \bQQbar_k \RRbar_k,
        \quad \norm{\DeltabDD_k}
        \leq \bigO{\eps} \norm{\bXX_k}. \label{eq:cor:PIPI+:cholres:res}
    \end{align}
    Then for all $k \in \{1, \ldots, p\}$,
    \begin{equation*}
        \RRbar_k^T \RRbar_k = \bXX_k^T\bXX_k + \DeltaEE_k, \quad \norm{\DeltaEE_k} \leq \bigO{\eps} \norm{\bXX_k}^2.
    \end{equation*}
\end{corollary}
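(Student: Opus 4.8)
The plan is to mimic the proof of \cite[Theorem~3.2]{CarLR21}: expand $\bXX_k^T \bXX_k$ using the residual identity~\eqref{eq:cor:PIPI+:cholres:res} and then reconcile it with $\RRbar_k^T \RRbar_k$ via the LOO bound~\eqref{cor:PIPI+:cholres:LOO}. Writing $\bXX_k = \bQQbar_k \RRbar_k - \DeltabDD_k$, I would compute
\begin{equation*}
    \bXX_k^T \bXX_k
    = \RRbar_k^T \bQQbar_k^T \bQQbar_k \RRbar_k
    - \RRbar_k^T \bQQbar_k^T \DeltabDD_k
    - \DeltabDD_k^T \bQQbar_k \RRbar_k
    + \DeltabDD_k^T \DeltabDD_k,
\end{equation*}
and then substitute $\bQQbar_k^T \bQQbar_k = I + (\bQQbar_k^T \bQQbar_k - I)$ in the first term to isolate
\begin{equation*}
    \DeltaEE_k := \bXX_k^T \bXX_k - \RRbar_k^T \RRbar_k
    = \RRbar_k^T (\bQQbar_k^T \bQQbar_k - I) \RRbar_k
    - \RRbar_k^T \bQQbar_k^T \DeltabDD_k
    - \DeltabDD_k^T \bQQbar_k \RRbar_k
    + \DeltabDD_k^T \DeltabDD_k.
\end{equation*}

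Next I would collect the auxiliary norm estimates needed to bound the four terms. Since $\bigO{\eps}\kappa^2(\bXX) \leq \frac12$, the hypothesis~\eqref{cor:PIPI+:cholres:LOO} gives $\norm{I - \bQQbar_k^T \bQQbar_k} \leq \bigO{\eps}$, whence $\norm{\bQQbar_k} \leq \sqrt{1 + \bigO{\eps}}$ and, by the perturbation theory of singular values~\cite[Corollary~8.6.2]{GolV13}, $\sigmin^2(\bQQbar_k) \geq 1 - \bigO{\eps}$. In particular $\bQQbar_k$ has full column rank, so $\bQQbar_k^\pinv \bQQbar_k = I$ and $\norm{\bQQbar_k^\pinv} \leq (1 - \bigO{\eps})^{\ihalf}$; combined with~\eqref{eq:cor:PIPI+:cholres:res} this yields $\norm{\RRbar_k} = \norm{\bQQbar_k^\pinv(\bXX_k + \DeltabDD_k)} \leq (1 + \bigO{\eps})\norm{\bXX_k} = \bigO{1}\norm{\bXX_k}$.

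Finally I would take norms in the expression for $\DeltaEE_k$ and apply the triangle inequality together with $\norm{\DeltabDD_k} \leq \bigO{\eps}\norm{\bXX_k}$ from~\eqref{eq:cor:PIPI+:cholres:res}:
\begin{equation*}
    \norm{\DeltaEE_k}
    \leq \norm{\RRbar_k}^2 \norm{I - \bQQbar_k^T \bQQbar_k}
    + 2 \norm{\bQQbar_k}\,\norm{\RRbar_k}\,\norm{\DeltabDD_k}
    + \norm{\DeltabDD_k}^2
    \leq \bigO{\eps}\norm{\bXX_k}^2,
\end{equation*}
where the first two terms are each $\bigO{\eps}\norm{\bXX_k}^2$ and the third is a dominated $\bigO{\eps^2}\norm{\bXX_k}^2$ contribution. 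No induction is required here, as all hypotheses are supplied directly by Corollaries~\ref{cor:PIPI+:LOO} and~\ref{cor:PIPI+:res}; the only step needing any care is the bound $\norm{\RRbar_k} \leq \bigO{1}\norm{\bXX_k}$, i.e.\ the full-column-rank claim for $\bQQbar_k$, which is immediate from the smallness of the LOO. Everything else is routine norm bookkeeping.
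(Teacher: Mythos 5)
Your proof is correct and follows essentially the same route as the paper: expand the Cholesky residual using the factorization error $\DeltabDD_k$, isolate the term $\RRbar_k^T(I-\bQQbar_k^T\bQQbar_k)\RRbar_k$ bounded by the LOO hypothesis, and close the argument with $\norm{\RRbar_k}\leq\bigO{1}\norm{\bXX_k}$ (the paper obtains this last bound by multiplying the residual identity by $\bQQbar_k^T$ and rearranging rather than via $\bQQbar_k^\pinv$, but the two are equivalent here). Only note that your $\DeltaEE_k$ is defined with the opposite sign to the statement ($\RRbar_k^T\RRbar_k=\bXX_k^T\bXX_k+\DeltaEE_k$), which of course does not affect the norm bound.
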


\begin{proof}
    From \eqref{eq:cor:PIPI+:cholres:res} we can directly write
    \begin{equation*}
        \RRbar_k^T \bQQbar_k^T \bQQbar_k \RRbar_k = \bXX_k^T \bXX_k + \DeltaM_k, \quad \norm{\DeltaM_k} \leq \bigO{\eps} \norm{\bXX_k}^2,
    \end{equation*}
    where $\DeltaM_k = \bXX_k^T \DeltabDD_k + (\DeltabDD_k)^T\bXX_k + (\DeltabDD_k)^2$. Rearranging terms and applying the assumption \eqref{cor:PIPI+:cholres:LOO} yields 
    \begin{equation} \label{eq:cor:PIPI+:cholres:RRk^2}
        \begin{split}
            \RRbar_k^T \RRbar_k
            & = \bXX_k^T \bXX_k + \underbrace{\DeltaM_k + \RRbar_k^T(I - \bQQbar_k^T \bQQbar_k)\RRbar_k}_{=:\DeltaEE_k}, \\
            & \quad \norm{\DeltaEE_k} \leq \bigO{\eps} \left(\norm{\bXX_k}^2 + \norm{\RRbar_k}^2 \right).
        \end{split}
    \end{equation}
    
    Bounding $\norm{\RRbar_k}$ follows by multiplying \eqref{eq:cor:PIPI+:cholres:res} by $\bQQbar_k^T$ on the left and rearranging terms to arrive at
    \begin{equation*}
        \RRbar_k  = (I - \bQQbar_k^T \bQQbar_k)\RRbar_k + \bQQbar_k^T \bXX_k + \bQQbar_k^T \DeltabDD_k.
    \end{equation*}
    Under the assumptions of this lemma and by \eqref{eq:PIPI+:normQ}, we find
    \begin{equation*}
        \norm{\RRbar_k} \leq \frac{1 + \bigO{\eps}}{1 - \bigO{\eps}} \norm{\bXX_k} \leq \bigO{1} \norm{\bXX_k},
    \end{equation*}
    which, substituted back into \eqref{eq:cor:PIPI+:cholres:RRk^2}, completes the proof.
\end{proof}

\begin{remark}
    A version of Algorithm~\ref{alg:PIPI+} has been independently developed and its performance studied in \cite[Figure~4(b)]{YamHBetal24}; the authors there refer to it as \texttt{BCGS-PIP2}.  We became aware of \cite{YamHBetal24} as we were finishing this manuscript.  The authors provide a high-level discussion of the stability analysis of \texttt{BCGS-PIP2}, but they erroneously conflate \BCGSPIPIRO (Algorithm~\ref{alg:PIPI+}) with \BCGSPIPRO (Algorithm~\ref{alg:PIP+}); furthermore, it is unclear what \IOnoarg initializes their \texttt{BCGS-PIP2}.  Indeed, as our detailed analysis demonstrates, interchanging the for-loops has a nontrivial effect on attainable guarantees for LOO, as well as on conditions for the first \IOnoarg.  In particular, \BCGSPIPIRO requires a stronger \IOnoarg than \BCGSPIPRO to maintain LOO, because the first block vector is not reorthogonalized.  Furthermore, neither their analysis nor ours extends to the ``two-stage" algorithm they present, which is essentially a hybrid of \BCGSPIPRO and \BCGSPIPIRO and allows for a larger block size in the reorthogonalization step.  We leave the stability analysis of this hybrid algorithm to future work.
\end{remark}

\begin{remark}
    An immediate consequence of the analysis in this section is that the proven bounds hold trivially for block size $s=1$, and unfortunately the restriction $\bigO{\eps} \kappa^2(\bXX) \leq \frac{1}{2}$ cannot be alleviated; indeed, the size of $s$ only affects the (hidden) constants in $\bigO{\eps}$.  As long as $s \ll m$ (i.e., several orders of magnitude smaller than $m$), we do not expect it to affect the bounds.  Indeed, in Section~\ref{sec:experiments}, we look at examples with $s=2$ and $s=10$ and observe no dependence on block size.  In high-performance implementations, practical choices for $s$ depend on the application and hardware but typically remain small for a large number of unknowns.
\end{remark}

\begin{remark}
    Communication-avoiding Krylov subspace methods like $s$-step GMRES \cite{Hoe10} typically use a block Gram-Schmidt orthogonalization scheme and in each outer iteration generate a block vector of the form $\bmat{p_0(A)\vv & p_1(A)\vv & \cdots & p_{s-1}(A) \vv}$, where $A$ is a linear operator, $\vv$ is some starting vector, and $p_i$, $i \in \{0, \ldots, s-1\}$, are polynomials of degree $i$, respectively.  The methods considered in this manuscript can be used as block skeletons in $s$-step GMRES, but the analysis of its backward stability is more complicated than simply applying our results, due to the dual role that $s$ plays.  In the present manuscript, $s$ denotes a block partitioning of a \emph{fixed} matrix $\bXX$ and thus does not affect $\kappa(\bXX)$. Conversely, in $s$-step GMRES, $s$ determines not only the size of block vectors but also the conditioning of each block (and therefore the entire basis), as each is computed from powers of $A$.  Consequently the backward stability of $s$-step GMRES \emph{is} sensitive to the choice of $s$; for a complete analysis, see \cite{CarM24}, especially Figure~1 therein. In particular, note that employing \BCGSPIPIRO for orthogonalization in $s$-step GMRES may result in a limited backward error for certain examples, as demonstrated in Figures 8 and 9 of~\cite{CarM24}.
\end{remark}
\section{Mixed-precision variants} \label{sec:mp}
It is possible to use multiple precisions in the implementations of \BCGSPIPRO and \BCGSPIPIRO without affecting the validity of general results from Section~\ref{sec:pip_variants}.  We provide pseudocode for two-precision versions of each as Algorithms~\ref{alg:PIP+MP} and Algorithm~\ref{alg:PIPI+MP}, respectively.  In the pseudocode, $\epslo$ denotes computing and storing a quantity in the low precision with the associated unit roundoff, and $\epshi$ likewise for high precision.  In particular, $\epslo \geq \epshi$.

One motivation for using multiple precisions is to attempt to eliminate the restriction on $\kappa(\bXX)$ for the stability bounds and thereby extend stability guarantees for higher condition numbers; see related work in, e.g., \cite{Okt24, OktC23}.  In a uniform working precision, both \BCGSPIPRO and \BCGSPIPIRO require $\bigO{\eps} \kappa^2(\bXX) \leq 1$, which practically translates into $\kappa(\bXX) \leq \bigO{10^{4}}$ or $\kappa(\bXX) \leq \bigO{10^{8}}$, for single or double precisions, respectively.  It is natural to consider whether using double the working precision in some parts of the algorithm might alleviate this restriction.  At the same time, we do not want to increase communication cost by transmitting high-precision data.  Therefore Algorithms~\ref{alg:PIP+MP}-\ref{alg:PIPI+MP} are formulated so that the data and solutions $\bQQ$ and $\RR$ are stored in low precision, while high precision is used for the local (i.e., on-node) computation of operations like $\vV^T \vV$, Cholesky factorization, and inverting Cholesky factors, with the motivation being that the Pythagorean step, based on Cholesky factorization, is ultimately responsible for the condition number restriction.  Note that the inner products in line \ref{line:BCGSPIP:innerprod} of \BCGSPIP and lines \ref{line:PIPI+:vSk} and \ref{line:PIPI+:vTk} of \BCGSPIPIRO are now split across two steps to handle different precisions.  However, we still regard these a single sync point, as the synchronization itself just involves the movement of memory, which can of course be handled in multiple precisions.

At the same time, doubling the precision implies doubling the computational cost and (potentially) the amount of data moved. This overhead is highly dependent on the problem size and it may be negligible in particular cases, such as latency-bound regimes\footnote{That is, where \emph{latency}, or the time it takes for memory to travel from one node to another across a network or between levels of cache, dominates the runtime of an algorithm.} and when both precisions are implemented in hardware.  When high precision computations are performed locally, such as in line~\ref{line:PIPI+:Uk} in Algorithm~\ref{alg:PIPI+}, the extra overhead may very well be insignificant.

\begin{algorithm}[htbp!]
	\caption{$[\bQQ, \RR] = \BCGSPIPMP(\bXX, \IOnoarg)$ \label{alg:PIPMP}}
	\begin{algorithmic}[1]
		\State{$[\vQ_1, R_{11}] = \IO{\vX_1}$} \SComment{compute and return in $\epslo$}
		\For{$k = 2,\ldots,p$}
                \State{$\RR_{1:k-1,k} = \bQQ_{k-1}^T\vX_k$} \SComment{compute and return in $\epslo$}
                \State{$P_k = \vX_k^T\vX_k$} \SComment{compute and return in $\epshi$}
		    \State{$R_{kk} = \chol\bigl( P_k - \RR_{1:k-1,k}^T \RR_{1:k-1,k} \bigr)$} \SComment{compute and return in $\epshi$; cast to $\epslo$ after line~\ref{line:BCGSPIPMP:Q}}
		    \State{$\vV_k = \vX_k - \bQQ_{k-1} \RR_{1:k-1,k}$} \SComment{compute and return in $\epslo$}
		    \State{$\vQ_k = \vV_k R_{kk}^\inv$} \SComment{compute each $s \times s$ block locally in $\epshi$; return in $\epslo$}\label{line:BCGSPIPMP:Q}
		\EndFor
		\State \Return{$\bQQ = [\vQ_1, \ldots, \vQ_p]$, $\RR = (R_{ij})$}
	\end{algorithmic}
\end{algorithm}

\begin{algorithm}[htbp!]
    \caption{$[\bQQ, \RR] = \BCGSPIPROMP(\bXX, \IOnoarg)$ \label{alg:PIP+MP}}
    \begin{algorithmic}[1]
        \State{$[\bUU, \SS] = \BCGSPIPMP(\bXX, \IOnoarg)$} \SComment{computed in mixed; returned in $\epslo$}
        \State{$[\bQQ, \TT] = \BCGSPIPMP(\bUU, \IOnoarg)$} \SComment{computed in mixed; returned in $\epslo$}
        \State{$\RR = \TT \SS;$} \SComment{compute and return in $\epslo$}
        \State \Return{$\bQQ = [\vQ_1, \ldots, \vQ_p]$, $\RR = (R_{ij})$}
    \end{algorithmic}
\end{algorithm}

\begin{algorithm}[htbp!]
    \caption{$[\bQQ, \RR] = \BCGSPIPIROMP(\bXX, \IOnoarg)$ \label{alg:PIPI+MP}}
    \begin{algorithmic}[1]
        \State{$[\vQ_1, R_{11}] = \IO{\vX_1}$} \SComment{compute and return in $\epslo$}
        \For{$k = 2, \ldots, p$}
            \State{$\SS_{1:k-1,k} = \bQQ_{k-1}^T \vX_k$} \SComment{compute and return in $\epslo$}\label{line:PIPI+MP:vSk}
            \State{$\Omega_k = \vX_k^T \vX_k$} \SComment{compute and return in $\epshi$}\label{line:PIPI+MP:Omveck}
            \State{$S_{kk} = \chol\bigl( \Omega_k - \SS_{1:k-1,k}^T \SS_{1:k-1,k} \bigr)$} \SComment{compute and return in $\epshi$}
            \State{$\vV_k = \vX_k - \bQQ_{k-1} \SS_{1:k-1,k}$} \SComment{compute and return in $\epslo$}\label{line:PIPI+MP:Vk}
            \State{$\vU_k = \vV_k S_{kk}^\inv$} \SComment{compute each $s \times s$ block locally in $\epshi$; return in $\epslo$}
            \State{$\TT_{1:k-1,k} = \bQQ_{k-1}^T \vU_k$} \SComment{compute and return in $\epslo$}
            \State{$P_k = \vU_k^T \vU_k$} \SComment{compute and return in $\epshi$}
            \State{$T_{kk} = \chol\bigl( P_k - \TT_{1:k-1,k}^T \TT_{1:k-1,k} \bigr)$} \SComment{compute and return in $\epshi$}
            \State{$\vW_k = \vU_k - \bQQ_{k-1} \TT_{1:k-1,k}$} \SComment{compute and return in $\epslo$}
            \State{$\vQ_k = \vW_k T_{kk}^\inv$} \SComment{compute each $s \times s$ block locally in $\epshi$; return in $\epslo$}
            \State{$\RR_{1:k-1,k} = \SS_{1:k-1,k} + \TT_{1:k-1,k} S_{kk}$} \SComment{compute and return in $\epslo$}
            \State{$R_{kk} = T_{kk} S_{kk}$} \SComment{compute in $\epshi$; return in $\epslo$}
        \EndFor
        \State \Return{$\bQQ = [\vQ_1, \ldots, \vQ_p]$, $\RR = (R_{ij})$}
    \end{algorithmic}
\end{algorithm}

Unfortunately, our intuitive proposals for mixed-precision variants do not achieve the desired stability for either \BCGSPIPROMP or \BCGSPIPIROMP, which we demonstrate in the following sections.

\subsection{Two-precision \texttt{BCGS-PIP} and \texttt{BCGS-PIP+}} \label{sec:BCGSPIPROMP}

With Theorem~\ref{thm:PIP+}, we already have generalized bounds on the LOO that will also hold for \BCGSPIPROMP.  To determine what the constants $\deltaUS, \omegaU, \deltaQT, \omegaQ$ and $\deltaTS$ look like in the two-precision case, we need to obtain generalized bounds for \BCGSPIP that do not explicitly rely on a uniform precision but rather express bounds in terms of different constants with subscripts corresponding to the source of error.  The following two lemmas take care of this; they generalize \cite[Theorems~3.1 and 3.2]{CarLR21}, respectively. Proofs of Lemma \ref{lem:PIP:3.1} and Lemma \ref{lem:PIP:3.2} are available in Appendix \ref{app:lem:PIP:3.1} and \ref{app:lem:PIP:3.2}, respectively.

\begin{lemma} \label{lem:PIP:3.1}
    Let $\bXX \in \spR^{m \times ps}$ and $[\bQQbar, \RRbar] = \BCGSPIP(\bXX, \IOnoarg)$, for some $\IOnoarg$. Suppose $\xi \kappa^2(\bXX) \leq \frac{1}{2}$ and $\rho \kappa^2(\bXX) \leq \frac{1}{4}$, for constants $\xi, \rho \in (0,1)$.  Furthermore, assume that
    \begin{align}
        \RRbar^T \RRbar &= \bXX^T \bXX + \DeltaEE, \quad \norm{\DeltaEE} \leq \xi \norm{\bXX}^2, \mbox{ and} \label{eq:lem:PIP:3.1:cholres} \\
        \bQQbar \RRbar &= \bXX + \DeltabDD, \quad \norm{\DeltabDD} \leq \rho (\norm{\bXX} + \norm{\bQQbar} \norm{\RRbar} ). \label{eq:lem:PIP:3.1:res}
    \end{align}
    Then
    \begin{align}
        \norm{I - \bQQbar^T \bQQbar}
        & \leq \frac{\bigl(\xi + 11 \cdot \rho \bigr) \kappa^2(\bXX)}{1 - \xi \kappa^2(\bXX)}; \label{eq:lem:PIP:3.1:LOO} \\
        \norm{\bQQbar}
        & \leq 3; \mbox{ and} \label{eq:lem:PIP:3.1:normQ} \\
        \norm{\DeltabDD}
        & \leq 6 \cdot \rho \norm{\bXX}. \label{eq:lem:PIP:3.1:D}
    \end{align}
\end{lemma}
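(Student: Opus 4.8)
The plan is to treat Lemma~\ref{lem:PIP:3.1} as a purely algebraic consequence of the two hypotheses \eqref{eq:lem:PIP:3.1:cholres}--\eqref{eq:lem:PIP:3.1:res}, following the skeleton of \cite[Theorem~3.1]{CarLR21} but keeping $\xi$ and $\rho$ separated and dropping $O(\rho^2)$ remainders as per the paper's stated convention. \textbf{Step 1 (control of $\RRbar$).} From \eqref{eq:lem:PIP:3.1:cholres} and the singular-value perturbation bound \cite[Corollary~8.6.2]{GolV13}, $\sigmin^2(\RRbar) \geq \sigmin^2(\bXX) - \norm{\DeltaEE} \geq \sigmin^2(\bXX)\bigl(1 - \xi\kappa^2(\bXX)\bigr) \geq \tfrac12\sigmin^2(\bXX) > 0$, so $\RRbar$ is invertible with $\norm{\RRbar^\inv}^2 \leq \kappa^2(\bXX)\big/\bigl(\norm{\bXX}^2(1-\xi\kappa^2(\bXX))\bigr)$, and similarly $\norm{\RRbar}^2 \leq (1+\xi)\norm{\bXX}^2$. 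The crucial observation is that, using $\bXX^T\bXX = \RRbar^T\RRbar - \DeltaEE$,
\[
    (\bXX\RRbar^\inv)^T(\bXX\RRbar^\inv) = I - \RRbar^\tinv\DeltaEE\RRbar^\inv,
    \quad\text{hence}\quad
    \norm{\bXX\RRbar^\inv}^2 \leq \frac{1}{1-\xi\kappa^2(\bXX)} \leq 2,
\]
which keeps $\bXX\RRbar^\inv$ of norm $O(1)$ even though the product $\norm{\bXX}\,\norm{\RRbar^\inv}$ alone is only $O(\kappa(\bXX))$.

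\textbf{Step 2 ($\norm{\bQQbar}\leq 3$).} From \eqref{eq:lem:PIP:3.1:res}, $\bQQbar = (\bXX+\DeltabDD)\RRbar^\inv$, which combined with $\bXX^T\bXX = \RRbar^T\RRbar - \DeltaEE$ gives the identity
\[
    \bQQbar^T\bQQbar = I + \RRbar^\tinv\bigl(\bXX^T\DeltabDD + \DeltabDD^T\bXX + \DeltabDD^T\DeltabDD - \DeltaEE\bigr)\RRbar^\inv.
\]
Taking norms, dropping the $O(\rho^2)$ term $\norm{\DeltabDD}^2$, and inserting $\norm{\DeltaEE}\leq\xi\norm{\bXX}^2$, $\norm{\DeltabDD} \leq \rho\norm{\bXX}\bigl(1 + \sqrt{1+\xi}\,\norm{\bQQbar}\bigr)$, the Step~1 bounds, and $\xi\kappa^2(\bXX)\leq\tfrac12$, $\rho\kappa^2(\bXX)\leq\tfrac14$, yields $\norm{\bQQbar}^2 \leq 3 + \sqrt{1+\xi}\,\norm{\bQQbar}$. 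Since $\norm{\bQQbar}$ enters only linearly, no bootstrap is needed: the positive root of $q^2 - \sqrt{1+\xi}\,q - 3$ is below $3$ for $\xi\le\tfrac12$, which gives \eqref{eq:lem:PIP:3.1:normQ}.

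\textbf{Steps 3--4 (refined residual and LOO).} Substituting $\norm{\bQQbar}\leq 3$ and $\norm{\RRbar}\leq\sqrt{1+\xi}\,\norm{\bXX}$ (with $\xi\le\xi\kappa^2(\bXX)\le\tfrac12$) into \eqref{eq:lem:PIP:3.1:res} gives $\norm{\DeltabDD}\leq\rho\bigl(1+3\sqrt{3/2}\bigr)\norm{\bXX}\leq 6\rho\norm{\bXX}$, i.e.\ \eqref{eq:lem:PIP:3.1:D}. For the LOO, rewrite the identity above as $I - \bQQbar^T\bQQbar = \RRbar^\tinv\bigl(\DeltaEE - \bXX^T\DeltabDD - \DeltabDD^T\bXX - \DeltabDD^T\DeltabDD\bigr)\RRbar^\inv$; taking norms, discarding the $O(\rho^2)$ term, and using $\norm{\DeltaEE}\le\xi\norm{\bXX}^2$, $\norm{\DeltabDD}\le(1+3\sqrt{3/2})\rho\norm{\bXX}$, and $\norm{\RRbar^\inv}^2 \leq \kappa^2(\bXX)\big/\bigl(\norm{\bXX}^2(1-\xi\kappa^2(\bXX))\bigr)$ gives
\[
    \norm{I - \bQQbar^T\bQQbar} \leq \frac{\bigl(\xi + 2(1+3\sqrt{3/2})\rho\bigr)\kappa^2(\bXX)}{1-\xi\kappa^2(\bXX)} \leq \frac{(\xi + 11\rho)\kappa^2(\bXX)}{1-\xi\kappa^2(\bXX)},
\]
since $2(1+3\sqrt{3/2}) < 11$, establishing \eqref{eq:lem:PIP:3.1:LOO}.

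\textbf{Main obstacle.} The conceptual content is light; the real difficulty is bookkeeping in Step~2. The naive estimate $\norm{\bQQbar}\leq(\norm{\bXX}+\norm{\DeltabDD})\norm{\RRbar^\inv}$ is only $O(\kappa(\bXX))$ and therefore useless, so one must route everything through the $O(1)$ bound on $\norm{\bXX\RRbar^\inv}$ from Step~1 and then check that the self-referential inequality for $\norm{\bQQbar}$ genuinely closes at the constant $3$ under the worst cases $\xi\kappa^2(\bXX)=\tfrac12$ and $\rho\kappa^2(\bXX)=\tfrac14$ — which it does only after the quadratic-in-$\rho$ remainder is discarded, consistent with the convention used throughout the paper.
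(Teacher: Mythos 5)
Your proposal is correct and takes essentially the same route as the paper's own proof (Appendix~\ref{app:lem:PIP:3.1}): the identical Gram-matrix identity for $\bQQbar^T\bQQbar$, the same quadratic inequality $\norm{\bQQbar}^2 \leq 3 + \sqrt{1+\xi}\,\norm{\bQQbar}$ giving $\norm{\bQQbar}\leq 3$, and the same subsequent substitutions for the residual and LOO bounds. The only differences are cosmetic: you make the $\bigO{1}$ bound on $\norm{\bXX\RRbar^\inv}$ explicit (the paper achieves the same effect implicitly via $\norm{\RRbar^\inv}^2\norm{\bXX}^2 \leq \kappa^2(\bXX)/(1-\xi\kappa^2(\bXX))$) and you use $\xi \leq \tfrac12$ rather than $\xi < 1$, yielding slightly sharper intermediate constants that still land within the stated $6\rho$ and $\xi + 11\rho$.
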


For the following, we assume that for each $k \in \{2, \ldots, p\}$, $\RRbar_{k-1}$ and $\bQQbar_{k-1}$ are computed by \BCGSPIP and that there exist $\xi_{k-1}, \rho_{k-1} \in (0,1)$ such that
\begin{equation} \label{eq:PIP:cholres}
    \begin{split}
        \RRbar_{k-1}^T \RRbar_{k-1} & = \bXX_{k-1}^T \bXX_{k-1} + \DeltaEE_{k-1}, \\
        \norm{\DeltaEE_{k-1}} & \leq \xi_{k-1} \norm{\bXX_{k-1}}^2;
    \end{split}
\end{equation}
and
\begin{equation} \label{eq:PIP:res}
    \begin{split}
        \bQQbar_{k-1} \RRbar_{k-1} & = \bXX_{k-1} + \DeltabDD_{k-1}, \\
        \norm{\DeltabDD_{k-1}} & \leq \rho_{k-1} (\norm{\bXX_{k-1}} + \norm{\bQQbar_{k-1}} \norm{\RRbar_{k-1}} ).
    \end{split}
\end{equation}
We can furthermore write the following for intermediate quantities computed by \BCGSPIP, where we omit the explicit dependence on $k$ for readability and each $\delta_{*} \in (0,1)$:
\begin{align}
    \RRbar_{1:k-1,k} & = \bQQbar^T_{k-1} \vX_k + \DeltavR_k,
    \quad \norm{\DeltavR_k} \leq \deltaQX \norm{\vX_k}; \label{eq:PIP:vRk} \\
    \Pbar_k & = \vX_k^T \vX_k + \DeltaP_k, 
    \quad \norm{\DeltaP_k} \leq \deltaXX \norm{\vX_k}^2; \label{eq:PIP:Pk} \\
    \Rbar_{kk}^T \Rbar_{kk} & = \Pbar_k - \RRbar^T_{1:k-1,k} \RRbar_{1:k-1,k} + \DeltaF_k + \DeltaC_k, \label{eq:PIP:Rkk} \\
    \norm{\DeltaF_k} & \leq \deltaRR \norm{\vX_k}^2, \quad \norm{\DeltaC_k} \leq \deltachol \norm{\vX_k}^2; \label{eq:PIP:chol} \\
    \vVbar_k & = \vX_k - \bQQbar_{k-1} \RRbar_{1:k-1,k} + \DeltavV_k,
    \quad \norm{\DeltavV_k} \leq \deltaQR \norm{\vX_k}; \mbox{ and} \label{eq:PIP:Vk} \\
    \vQbar_k \Rbar_k &= \vVbar_k + \DeltavG_k,
    \quad \norm{\DeltavG_k} \leq \deltaQ \norm{\vQbar_k} \norm{\Rbar_k}. \label{eq:PIP:Qk}
\end{align}
Throughout \eqref{eq:PIP:vRk}--\eqref{eq:PIP:Qk}, we have dropped quadratic error terms and applied\\Lemma~\ref{lem:PIP:3.1} to simplify constants (similarly to what we have done in \eqref{eq:PIPI+:vSk}--\eqref{eq:PIPI+:Tkk}; the contribution of $\norm{\bQQbar_{k-1}}$ is absorbed into the constant).  Furthermore, $\DeltaF_k$ denotes the floating-point error from the subtraction of the product $\RRbar^T_{1:k-1,k} \RRbar_{1:k-1,k}$ from $\Pbar_k$, and $\DeltaC_k$ denotes the error from the Cholesky factorization of that result; note that, similar to the argument in the proof of \cite[Theorem~3.2]{CarLR21}, $\Pbar_k - \RRbar^T_{1:k-1,k} \RRbar_{1:k-1,k}$ should be symmetric positive definite.

\begin{lemma} \label{lem:PIP:3.2}
    Let $\bXX \in \spR^{m \times ps}$ and fix $k \in \{2, \ldots, p\}$. Assume that \eqref{eq:PIP:cholres}--\eqref{eq:PIP:Qk} are satisfied.  Then the following hold:
    \begin{equation} \label{eq:lem:PIP:3.2:cholres}
        \RRbar_k^T \RRbar_k = \bXX_k^T \bXX_k + \DeltaEE_k,
        \quad \norm{\DeltaEE_k} \leq \xi_k \norm{\bXX_k}^2,
    \end{equation}
    and
    \begin{equation} \label{eq:lem:PIP:3.2:res}
        \bQQbar_k \RRbar_k = \bXX_k + \DeltabDD_k,
        \quad \norm{\DeltabDD_k} \leq \rho_k (\norm{\bXX_k} + \norm{\bQQbar_k} \norm{\RRbar_k}),
    \end{equation}
    where
    \[
    \xi_k = \xi_{k-1} + 12 \cdot \rho_{k-1} + 2 \sqrt{2} \cdot \deltaQX + \deltaXX+ \deltachol + \deltaRR
    \]
    and $\rho_k = 6 \cdot \rho_{k-1} + \deltaQR + \deltaQ$.
\end{lemma}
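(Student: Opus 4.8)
The plan is to mirror the inductive argument of \cite[Theorem~3.2]{CarLR21}, but carrying every rounding-error constant symbolically. Partition $\RRbar_k = \bmat{\RRbar_{k-1} & \RRbar_{1:k-1,k} \\ 0 & \Rbar_{kk}}$ and $\bQQbar_k = \bmat{\bQQbar_{k-1} & \vQbar_k}$, so that both claimed identities become block computations involving only the quantities produced at the $k$th step of Algorithm~\ref{alg:PIP}.

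\emph{Cholesky residual.} Expanding $\RRbar_k^T\RRbar_k - \bXX_k^T\bXX_k$ block-by-block produces three pieces. The leading $(k-1)s\times(k-1)s$ block is exactly $\DeltaEE_{k-1}$ by~\eqref{eq:PIP:cholres}. The off-diagonal block is $\RRbar_{k-1}^T\RRbar_{1:k-1,k} - \bXX_{k-1}^T\vX_k$; substituting~\eqref{eq:PIP:vRk} and then $\RRbar_{k-1}^T\bQQbar_{k-1}^T = (\bQQbar_{k-1}\RRbar_{k-1})^T = (\bXX_{k-1} + \DeltabDD_{k-1})^T$ from~\eqref{eq:PIP:res}, it collapses to $\DeltabDD_{k-1}^T\vX_k + \RRbar_{k-1}^T\DeltavR_k$. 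The trailing $s\times s$ block is $\RRbar_{1:k-1,k}^T\RRbar_{1:k-1,k} + \Rbar_{kk}^T\Rbar_{kk} - \vX_k^T\vX_k$, which by~\eqref{eq:PIP:Rkk} equals $\Pbar_k + \DeltaF_k + \DeltaC_k - \vX_k^T\vX_k = \DeltaP_k + \DeltaF_k + \DeltaC_k$ after invoking~\eqref{eq:PIP:Pk} (recall from the discussion above, as in \cite[Theorem~3.2]{CarLR21}, that $\Pbar_k - \RRbar_{1:k-1,k}^T\RRbar_{1:k-1,k}$ is symmetric positive definite, so $\Rbar_{kk}$ is well defined). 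To bound $\norm{\DeltaEE_k}$ I would use the norm inequality for a symmetric $2\times2$ block matrix in terms of its blocks (as invoked elsewhere via \cite[P.15.50]{GarH17}), which weights the off-diagonal block twice, together with $\norm{\RRbar_{k-1}} \le \sqrt{2}\,\norm{\bXX_{k-1}}$ (immediate from~\eqref{eq:PIP:cholres} and $\xi_{k-1}<1$), the bounds $\norm{\bQQbar_{k-1}}\le 3$ and $\norm{\DeltabDD_{k-1}}\le 6\rho_{k-1}\norm{\bXX_{k-1}}$ from Lemma~\ref{lem:PIP:3.1} applied to $\bXX_{k-1}$, and the submatrix monotonicity $\norm{\bXX_{k-1}},\norm{\vX_k}\le\norm{\bXX_k}$. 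Dropping quadratic cross-terms, the off-diagonal block is bounded by $(6\rho_{k-1} + \sqrt{2}\,\deltaQX)\norm{\bXX_k}^2$ via~\eqref{eq:PIP:vRk}, and the trailing block by $(\deltaXX + \deltachol + \deltaRR)\norm{\bXX_k}^2$ via~\eqref{eq:PIP:Pk}--\eqref{eq:PIP:chol}; collecting (and doubling the off-diagonal contribution) yields exactly $\norm{\DeltaEE_k}\le \xi_k\norm{\bXX_k}^2$ with $\xi_k = \xi_{k-1} + 12\rho_{k-1} + 2\sqrt{2}\,\deltaQX + \deltaXX + \deltachol + \deltaRR$, which is~\eqref{eq:lem:PIP:3.2:cholres}.

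\emph{Standard residual.} The first block column of $\bQQbar_k\RRbar_k - \bXX_k$ is $\DeltabDD_{k-1}$ by~\eqref{eq:PIP:res}, and the last block column is $\bQQbar_{k-1}\RRbar_{1:k-1,k} + \vQbar_k\Rbar_{kk} - \vX_k$; substituting~\eqref{eq:PIP:Qk} and then~\eqref{eq:PIP:Vk} makes the $\bQQbar_{k-1}\RRbar_{1:k-1,k}$ terms cancel, leaving $\DeltavV_k + \DeltavG_k$. Hence $\DeltabDD_k = \bmat{\DeltabDD_{k-1} & \DeltavV_k + \DeltavG_k}$, so $\norm{\DeltabDD_k}\le \norm{\DeltabDD_{k-1}} + \norm{\DeltavV_k} + \norm{\DeltavG_k}$. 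Using $\norm{\DeltabDD_{k-1}}\le 6\rho_{k-1}\norm{\bXX_{k-1}}\le 6\rho_{k-1}\norm{\bXX_k}$ (Lemma~\ref{lem:PIP:3.1}), $\norm{\DeltavV_k}\le\deltaQR\norm{\vX_k}\le\deltaQR\norm{\bXX_k}$ from~\eqref{eq:PIP:Vk}, and $\norm{\DeltavG_k}\le\deltaQ\norm{\vQbar_k}\norm{\Rbar_{kk}}\le\deltaQ\norm{\bQQbar_k}\norm{\RRbar_k}$ from~\eqref{eq:PIP:Qk} (with $\vQbar_k$ and $\Rbar_{kk}$ submatrices of $\bQQbar_k$ and $\RRbar_k$), everything fits inside $\rho_k(\norm{\bXX_k} + \norm{\bQQbar_k}\norm{\RRbar_k})$ with $\rho_k = 6\rho_{k-1} + \deltaQR + \deltaQ$, which is~\eqref{eq:lem:PIP:3.2:res}.

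\emph{Main obstacle.} There is no conceptual hurdle; the work is constant bookkeeping. The delicate part is getting the off-diagonal Cholesky-residual contribution down to $2\sqrt2\,\deltaQX$ (rather than a cruder multiple) by pairing the tight bound $\norm{\RRbar_{k-1}}\le\sqrt2\,\norm{\bXX_{k-1}}$ with the $2\times2$ block-matrix norm inequality, and checking that invoking Lemma~\ref{lem:PIP:3.1} at the $(k-1)$st step is legitimate --- i.e.\ that its conditioning hypotheses $\xi_{k-1}\kappa^2(\bXX_{k-1})\le\frac12$ and $\rho_{k-1}\kappa^2(\bXX_{k-1})\le\frac14$ hold --- which is ensured in the uniform-precision applications since Lemma~\ref{lem:PIP:3.2} is used in a recursion that keeps $\xi_{k-1},\rho_{k-1}$ at $\bigO{\eps}$ relative to $\kappa^{-2}(\bXX)$.
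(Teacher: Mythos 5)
Your proposal is correct and follows essentially the same route as the paper's proof: the same block partitioning of $\RRbar_k^T\RRbar_k$ and $\bQQbar_k\RRbar_k$, the same substitution of \eqref{eq:PIP:vRk} and \eqref{eq:PIP:res} to reduce the off-diagonal block to $\DeltabDD_{k-1}^T\vX_k + \RRbar_{k-1}^T\DeltavR_k$, the same use of Lemma~\ref{lem:PIP:3.1} at step $k-1$ for $\norm{\DeltabDD_{k-1}}\le 6\rho_{k-1}\norm{\bXX_{k-1}}$ and $\norm{\RRbar_{k-1}}\le\sqrt{2}\norm{\bXX_{k-1}}$, and the same block-norm inequality from \cite[P.15.50]{GarH17} giving the factor $2$ on the off-diagonal term, yielding identical $\xi_k$ and $\rho_k$. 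Your closing remark about needing the conditioning hypotheses of Lemma~\ref{lem:PIP:3.1} at step $k-1$ matches how the paper handles it (implicitly in the lemma, explicitly when it is invoked recursively in Theorem~\ref{thm:PIP-MP}).
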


On their own, Lemmas~\ref{lem:PIP:3.1} and \ref{lem:PIP:3.2} do not complete the analysis; they describe the relationship between bounds from one iteration to the next without specifying the precision.  Set
\begin{equation} \label{eq:PIP:rho_xi}
    \rhomax := \max_{k \in \{1, \ldots, p-1\}} \rho_k \quad\mbox{and}\quad \ximax := \max_{k \in \{1, \ldots, p-1\}} \xi_k. 
\end{equation}
In the next theorem, we combine these lemmas to obtain bounds on \BCGSPIPMP. A proof of the theorem is available in Appendix \ref{app:thm:PIP-MP}.

\begin{theorem} \label{thm:PIP-MP} 
    Let $\bXX \in \spR^{m \times ps}$ such that
    \begin{equation} \label{eq:thm:PIP-MP:assump}
        (\rhomax + \rho_p) \kappa^2(\bXX) \leq \frac{1}{4} \quad\mbox{and}\quad (\ximax + \xi_p) \kappa^2(\bXX) \leq \frac{1}{2},
    \end{equation}
    for $\rho_p$ and $\xi_p$ defined as in Lemma~\ref{lem:PIP:3.2}.  Suppose $[\bQQbar, \RRbar] = \BCGSPIPMP(\bXX, \IOnoarg)$, where for all $\vX \in \spR^{m \times s}$ with $\kappa(\vX) \leq \kappa(\bXX)$, $[\vQbar, \Rbar] = \IO{\vX}$ satisfy
    \begin{align}
        \Rbar^T \Rbar & = \vX^T \vX + \DeltaE,
        \quad \norm{\DeltaE} \leq \bigO{\epslo} \norm{\vX}^2 \mbox{ and} \label{eq:thm:PIP-MP:IO:cholres} \\ 
        \vQbar \Rbar & = \vX + \DeltavD, \quad \norm{\DeltavD} \leq \bigO{\epslo} \norm{\vX}. \label{eq:thm:PIP-MP:IO:res}
    \end{align}
    Then for all $k \in \{1, \ldots, p\}$,
    \begin{align}
        \RRbar_k^T \RRbar_k
        & = \bXX_k^T \bXX_k + \DeltaEE_k,
        \quad \norm{\DeltaEE_k} \leq \bigO{\epslo} \norm{\bXX_k}^2; \label{eq:thm:PIP-MP:cholres} \\
        \bQQbar_k \RRbar_k 
        & = \bXX_k + \bDD_k, \quad \norm{\bDD_k} \leq \bigO{\epslo} \norm{\bXX_k}; \mbox{ and} \label{eq:thm:PIP-MP:res} \\
        \norm{I - \bQQbar_k^T \bQQbar_k}
        & \leq \bigO{\epslo} \kappa^2(\bXX_k). \label{eq:thm:PIP-MP:LOO}
    \end{align}
\end{theorem}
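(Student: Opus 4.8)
The plan is to establish the three estimates jointly by induction on $k \in \{1, \ldots, p\}$, using Lemma~\ref{lem:PIP:3.2} to propagate the Cholesky- and standard-residual bounds from step $k-1$ to step $k$, and Lemma~\ref{lem:PIP:3.1} at each step to convert those into the LOO bound (and the tightened residual bound). The only new ingredient relative to the uniform-precision analysis is the precision accounting in Algorithm~\ref{alg:PIPMP}: I claim every local rounding constant appearing in \eqref{eq:PIP:vRk}--\eqref{eq:PIP:Qk} is $\bigO{\epslo}$. The constants $\deltaQX$ (the block inner product $\bQQbar_{k-1}^T \vX_k$), $\deltaQR$ (the update $\vX_k - \bQQbar_{k-1}\RRbar_{1:k-1,k}$), and $\deltaQ$ (the solve $\vV_k R_{kk}^\inv$, whose result is cast to $\epslo$) are $\bigO{\epslo}$ because the data and outputs involved are stored in low precision; the constants $\deltaXX$, $\deltaRR$, $\deltachol$ come from the high-precision local operations $\vX_k^T\vX_k$, the subtraction $\Pbar_k - \RRbar_{1:k-1,k}^T\RRbar_{1:k-1,k}$, and the Cholesky factorization, hence are $\bigO{\epshi} \leq \bigO{\epslo}$. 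Thus the recurrences of Lemma~\ref{lem:PIP:3.2} specialize to $\xi_k = \xi_{k-1} + 12 \cdot \rho_{k-1} + \bigO{\epslo}$ and $\rho_k = 6 \cdot \rho_{k-1} + \bigO{\epslo}$.

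For the base case $k=1$ we have $\bQQbar_1 = \vQbar_1$ and $\RRbar_1 = \Rbar_{11}$ with $[\vQbar_1, \Rbar_{11}] = \IO{\vX_1}$: assumption~\eqref{eq:thm:PIP-MP:IO:cholres} is exactly \eqref{eq:PIP:cholres} at index $1$ with $\xi_1 = \bigO{\epslo}$, and \eqref{eq:thm:PIP-MP:IO:res} gives \eqref{eq:PIP:res} with $\rho_1 = \bigO{\epslo}$ (using $\norm{\bXX_1} = \norm{\vX_1}$ and discarding the nonnegative extra term). For the inductive step I assume \eqref{eq:PIP:cholres}--\eqref{eq:PIP:res} hold at $k-1$; combined with the mixed-precision rounding bounds \eqref{eq:PIP:vRk}--\eqref{eq:PIP:Qk} discussed above, the hypotheses of Lemma~\ref{lem:PIP:3.2} are in force, which delivers \eqref{eq:lem:PIP:3.2:cholres}--\eqref{eq:lem:PIP:3.2:res} at step $k$ together with the two recurrences. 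Unrolling them from the $\bigO{\epslo}$ base gives $\rho_k = \bigO{\epslo}$ and then $\xi_k = \bigO{\epslo}$ for every $k$, where the geometric growth of the multipliers is absorbed into the hidden polynomial factor of $\bigO{\cdot}$, as elsewhere in the paper; in particular $\rhomax$, $\ximax$, $\rho_p$, and $\xi_p$ are all $\bigO{\epslo}$, so the standing hypothesis~\eqref{eq:thm:PIP-MP:assump} is compatible with $\epslo \kappa^2(\bXX)$ being small.

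The three conclusions now follow. Equation~\eqref{eq:thm:PIP-MP:cholres} is exactly \eqref{eq:lem:PIP:3.2:cholres} with $\xi_k = \bigO{\epslo}$. For the other two, I feed \eqref{eq:lem:PIP:3.2:cholres}--\eqref{eq:lem:PIP:3.2:res} (which are precisely the hypotheses of Lemma~\ref{lem:PIP:3.1}) into Lemma~\ref{lem:PIP:3.1} applied at step $k$ with $\bXX$ replaced by $\bXX_k$ (legitimate since the method processes block columns left to right, so $\bQQbar_k,\RRbar_k$ are the outputs for $\bXX_k$); its conditions $\xi_k \kappa^2(\bXX_k) \leq \frac{1}{2}$ and $\rho_k \kappa^2(\bXX_k) \leq \frac{1}{4}$ follow from~\eqref{eq:thm:PIP-MP:assump} and $\kappa(\bXX_k) \leq \kappa(\bXX)$ (\cite[Corollary~8.6.3]{GolV13}). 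Then \eqref{eq:lem:PIP:3.1:D} yields $\norm{\DeltabDD_k} \leq 6 \cdot \rho_k \norm{\bXX_k} = \bigO{\epslo}\norm{\bXX_k}$, which is~\eqref{eq:thm:PIP-MP:res}, and \eqref{eq:lem:PIP:3.1:LOO} yields $\norm{I - \bQQbar_k^T \bQQbar_k} \leq \frac{(\xi_k + 11 \cdot \rho_k)\kappa^2(\bXX_k)}{1 - \xi_k \kappa^2(\bXX_k)} \leq \bigO{\epslo}\kappa^2(\bXX_k)$, which is~\eqref{eq:thm:PIP-MP:LOO}.

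The calculation is otherwise routine; the substantive content — and the reason this theorem is effectively a \emph{negative} result — is the precision accounting of the first paragraph. Storing $\bQQbar$, $\RRbar$, and the block inner products in $\epslo$ reinjects $\bigO{\epslo}$ error at precisely the stages that govern the $\kappa(\bXX)$ restriction, so none of the $\epshi$ accuracy in the Cholesky factorization or the solves $R_{kk}^\inv$ survives into the final bounds. The one place demanding a little care is confirming that forming $R_{kk}$ and $R_{kk}^\inv$ in high precision but casting $\vQ_k$ back to $\epslo$ leaves $\deltaQ$ (and hence the whole recurrence) at $\bigO{\epslo}$ rather than anything smaller — the cast dominates — so that the mixed-precision recurrences genuinely collapse to the uniform-$\epslo$ ones.
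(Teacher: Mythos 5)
Your proposal is correct and follows essentially the same route as the paper's proof: establish the base case from the \IOnoarg assumptions, classify the rounding constants in \eqref{eq:PIP:vRk}--\eqref{eq:PIP:Qk} by precision, propagate \eqref{eq:PIP:cholres}--\eqref{eq:PIP:res} via Lemma~\ref{lem:PIP:3.2}, and invoke Lemma~\ref{lem:PIP:3.1} at each step by induction. The only (immaterial) difference is bookkeeping: you attribute the cast of $\vQ_k$ to $\epslo$ and hence take $\deltaQ = \bigO{\epslo}$, whereas the paper lists $\deltaQ = \bigO{\epshi}$; since both are bounded by $\bigO{\epslo}$, the conclusions are unaffected, and your explicit acknowledgment that the growth of the recurrence constants is absorbed into $\bigO{\cdot}$ matches the paper's implicit treatment.
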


Applying Theorem~\ref{thm:PIP-MP} twice to the dual-precision \BCGSPIPROMP shows that
\begin{equation} \label{eq:PIP+MP:constants}
    \deltaUS, \omegaU, \deltaQT, \omegaQ, \deltaTS = \bigO{\epslo},
\end{equation}
where the constants stem from \eqref{eq:PIP+:LOO:U}--\eqref{eq:PIP+:R=TS}.  The following corollary is a direct consequence of \eqref{eq:PIP+MP:constants} and Theorem~\ref{thm:PIP+} and demonstrates that \BCGSPIPROMP is dominated by $\bigO{\epslo}$ error.

\begin{corollary} \label{cor:PIP+MP}
    Let $\bXX \in \spR^{m \times ps}$ such that $\delta_{\max} \kappa^2(\bXX) \leq \frac{1}{4}$,
    where $\delta_{\max} := \max\{ \deltaUS, \omegaU, \deltaQT, \omegaQ \}$.  Suppose $[\bQQbar, \RRbar] = \BCGSPIPMP(\bXX, \IOnoarg)$, where for all $\vX \in \spR^{m \times s}$ with $\kappa(\vX) \leq \kappa(\bXX)$, $[\vQbar, \Rbar] = \IO{\vX}$ satisfy
    \begin{align*}
        \Rbar^T \Rbar & = \vX^T \vX + \DeltaE,
        \quad \norm{\DeltaE} \leq \bigO{\epslo} \norm{\vX}^2 \mbox{ and} \\ 
        \vQbar \Rbar & = \vX + \DeltavD, \quad \norm{\DeltavD} \leq \bigO{\epslo} \norm{\vX}.
    \end{align*}
    Then for all $k \in \{1, \ldots, p\}$,
    \begin{align*}
        \RRbar_k^T \RRbar_k
        & = \bXX_k^T \bXX_k + \DeltaEE_k,
        \quad \norm{\DeltaEE_k} \leq \bigO{\epslo} \norm{\bXX_k}^2; \\
        \bQQbar_k \RRbar_k 
        & = \bXX_k + \bDD_k, \quad \norm{\bDD_k} \leq \bigO{\epslo} \norm{\bXX_k}; \mbox{ and} \\
        \norm{I - \bQQbar_k^T \bQQbar_k}
        & \leq \bigO{\epslo}.
    \end{align*}
\end{corollary}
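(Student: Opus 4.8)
The corollary is the two-precision counterpart of Corollaries~\ref{cor:PIP+:LOO} and \ref{cor:PIP+:chol}, and the plan mirrors the uniform-precision argument: first identify the five generalized constants $\deltaUS,\omegaU,\deltaQT,\omegaQ,\deltaTS$ from \eqref{eq:PIP+:res:US}--\eqref{eq:PIP+:R=TS} as $\bigO{\epslo}$ by applying Theorem~\ref{thm:PIP-MP} to each of the two $\BCGSPIPMP$ calls in Algorithm~\ref{alg:PIP+MP}, then invoke Theorem~\ref{thm:PIP+}, and finally dispatch the Cholesky residual by a short separate computation. The genuinely new point relative to the uniform case is that the hypotheses of Theorem~\ref{thm:PIP-MP} must be checked for the \emph{second} call, whose input is the computed factor $\bUUbar$ rather than $\bXX$.

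First I would apply Theorem~\ref{thm:PIP-MP} to $[\bUUbar,\SSbar] = \BCGSPIPMP(\bXX,\IOnoarg)$. Its condition-number hypotheses hold under the standing assumption (all the internal constants $\rhomax,\ximax,\rho_p,\xi_p$ are $\bigO{\epslo}$, so $\delta_{\max}\kappa^2(\bXX)\le\tfrac14$ subsumes them), and its $\IOnoarg$ hypotheses are exactly the ones assumed here. This yields \eqref{eq:PIP+:res:US} with $\deltaUS=\bigO{\epslo}$, \eqref{eq:PIP+:LOO:U} with $\omegaU=\bigO{\epslo}$, and a Cholesky residual $\SSbar^T\SSbar=\bXX^T\bXX+\bigO{\epslo}\norm{\bXX}^2$ for the first call. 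From $\omegaU\kappa^2(\bXX)\le\tfrac14$, the computations of \eqref{eq:thm:PIP+:normU}--\eqref{eq:thm:PIP+:kappaU} give $\norm{\bUUbar}^2\le\tfrac32$, $\sigmin^2(\bUUbar)\ge\tfrac12$, and hence $\kappa(\bUUbar)\le\sqrt3$.

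Next I would apply Theorem~\ref{thm:PIP-MP} to $[\bQQbar,\TTbar] = \BCGSPIPMP(\bUUbar,\IOnoarg)$. The bound $\kappa(\bUUbar)\le\sqrt3$ makes its condition-number hypotheses hold with room to spare, and since the block columns of $\bUUbar$ satisfy $\kappa(\vU_k)\le\kappa(\bUUbar)\le\kappa(\bXX)$ by \cite[Corollary~8.6.3]{GolV13} (the case $\kappa(\bXX)<\sqrt3$ being trivially well conditioned), the $\IOnoarg$ hypotheses transfer to the second call. This yields \eqref{eq:PIP+:res:QT} with $\deltaQT=\bigO{\epslo}$, \eqref{eq:PIP+:LOO:Q} with $\omegaQ=\bigO{\epslo}$, and $\TTbar^T\TTbar=\bUUbar^T\bUUbar+\bigO{\epslo}\norm{\bUUbar}^2$. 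Since $\RR=\TT\SS$ is formed in low precision, standard rounding-error analysis gives \eqref{eq:PIP+:R=TS} with $\deltaTS=\bigO{\epslo}$, so \eqref{eq:PIP+MP:constants} is established.

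Finally I would assemble the conclusions. As all five constants are $\bigO{\epslo}$ and hence far below $1$, the hypotheses $\omegaU\kappa^2(\bXX)\le\tfrac12$ (from $\delta_{\max}\kappa^2(\bXX)\le\tfrac14$) and $\omegaQ\le\tfrac16$ of Theorem~\ref{thm:PIP+} hold, so \eqref{eq:thm:PIP+:LOO}--\eqref{eq:thm:PIP+:res} give $\norm{I-\bQQbar^T\bQQbar}\le 3\omegaQ=\bigO{\epslo}$ and $\norm{\DeltabDD}\le(\deltaUS+\sqrt6\,\deltaQT+9\,\deltaTS)\norm{\bXX}=\bigO{\epslo}\norm{\bXX}$. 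For the Cholesky residual I would follow the proof of Corollary~\ref{cor:PIP+:chol}: expand $\RRbar^T\RRbar$ using \eqref{eq:PIP+:R=TS}, substitute the second-call Cholesky residual, regroup $\SSbar^T\bUUbar^T\bUUbar\SSbar=(\bUUbar\SSbar)^T(\bUUbar\SSbar)$, and then use the first-call residual $\bUUbar\SSbar=\bXX+\DeltabDDUS$ together with $\norm{\SSbar}\le\sqrt6\,\norm{\bXX}$ (from \eqref{eq:thm:PIP+:normS}), $\norm{\TTbar}\le3$ (from \eqref{eq:thm:PIP+:normT}), and $\norm{\bUUbar}^2\le\tfrac32$ to conclude $\RRbar^T\RRbar=\bXX^T\bXX+\bigO{\epslo}\norm{\bXX}^2$. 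The per-$k$ statements follow by running the same chain of arguments on $\bXX_k$, which again satisfies $\delta_{\max}\kappa^2(\bXX_k)\le\delta_{\max}\kappa^2(\bXX)\le\tfrac14$. The main obstacle is precisely the middle step: confirming that the conditioning and $\IOnoarg$ hypotheses of Theorem~\ref{thm:PIP-MP} genuinely pass to the call on $\bUUbar$, which hinges on first extracting $\kappa(\bUUbar)\le\sqrt3$, together with the bookkeeping that $\delta_{\max}$ as defined omits $\deltaTS$ and the internal constants of Theorem~\ref{thm:PIP-MP}, so one leans throughout on all rounding-error constants being uniformly $\bigO{\epslo}$.
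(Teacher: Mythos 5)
Your proposal is correct and follows essentially the same route as the paper, which states this corollary as a direct consequence of applying Theorem~\ref{thm:PIP-MP} to each of the two \BCGSPIPMP calls in Algorithm~\ref{alg:PIP+MP} to obtain \eqref{eq:PIP+MP:constants} and then invoking Theorem~\ref{thm:PIP+}. Your additional bookkeeping---extracting $\kappa(\bUUbar)\leq\sqrt{3}$ so that the hypotheses of Theorem~\ref{thm:PIP-MP} transfer to the second call, and assembling the Cholesky residual as in Corollary~\ref{cor:PIP+:chol}---simply fills in details the paper leaves implicit (note the \BCGSPIPMP in the statement is a typo for the reorthogonalized variant, as you correctly assumed).
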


\subsection{Two-precision \texttt{BCGS-PIPI+}} \label{sec:BCGSPIPIROMP}

By standard rounding-error analysis we can show that for \eqref{eq:PIPI+:vSk}--\eqref{eq:PIPI+:Tkk} in \BCGSPIPIROMP,
\[
    \deltaQX, \deltaQS, \deltaQU, \deltaQT = \bigO{\epslo}
\]
and
\[
    \deltaXX, \deltaSS, \deltacholone, \deltaU, \deltaUU, \deltaTT, \deltacholtwo, \deltaQ = \bigO{\epshi}.
\]
The following corollary follows directly from these constants and the development in Section~\ref{sec:bcgs_pipi_ro}, in a similar manner as Corollaries~\ref{cor:PIPI+:LOO}, \ref{cor:PIPI+:res}, and \ref{cor:PIPI+:cholres}.  Consequently, we must conclude that \BCGSPIPIROMP is also dominated by $\bigO{\epslo}$ error.

\begin{corollary} \label{cor:PIPI+MP}
    Assume that $\bigO{\epslo} \kappa^2(\bXX) \leq \frac{1}{2}$ and that for all $\vX \in \spR^{m \times s}$ with $\kappa(\vX) \leq \kappa(\bXX)$, $[\vQbar, \Rbar] = \IO{\vX}$ satisfy
    \begin{align*}
        \Rbar^T \Rbar
        & = \vX^T \vX + \DeltaE, \quad \norm{\DeltaE} \leq \bigO{\epslo} \norm{\vX}^2 \\ 
        \vQbar \Rbar
        & = \vX + \DeltavD, \quad \norm{\DeltavD} \leq \bigO{\epslo} \norm{\vX} \mbox{ and} \\
        \norm{I - \vQbar^T \vQbar}
        & \leq \frac{\bigO{\epslo}}{1 - \bigO{\epslo} \kappa^2(\vX)}. 
    \end{align*}
    Then for $[\bQQbar, \RR] = \BCGSPIPIROMP(\bXX, \IOnoarg)$, the following hold for all $k \in \{1, \ldots, p\}$:
    \begin{align*}
        \RRbar_k^T \RRbar_k
        & = \bXX_k^T \bXX_k + \DeltaEE_k, \quad \norm{\DeltaEE_k} \leq \bigO{\epslo} \norm{\bXX_k}^2; \\
        \bQQbar_k \RRbar_k
        & = \bXX_k + \DeltabDD_k, \quad \norm{\DeltabDD_k} \leq \bigO{\epslo} \norm{\bXX_k} \\
        \norm{I - \bQQbar_k^T \bQQbar_k}
        & \leq \frac{\bigO{\epslo}}{1 - \bigO{\epslo} \kappa^2(\bXX_k)} \leq \bigO{\epslo}.
    \end{align*}
\end{corollary}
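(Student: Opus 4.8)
The plan is to reuse, essentially verbatim, the precision-agnostic development of Section~\ref{sec:bcgs_pipi_ro}. Lemmas~\ref{lem:PIPI+:SPD} and~\ref{lem:PIPI+}, Theorem~\ref{thm:PIPI+:LOO}, and Corollaries~\ref{cor:PIPI+:LOO}, \ref{cor:PIPI+:res}, and~\ref{cor:PIPI+:cholres} were all phrased in terms of the abstract constants $\delta_* \in (0,1)$ appearing in the rounding model~\eqref{eq:PIPI+:vSk}--\eqref{eq:PIPI+:Tkk}, and nothing in their proofs assumed a uniform working precision. So the only genuinely new task is to pin down what those constants are when the intermediate quantities are produced by Algorithm~\ref{alg:PIPI+MP} instead, and then to confirm that the hypotheses of the uniform-precision corollaries still hold with $\eps$ replaced by $\epslo$.

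First I would run standard rounding-error analysis~\cite{Hig02} line by line through Algorithm~\ref{alg:PIPI+MP}. The communicated quantities --- $\SS_{1:k-1,k} = \bQQ_{k-1}^T\vX_k$ (line~\ref{line:PIPI+MP:vSk}), $\vV_k = \vX_k - \bQQ_{k-1}\SS_{1:k-1,k}$ (line~\ref{line:PIPI+MP:Vk}), the analogous $\TT_{1:k-1,k} = \bQQ_{k-1}^T\vU_k$ and $\vW_k = \vU_k - \bQQ_{k-1}\TT_{1:k-1,k}$, and the finalization $\RR_{1:k-1,k} = \SS_{1:k-1,k} + \TT_{1:k-1,k}S_{kk}$ --- are formed in the low precision, giving $\deltaQX,\deltaQS,\deltaQU,\deltaQT = \bigO{\epslo}$. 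The locally computed quantities --- $\Omega_k = \vX_k^T\vX_k$ (line~\ref{line:PIPI+MP:Omveck}) and its counterpart $P_k = \vU_k^T\vU_k$, the two shifted-Gram subtractions, the two Cholesky factorizations, and the two block triangular solves $\vU_k = \vV_kS_{kk}^\inv$ and $\vQ_k = \vW_kT_{kk}^\inv$ --- are carried out in the high precision, so $\deltaXX,\deltaSS,\deltacholone,\deltaU,\deltaUU,\deltaTT,\deltacholtwo,\deltaQ = \bigO{\epshi}$. Since $\epshi \leq \epslo$, every one of these constants is at most $\bigO{\epslo}$; moreover, the assumption $\bigO{\epslo}\kappa^2(\bXX) \leq \frac{1}{2}$, together with the stated \IOnoarg bounds, furnishes the $\kappa$-restrictions and assumption~\eqref{eq:thm:PIPI+:LOO:assump} needed to invoke Lemmas~\ref{lem:PIPI+:SPD} and~\ref{lem:PIPI+} and Theorem~\ref{thm:PIPI+:LOO} --- in particular, it guarantees that the two shifted Gram matrices are symmetric positive definite, so both Cholesky factorizations are well defined.

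With the constants identified, the hypotheses of Corollary~\ref{cor:PIPI+:LOO} (on the LOO), Corollary~\ref{cor:PIPI+:res} (on the residual), and Corollary~\ref{cor:PIPI+:cholres} (on the Cholesky residual) all hold verbatim with $\eps$ replaced by $\epslo$, the required \IOnoarg bounds being exactly those assumed here. The induction over $k$ carried out in those corollaries then yields $\norm{I - \bQQbar_k^T\bQQbar_k} \leq \bigO{\epslo}/(1 - \bigO{\epslo}\kappa^2(\bXX_k)) \leq \bigO{\epslo}$, $\bQQbar_k\RRbar_k = \bXX_k + \DeltabDD_k$ with $\norm{\DeltabDD_k} \leq \bigO{\epslo}\norm{\bXX_k}$, and $\RRbar_k^T\RRbar_k = \bXX_k^T\bXX_k + \DeltaEE_k$ with $\norm{\DeltaEE_k} \leq \bigO{\epslo}\norm{\bXX_k}^2$, which is precisely the claim. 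I expect the main obstacle to be not conceptual but a matter of careful bookkeeping --- matching each line of Algorithm~\ref{alg:PIPI+MP} (with its split inner products and locally computed triangular solves) to the corresponding relation in~\eqref{eq:PIPI+:vSk}--\eqref{eq:PIPI+:Tkk}, and recognizing that the $\bigO{\epshi}$ accuracy of the local steps cannot sharpen the final bounds: every shifted Gram matrix and every residual contribution is unavoidably polluted by a $\bigO{\epslo}$ communication term (e.g.\ the $\deltaQX$ contribution in~\eqref{eq:lem:PIPI+:DeltaSkk}), so the overall error remains $\bigO{\epslo}$, exactly as the corollary --- and the negative conclusion preceding it --- asserts.
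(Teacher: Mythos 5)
Your proposal matches the paper's own argument: the paper likewise derives, by standard rounding-error analysis of Algorithm~\ref{alg:PIPI+MP}, that $\deltaQX, \deltaQS, \deltaQU, \deltaQT = \bigO{\epslo}$ and $\deltaXX, \deltaSS, \deltacholone, \deltaU, \deltaUU, \deltaTT, \deltacholtwo, \deltaQ = \bigO{\epshi}$, and then states that the corollary follows directly from these constants and the precision-agnostic development of Section~\ref{sec:bcgs_pipi_ro}, exactly as in Corollaries~\ref{cor:PIPI+:LOO}, \ref{cor:PIPI+:res}, and \ref{cor:PIPI+:cholres} with $\eps$ replaced by $\epslo$. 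Your identification of which lines are communicated in $\epslo$ versus computed locally in $\epshi$, and the observation that $\epshi \leq \epslo$ so the low-precision terms dominate, is precisely the paper's reasoning.
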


\begin{remark}
    Our analytical framework is more general than for an arbitrary uniform precision or even two precisions, which we have focused on.  Indeed, it is not hard to see that regardless of the number of precisions used in these algorithms, the lowest precision will dominate the LOO and residual bounds.  At the same time, numerical experiments in the next section demonstrate that our bounds are rather pessimistic for practical scenarios.  More nuanced bounds remain a topic for future work.
\end{remark}
\section{Numerical experiments} \label{sec:experiments}
\subsection{\texttt{BlockStab}: a workflow for BGS comparisons}
We make use of an updated version of the \texttt{BlockStab} code suite \cite{LunOCetal24}\footnote{\url{https://github.com/katlund/BlockStab/releases/tag/v2.1.2024}}.  In this version, multiprecision implementations have been added for both the Advanpix Multiprecision Computing Toolbox\footnote{Version 4.8.3.14460. \url{https://www.advanpix.com/}} and the Symbolic Math Toolbox\footnote{Version depends on MATLAB version. \url{https://mathworks.com/products/symbolic.html}}, along with additional low-sync versions of various block Gram-Schmidt routines, which are analyzed further in \cite{CarLMetal24b}.  The procedure for setting up algorithm comparisons is now streamlined to avoid redundant runs and to allow for different choices of Cholesky factorization implementations.  Finally, a TeX report is auto-generated with a timestamp, which facilitates sharing experiments with collaborators.

\subsection{Comparisons among newly proposed variants}
We perform several numerical experiments to study the stability of \BCGSPIP, \BCGSPIPRO, and\\\BCGSPIPIRO, using four classes of matrices available in \texttt{BlockStab}, namely \default, \glued, \monomial, and \piled. Each matrix class is created using dimensional inputs $m,p,s$, where $m$ denotes the number of rows, $p$ denotes the number of block vectors, and $s$ denotes the number of columns in each block vector. Descriptions of the matrix classes are as follows:
\begin{itemize}
    \item \default: built as $\bXX_t = \bUU \Sigma_t \bVV^T$, where $\bUU \in \spR^{m \times ps}$ is orthonormal, $\bVV \in \spR^{ps\times ps}$ is unitary, and $\Sigma_t \in \spR^{ps \times ps}$ is diagonal with entries drawn from the logarithmic interval $10^{[-t,0]}$.
    \item \glued: first introduced in \cite{SmoBL06} and constructed to cause classical Gram-Schmidt to break down.  The matrix is initialized with a \default matrix $\bXX_t$; then each $m \times s$ block vector is multiplied by $\Sigma_r \Vtil$, where $\Sigma_r, \Vtil \in \spR^{s \times s}$, and $\Vtil$ is unitary.
    \item \monomial: consists of $r$ block vectors $\vX_k = \bmat{\vv_k & A\vv_k & \cdots & A^{t-1} \vv_k}$, $k \in \{1, \ldots, r\}$, where each $\vv_k$ is randomly generated from the uniform distribution and normalized, and $A$ is an $m \times m$ diagonal operator having evenly distributed eigenvalues in $(0.1,10)$.   (Note that $r$ is not necessarily equal to $p$, and likewise $t \neq s$; however $rt = ps$.  Varying $r$ and $t$ allows for generating matrices with different condition numbers.)
    \item \piled: formed as $\bXX = \bmat{\vX_1 & \vX_2 & \cdots & \vX_p}$, where $\vX_1$ is a \default matrix with a small condition number and for $k \in \{2,\ldots,p\}$, $\vX_k = \vX_{k-1} + \vZ_k$, where each $\vZ_k$ is also a \default matrix with the same condition number for all $k$. Toggling the condition numbers of $\vX_1$ and $\{\vZ_k\}_{k=2}^p$ controls the overall conditioning of the test matrix.
\end{itemize}
We set $m = 100$, $p = 10$, and $s=2$ for \glued and \default matrices; $m = 2000$, $p = 120$, $s = 10$ for \monomial matrices; and $m = 100$, $p = 10$, $s = 5$ for \piled matrices.

To illustrate the numerical behavior of the algorithms from Sections~\ref{sec:pip_variants} and \ref{sec:mp}, we plot the LOO~\eqref{eq:loo} and relative Cholesky residual (i.e., \eqref{eq:chol_res} divided by $\norm{\bXX}^2$) of each algorithm versus the condition number of the matrix; we refer to these plots as $\kappa$-plots, as they are relative to the changing condition number $\kappa(\bXX)$. To observe the effects of the choice of \IOnoarg, we use \HouseQR and \CholQR, where a variant of Cholesky factorization is used to bypass MATLAB's \texttt{chol} protocol for halting the computation when a matrix loses numerical positive definiteness. One can regard \HouseQR as a placeholder for \TSQR, as their numerical behavior is similar, even though the communication properties would differ in practical distributed computing settings.

Double precision ($\eps = 2^{-53} \approx 10^{-16}$) is used for uniform-precision methods.  Advanpix is used to simulate quadruple precision ($\epshi = 2^{-113} \approx 10^{-32}$) in multiprecision algorithms, while the low precision is set to double ($\epslo = \eps$).\footnote{
Users who don't have access to additional toolboxes can still reproduce the trends in our multiprecision results by replacing \texttt{"mp\_pair":["double", "quad"]} in \url{https://github.com/katlund/BlockStab/blob/master/tests/configs/bcgs_pip_reortho.json} with \texttt{"mp\_pair":["single", "double"]}.
}

All numerical tests are run in MATLAB 2022a. Every test is run on a Lenovo ThinkPad E15 Gen 2 with 8GB memory and AMD Ryzen 5 4500U CPU with Radeon Graphics. The CPU has 6 cores with 384KiB L1 cache, and 3MiB L2 cache at a clockrate of 1 GHz, as well as 8MiB of shared L3 cache.  The script\\\texttt{test\_bcgs\_pip\_reortho.m} can be used for regenerating all plots in this section.

Figure~\ref{fig:pip_vs_reorth} illustrates the stability of reorthogonalized variants compared to\\\BCGSPIP in uniform precision. We see that \BCGSPIP follows a $\bigO{\eps} \kappa^2(\bXX)$ LOO trend until $\kappa(\bXX) \approx \frac{1}{\sqrt{\eps}} \approx 10^8$, as expected.  Meanwhile the reorthogonalized variants reach nearly $10^{-16}$ LOO, regardless of the choice of \IOnoarg, until $\kappa(\bXX) \approx 10^8$. After this point, we observe breakdowns and an increasing loss of orthogonality for all methods.  Missing points for large condition numbers are due to \texttt{NaN} being computed during the Cholesky factorization, resulting from operations like $\frac{\texttt{Inf}}{\texttt{Inf}}$ or $\frac{\texttt{Inf}}{0}$.

\begin{figure}[htbp!]
	\begin{center}
	    \begin{tabular}{cc}
	          \resizebox{.325\textwidth}{!}{\includegraphics[trim={0 0 219pt 0},clip]{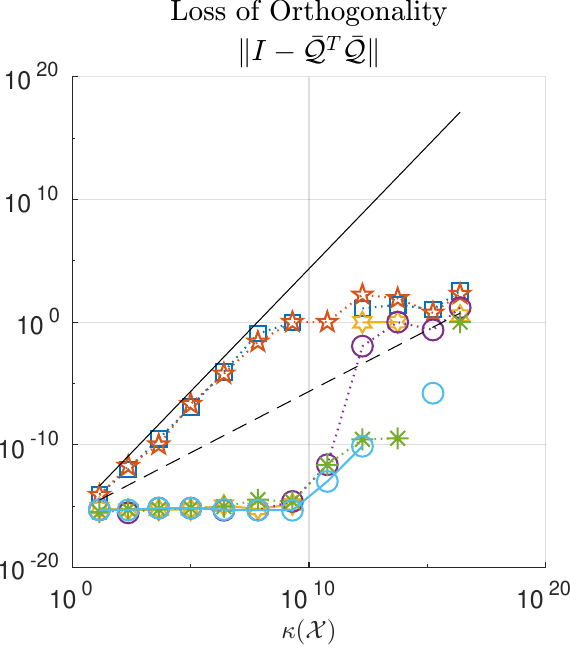}} &
	         \resizebox{.605\textwidth}{!}{\includegraphics{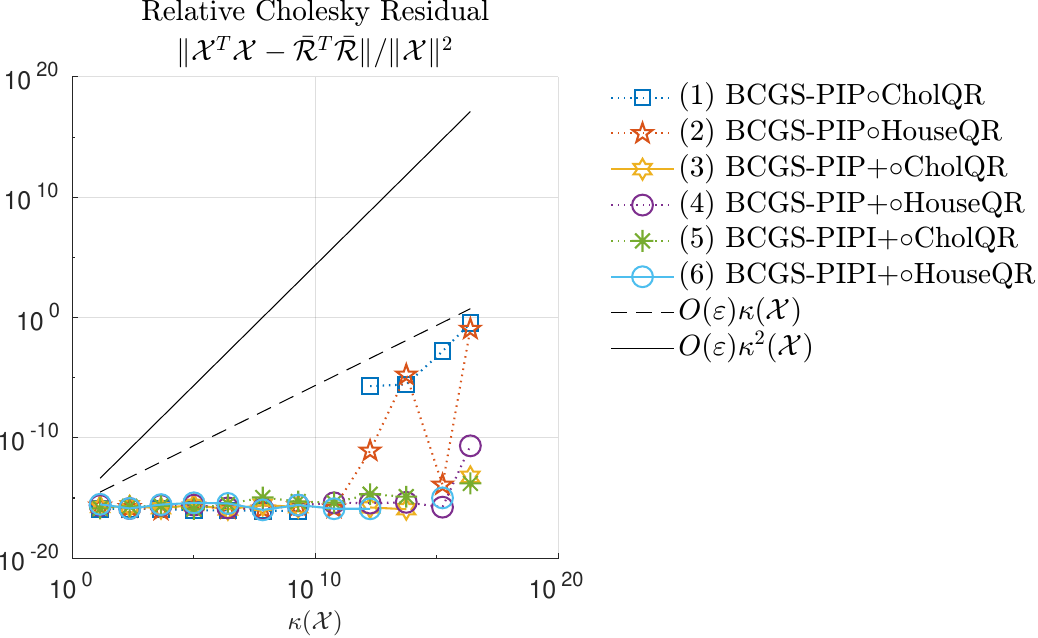}}
	    \end{tabular}
	\end{center}
	\caption{$\kappa$-plots for \glued matrices.\label{fig:pip_vs_reorth}}
\end{figure}

The effect of the choice of \IOnoarg for uniform-precision methods can be seen in Figure~\ref{fig:reorth}.  As there is no assumption on the LOO of the \IOnoarg in Corollary~\ref{cor:PIP+:LOO}, we observe that \CholQR works well for \BCGSPIPRO. On the other hand, Theorem~\ref{thm:PIPI+:LOO} places a LOO restriction on the \IOnoarg for \BCGSPIPIRO.  Indeed, \CholQR does not satisfy the assumption, so the proven LOO and residual bounds are not guaranteed to hold, and we can see $\BCGSPIPIRO\circ\CholQR$ failing to reach around $10^{-16}$ LOO even for small condition numbers.  Again, both \BCGSPIPRO and \BCGSPIPIRO exhibit an increasing LOO and residual after $\kappa(\bXX) > 10^8$.

\begin{figure}[htbp!]
	\begin{center}
	    \begin{tabular}{cc}
	          \resizebox{.33\textwidth}{!}{\includegraphics[trim={0 0 207pt 0},clip]{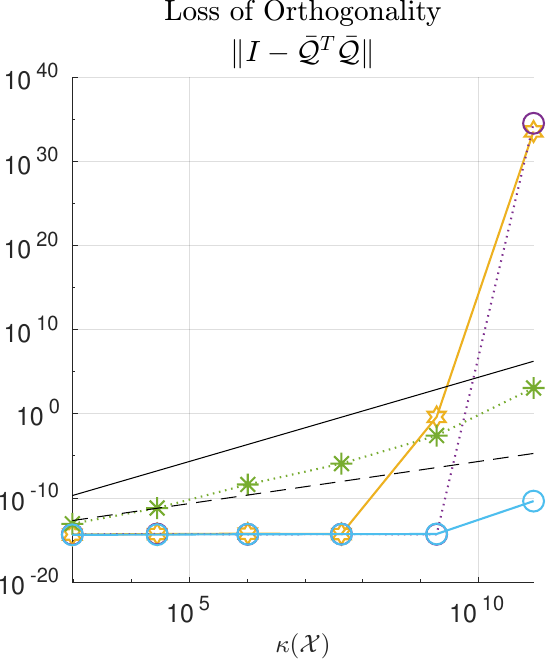}} &
	         \resizebox{.605\textwidth}{!}{\includegraphics{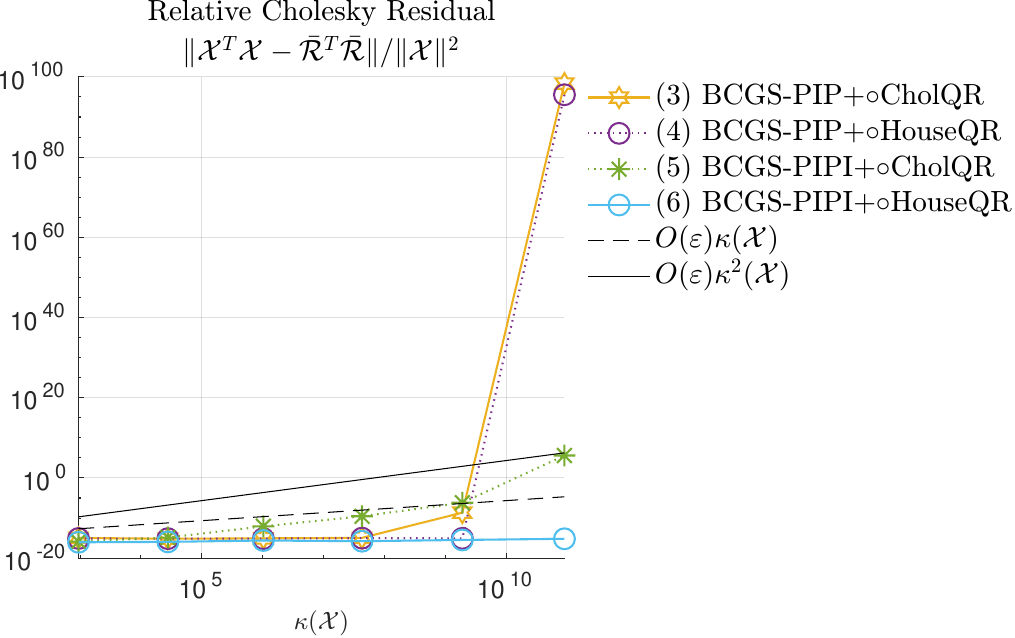}}
	    \end{tabular}
	\end{center}
	\caption{$\kappa$-plots for \monomial matrices.\label{fig:reorth}}
\end{figure}

Figure~\ref{fig:reorth_mp_default} compares two-precision with uniform-precision algorithms.  For \default matrices with or without mixed precision, reorthogonalization keeps the LOO near $10^{-16}$ as long as $\kappa(\bXX) \leq 10^8$ and below $\bigO{\eps} \kappa(\bXX)$ otherwise.  Although \BCGSPIPIROMP appears to behave well even for high condition numbers, the \default matrices are known to be ``easy" and may not capture all potential numerical behaviors.

\begin{figure}[htbp!]
	\begin{center}
	    \begin{tabular}{cc}
	         \resizebox{.3\textwidth}{!}{\includegraphics[trim={0 0 230pt 0},clip]{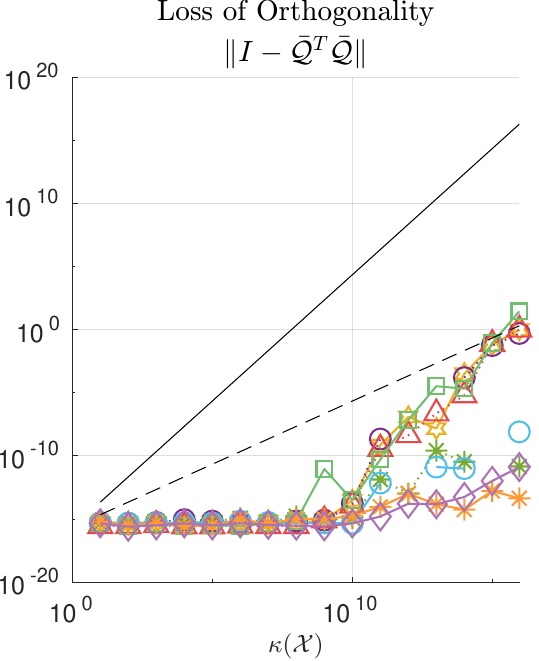}}&
	    \resizebox{.605\textwidth}{!}{\includegraphics{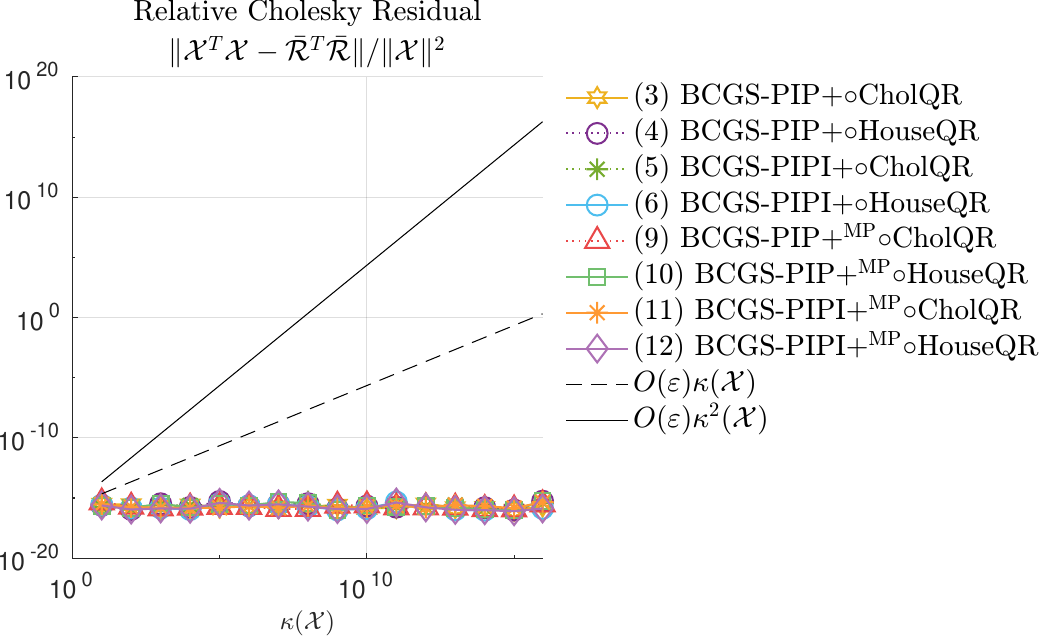}}
	    \end{tabular}
	\end{center}
	\caption{$\kappa$-plots for \default matrices. \label{fig:reorth_mp_default}}
\end{figure}

The \glued matrices are more challenging and reveal worse behavior in Figure~\ref{fig:reorth_mp_glued}. After $\kappa(\bXX) \approx 10^8$, \BCGSPIPRO has stability problems and the relative Cholesky residuals of $\BCGSPIPIRO\circ\CholQR$ and $\BCGSPIPRO\circ\HouseQR$ become \texttt{NaN}. Mixed precision overcomes this problem for both methods.  The LOO of \BCGSPIPIROMP remains below $\bigO{10^{-16}} \kappa(\bXX)$ whereas \BCGSPIPROMP begins to exceed this bound.  Notably, the behavior between $\BCGSPIPIROMP\circ\CholQR$ and $\BCGSPIPROMP\circ\HouseQR$ is very similar, despite the lack of guarantees for $\CholQR$.

\begin{figure}[htbp!]
	\begin{center}
	    \begin{tabular}{cc}
	          \resizebox{.307\textwidth}{!}{\includegraphics[trim={0 0 235pt 0},clip]{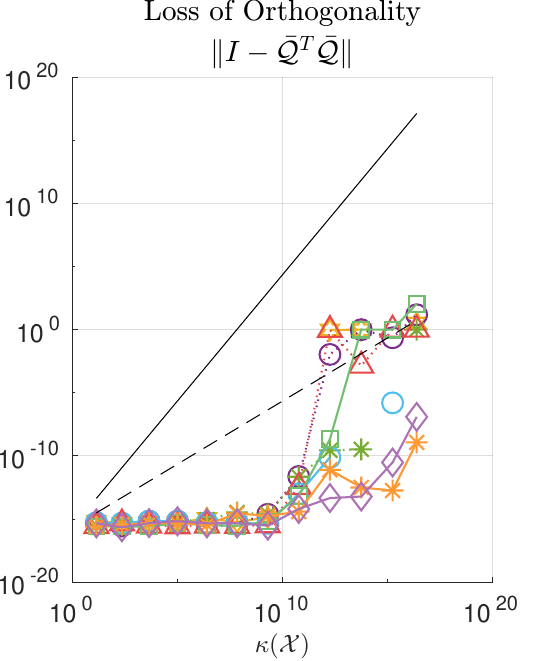}} &
	         \resizebox{.605\textwidth}{!}{\includegraphics{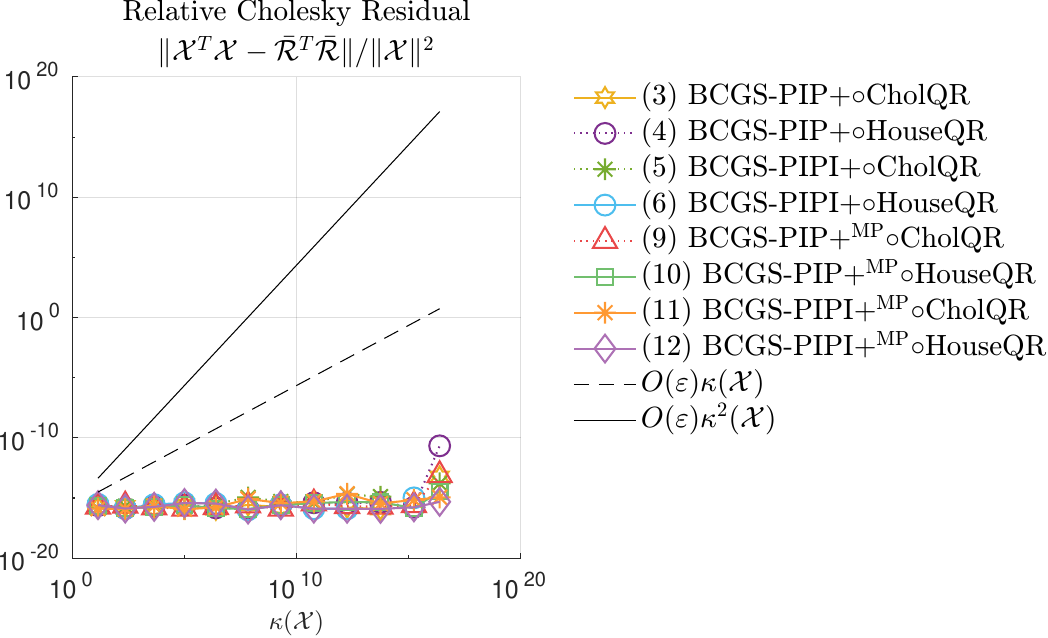}}
	    \end{tabular}
	\end{center}
	\caption{$\kappa$-plots for \glued matrices.\label{fig:reorth_mp_glued}}
\end{figure}

Figures~\ref{fig:reorth_mp_default} and \ref{fig:reorth_mp_glued} might trick the reader into concluding that $\BCGSPIPIROMP\circ\CholQR$ is rather reliable, even without theoretical bounds. The \piled matrices in Figure~\ref{fig:mp_piled} should dispel this notion.  Neither $\BCGSPIPIRO\circ\CholQR$ nor $\BCGSPIPIROMP\circ\CholQR$ can attain $10^{-16}$ LOO for even the smallest condition numbers.  And even $\BCGSPIPIROMP\circ\HouseQR$ finally manages to lose all orthogonality for $\kappa(\bXX) < 10^{16}$, which is still numerically nonsingular in double precision. Moreover, we observe rather erratic behavior in the LOO of both variants of $\BCGSPIPROMP$ once $\kappa(\bXX) \geq 10^{8}$, which emphasizes the lack of predictability outside of the bounds proven in Section~\ref{sec:pip_variants}.
\begin{figure}[htbp!]
	\begin{center}
	    \begin{tabular}{cc}
	          \resizebox{.295\textwidth}{!}{\includegraphics[trim={0 0 235pt 0},clip]{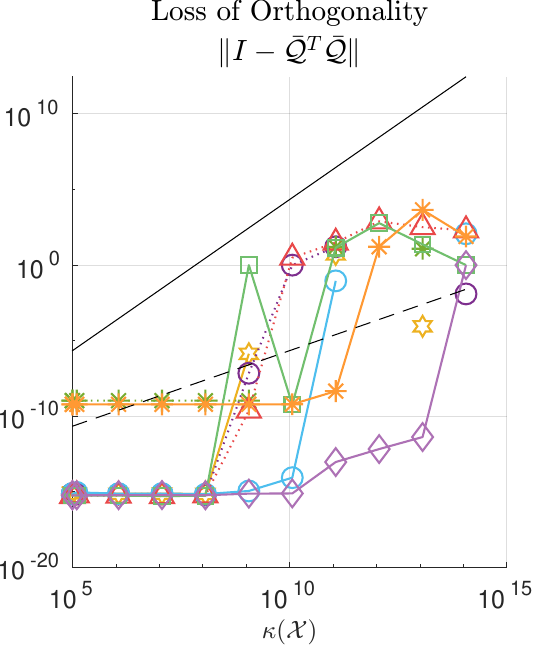}} &
	         \resizebox{.605\textwidth}{!}{\includegraphics{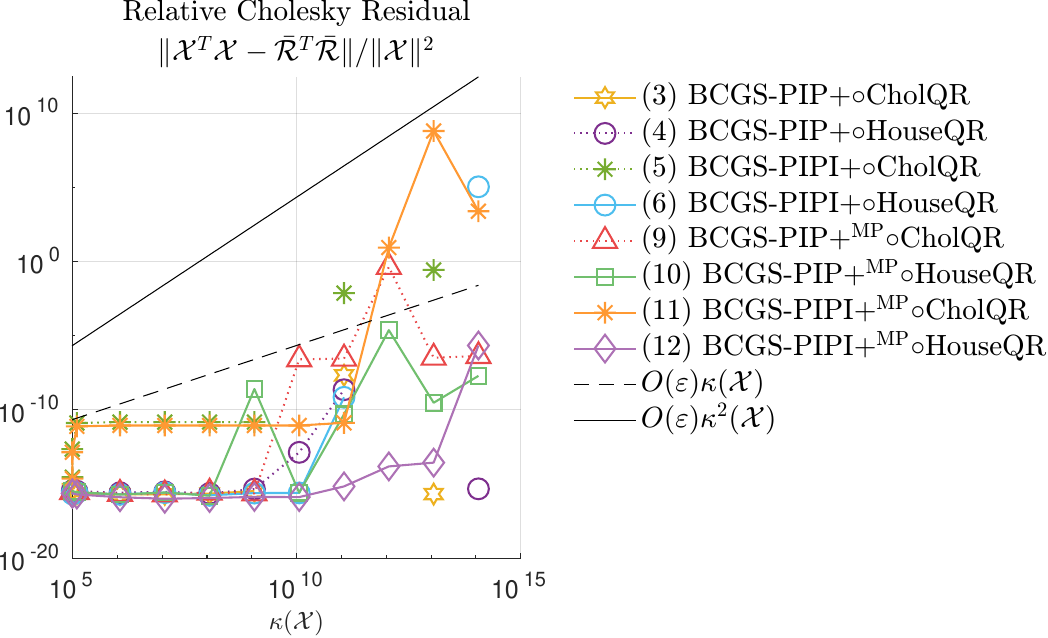}}
	    \end{tabular}
	\end{center}
	\caption{$\kappa$-plots for \piled matrices.\label{fig:mp_piled}}
\end{figure}
\section{Conclusions} \label{sec:conclusions}
Reorthogonalization is a simple technique for regaining stability in a Gram-Schmidt procedure.  We have introduced and examined two reorthogonalized variants of \BCGSPIP, \BCGSPIPRO and \BCGSPIPIRO, and demonstrated that both can achieve $\bigO{\eps}$ loss of orthogonality under transparent conditions on their intraorthogonalization routines and on the condition number of $\bXX$, namely that $\bigO{\eps} \kappa^2(\bXX) \leq 1/2$.  We have carried out the analysis in a general enough fashion so that results can be easily extended to multiprecision paradigms, and we have proposed two-precision variants \BCGSPIPROMP and \BCGSPIPIROMP.  Numerical experiments verify our findings and demonstrate that despite the lack of theoretical bounds, \BCGSPIPIROMP behaves well for several classes of test matrices and nearly overcomes the restriction on $\kappa(\bXX)$.

At the same time, the restriction on $\kappa(\bXX)$ may not be so problematic when \BCGSPIPRO or \BCGSPIPIRO forms the backbone of a block Arnoldi or GMRES algorithm and can be restarted; see, e.g., \cite{Lun23, XuAD23, YamHBetal24}.  In fact, with the recent modular framework developed for the backward stability analysis of GMRES \cite{ButHMetal24}, determining reliable, adaptive restarting heuristics should be quite straightforward.  In such scenarios, one usually also has access to preconditioning, which can a priori reduce the conditioning of the basis to be orthogonalized and further improve overall stability.  As \BCGSPIPRO or \BCGSPIPIRO both require the same number of sync points as \BCGS, but with provably better loss of orthogonality, they are promising, stable algorithms for a wide variety of applications.

\section*{Acknowledgments}%
\addcontentsline{toc}{section}{Acknowledgments}
The second author would like to thank the Computational Methods in Systems and Control Theory group at the Max Planck Institute for Dynamics of Complex Technical Systems for funding the fourth author's visit in March 2023. The fourth author would like to thank the Chemnitz University of Technology for funding the second author's visit in July 2023.
The first, third, and fourth authors are supported by the European Union (ERC, inEXASCALE, 101075632). Views and opinions expressed are those of the authors only and do not necessarily reflect those of the European Union or the European Research Council. Neither the European Union nor the granting authority can be held responsible for them. The first and the fourth authors acknowledge support from the Charles University GAUK project No. 202722 and the Exascale Computing Project (17-SC-20-SC), a collaborative effort of the U.S.~Department of Energy Office of Science and the National Nuclear Security Administration. The first author additionally acknowledges support from the Charles University Research Centre program No.~UNCE/24/SCI/005.


\addcontentsline{toc}{section}{References}
\bibliographystyle{abbrvurl}
\bibliography{BlockStab}

\appendix
\section{Proofs of the theorems in Section \ref{sec:mp} }
\subsection{Proof of Lemma \ref{lem:PIP:3.1}}\label{app:lem:PIP:3.1}
\begin{proof}
    Assumption \eqref{eq:lem:PIP:3.1:cholres} implies
    \begin{equation} \label{eq:lem:PIP:3.1:normR}
        \norm{\RRbar}^2 \leq (1 + \xi) \norm{\bXX}^2,
    \end{equation}
    and together with $\xi \kappa^2(\bXX) \leq \frac{1}{2}$ and the perturbation theory of singular values~\cite[Corollary~8.6.2]{GolV13}, we can derive a bound on $\norm{\RRbar^\inv}^2$:
    \begin{equation} \label{eq:lem:PIP:3.1:normRinv}
        \norm{\RRbar^\inv}^2 = \dfrac{1}{\sigmin^2(\RRbar)} \leq \dfrac{1}{\sigmin^2(\bXX) (1 - \xi \kappa^2(\bXX) )}.
    \end{equation}
    Multiplying \eqref{eq:lem:PIP:3.1:res} on the left by its transpose, then on the left by $\RRbar^\tinv$ and on the right by $\RRbar^\inv$ and substituting \eqref{eq:lem:PIP:3.1:cholres} leads to
    \begin{equation} \label{eq:lem:PIP:3.1:QTQ}
        \bQQbar^T \bQQbar = I - \RRbar^\tinv (\DeltaEE - \bXX^T \DeltabDD - \DeltabDD^T \bXX) \RRbar^\inv,
    \end{equation}
    where we have dropped the quadratic term.  Taking the norm and applying bounds \eqref{eq:lem:PIP:3.1:cholres}, \eqref{eq:lem:PIP:3.1:res}, \eqref{eq:lem:PIP:3.1:normR}, and \eqref{eq:lem:PIP:3.1:normRinv} to \eqref{eq:lem:PIP:3.1:QTQ} leads to a quadratic inequality:
    \begin{equation} \label{eq:lem:PIP:3.1:normQ^2}
        \begin{split}
            \norm{\bQQbar}^2
            & \leq 1 + \norm{\RRbar^\inv}^2 \left( \xi \norm{\bXX}^2
            + 2 \rho  \norm{\bXX} (\norm{\bXX} + \norm{\bQQbar} \norm{\RRbar} )\right) \\
            & \leq 1 + \frac{ (\xi + 2 \rho) \norm{\bXX}^2
            + 2 \rho \sqrt{1 + \xi} \norm{\bXX}^2 \norm{\bQQbar}}{\sigmin^2(\bXX) (1 - \xi \kappa^2(\bXX) )} \\
            & \leq \frac{1 + 2 \rho \kappa^2(\bXX)}{1 - \xi \kappa^2(\bXX)}
            +  \frac{2\rho \sqrt{1 + \xi} \kappa^2(\bXX)}{1 - \xi \kappa^2(\bXX)} \norm{\bQQbar}.
        \end{split}
    \end{equation}
    Applying the assumptions $\xi \kappa^2(\bXX) \leq \frac{1}{2}$ and $\rho \kappa^2(\bXX) \leq \frac{1}{4}$, we see that
    \[
        \frac{1 + 2 \rho \kappa^2(\bXX)}{1 - \xi \kappa^2(\bXX)} \leq 3
        \mbox{ and }
        \frac{2 \rho \kappa^2(\bXX)}{1 - \xi \kappa^2(\bXX)} \leq 1,
    \]
    so \eqref{eq:lem:PIP:3.1:normQ^2} simplifies to
    \begin{equation} \label{eq:lem:PIP:3.1:normQ:quad}
        \norm{\bQQbar}^2 - \sqrt{1 + \xi} \norm{\bQQbar} - 3 \leq 0,
    \end{equation}
    which can be easily solved with the quadratic formula to reveal
    \begin{equation*}
        \norm{\bQQbar}
        \leq \frac{1}{2} \bigl(\sqrt{1 + \xi} + \sqrt{13 + \xi} \bigr)
        \leq \frac{1}{2} \bigl(\sqrt{2} + \sqrt{14} \bigr)
        \leq 3,
    \end{equation*}
    thus proving \eqref{eq:lem:PIP:3.1:normQ}.  Revisiting \eqref{eq:lem:PIP:3.1:QTQ}, we take the norm of $\norm{I - \bQQbar^T \bQQbar}$ and apply \eqref{eq:lem:PIP:3.1:normQ} along with bounds \eqref{eq:lem:PIP:3.1:cholres}, \eqref{eq:lem:PIP:3.1:res}, \eqref{eq:lem:PIP:3.1:normR}, \eqref{eq:lem:PIP:3.1:normRinv} again to arrive at \eqref{eq:lem:PIP:3.1:LOO}.  Finally, \eqref{eq:lem:PIP:3.1:D} follows from \eqref{eq:lem:PIP:3.1:normR} and \eqref{eq:lem:PIP:3.1:normQ}.
\end{proof}

\subsection{Proof of Lemma \ref{lem:PIP:3.2}}\label{app:lem:PIP:3.2}
\begin{proof}
    Writing $\RRbar_k$ as 
    \begin{equation*}
        \RRbar_k = \bmat{
            \RRbar_{k-1} & \RRbar_{1:k-1,k}\\
            0 & \Rbar_k
        }
    \end{equation*}
    implies
    \begin{equation} \label{eq:lem:PIP:3.2:blockR}
        \RRbar^T_k \RRbar_k = \bmat{
            \RRbar_{k-1}^T \RRbar_{k-1}      & \RRbar_{k-1}^T \RRbar_{1:k-1,k} \\
            \RRbar^T_{1:k-1,k} \RRbar_{k-1}  & \RRbar^T_{1:k-1,k} \RRbar_{1:k-1,k} + \Rbar^T_k \Rbar_k
        }.
    \end{equation}
    The upper diagonal block of \eqref{eq:lem:PIP:3.2:blockR} can be handled directly from \eqref{eq:PIP:cholres}. Lemma~\ref{lem:PIP:3.1} holds for $k-1$ via the assumptions \eqref{eq:PIP:cholres} and \eqref{eq:PIP:res}. Then for the off-diagonals, from \eqref{eq:lem:PIP:3.1:normQ} and \eqref{eq:PIP:vRk}, we can write
    \begin{equation} \label{eq:lem:PIP:normR-general}
        \norm{\RRbar_{1:k-1,k}}
        \leq \norm{\bQQbar_{k-1}} \norm{\vX_k} + \norm{\DeltavR_k}
        \leq (3 + \deltaQX) \norm{\vX_k} \leq 4 \norm{\vX_k}.
    \end{equation}
    Multiplying \eqref{eq:PIP:vRk} by $\RRbar^T_{k-1}$ from left together with \eqref{eq:PIP:res} leads to
    \begin{equation} \label{eq:lem:PIP:3.2:RTRk}
        \begin{split}
            \RRbar^T_{k-1} \RRbar_{1:k-1,k}
            & = \RRbar^T_{k-1}(\bQQbar^T_{k-1} \vX_k + \DeltavR_k) \\
            & = \bXX^T_{k-1} \vX_k + \DeltaEE_{1:k-1,k}, 
        \end{split}
    \end{equation}
    where $\DeltaEE_{1:k-1,k} := \DeltabDD^T_{k-1} \vX_k + \RRbar^T_{k-1} \DeltavR_k$.  From \eqref{eq:PIP:cholres}, \eqref{eq:PIP:res}, Lemma~\ref{lem:PIP:3.1}, and \eqref{eq:PIP:vRk} it follows that
    \begin{equation} \label{eq:lem:PIP:3.2:DeltavEk}
        \begin{split}
            \norm{\DeltaEE_{1:k-1,k}}
            & \leq 6 \cdot \rho_{k-1} \norm{\bXX_{k-1}} \norm{\vX_k} + \deltaQX \sqrt{1 + \xi_{k-1}} \norm{\bXX_{k-1}} \norm{\vX_k} \\
            & \leq \bigl(6 \cdot \rho_{k-1} + \sqrt{2} \cdot \deltaQX \bigr) \norm{\bXX_{k-1}} \norm{\vX_k}.
        \end{split}
    \end{equation}
    As for the bottom diagonal block of \eqref{eq:lem:PIP:3.2:blockR}, we can combine \eqref{eq:PIP:Pk}--\eqref{eq:PIP:chol} and rearrange terms to find
    \begin{equation}
        \begin{split}
            \RRbar^T_{1:k-1,k} \RRbar_{1:k-1,k} + \Rbar^T_k \Rbar_k
            & = \Pbar_k + \DeltaF_k + \DeltaC_k \\
            & = \vX_k^T \vX_k + \underbrace{\DeltaP_k + \DeltaF_k + \DeltaC_k}_{=: \DeltaE_k},
        \end{split}
    \end{equation}
    where
    \begin{equation} \label{eq:lem:PIP:3.2:DeltaEk}
        \norm{\DeltaE_k} \leq (\deltaXX + \deltaRR + \deltachol) \norm{\vX_k}^2.
    \end{equation}
    Combining \eqref{eq:PIP:cholres}, \eqref{eq:lem:PIP:3.2:RTRk}, \eqref{eq:lem:PIP:3.2:DeltavEk}, and \eqref{eq:lem:PIP:3.2:DeltaEk} we can write
    \begin{equation*}
        \RRbar^T_k \RRbar_k = \underbrace{\bmat{
            \bXX^T_{k-1} \bXX_{k-1}			& \bXX^T_{k-1} \vX_k \\
            \vX^T_k \bXX_{k-1}		& \vX^T_k\vX_k
        }}_{ = \bXX^T_k \bXX_k} + \underbrace{\bmat{
            \DeltaEE_{k-1}  			& \DeltaEE_{1:k-1,k} \\
            \DeltaEE^T_{1:k-1,k}	& \DeltaE_k
        }}_{ =: \DeltaEE_k},
    \end{equation*}
    where applying \cite[P.15.50]{GarH17} leads to
    \begin{align*}
        \norm{\DeltaEE_k}
        & \leq \norm{\DeltaEE_{k-1}} + 2 \norm{\DeltaEE_{1:k-1,k}} + \norm{\DeltaE_k} \\
        & \leq \xi_{k-1} \norm{\bXX_{k-1}}^2
          + 2 \bigl(6\cdot\rho_{k-1} + \sqrt{2} \cdot \deltaQX \bigr) \norm{\bXX_{k-1}} \norm{\vX_k} \\
        & \quad + (\deltaXX+ \deltachol + \deltaRR) \norm{\vX_k}^2 \\
        & \leq \bigl(\xi_{k-1} + 12 \cdot \rho_{k-1} + 2 \sqrt{2} \cdot \deltaQX + \deltaXX+ \deltachol + \deltaRR \bigr) \norm{\bXX_k}^2,
    \end{align*}
    which proves \eqref{eq:lem:PIP:3.2:cholres}.
    
    To prove \eqref{eq:lem:PIP:3.2:res}, combining \eqref{eq:PIP:Vk} and \eqref{eq:PIP:Qk} yields
    \begin{equation*}
        \vQbar_k \Rbar_k = \vX_k - \bQQbar_{k-1} \RRbar_{1:k-1,k} + \DeltavD_k,
    \end{equation*}
    where $\DeltavD_k = \DeltavV_k + \DeltavG_k$ and
    \begin{equation}\label{eq:lem:PIP:deltaY}
        \begin{split}
            \norm{\DeltavD_k}
            & \leq \deltaQR \norm{\vX_k} + \deltaQ \norm{\vQbar_k} \norm{\Rbar_k} \\
            & \leq (\deltaQR + \deltaQ) \bigl(\norm{\vX_k} + \norm{\vQbar_k} \norm{\Rbar_k} \bigr).
        \end{split}
    \end{equation}
    Writing
    \begin{equation*}
            \begin{split}
            \bQQbar_k\RRbar_k
                & = \bmat{\bQQbar_{k-1} \RRbar_{k-1} & \bQQbar_{k-1} \RRbar_{1:k-1,k} + \vQbar_k \Rbar_k} \\
                & = \underbrace{\bmat{\bXX_{k-1} & \vX_k}}_{= \bXX_k}
                  + \underbrace{\bmat{\DeltabDD_{k-1} & \DeltavD_k}}_{=: \DeltabDD_k},
            \end{split}
    \end{equation*}
    together with \eqref{eq:PIP:res}, Lemma~\ref{lem:PIP:3.1}, and \eqref{eq:lem:PIP:deltaY} gives
    \begin{align*}
        \norm{\DeltabDD_k}
        & \leq \norm{\DeltabDD_{k-1}} + \norm{\DeltavD_k} \\
        & \leq 6 \cdot \rho_{k-1} \norm{\bXX_{k-1}} + (\deltaQR + \deltaQ)(\norm{\vX_k} + \norm{\vQbar_k} \norm{\Rbar_k}) \\
        & \leq \bigl(6 \cdot \rho_{k-1} + \deltaQR + \deltaQ \bigr) \bigl(\norm{\bXX_k} + \norm{\bQQbar_k} \norm{\RRbar_k} \bigr),
    \end{align*}
    which completes the proof.
\end{proof}

\subsection{Proof of theorem \ref{thm:PIP-MP}}\label{app:thm:PIP-MP}
\begin{proof}
    We start with the base case, $k=1$.  By the assumptions \eqref{eq:thm:PIP-MP:IO:cholres} and \eqref{eq:thm:PIP-MP:IO:res}, \eqref{eq:thm:PIP-MP:cholres} and \eqref{eq:thm:PIP-MP:res} follow trivially.  Consequently, by setting $\xi_1 = \bigO{\epslo}$, $\rho_1 = \bigO{\epslo}$ in \eqref{eq:PIP:chol} and \eqref{eq:PIP:res}, respectively, and by applying \eqref{eq:PIP:rho_xi} and \eqref{eq:thm:PIP-MP:assump}, it holds that $\xi_1 \kappa^2(\vX_1) \leq \frac{1}{2}$ and $\rho_1 \kappa^2(\vX_1) \leq \frac{1}{4}$.  Then we can apply Lemma~\ref{lem:PIP:3.1} to conclude \eqref{eq:thm:PIP-MP:LOO} for $k=1$.

    Now, by following standard rounding-error analysis from \cite{Hig02} (particularly \cite[Lemma~6.6, Theorem~8.5, \& Theorem~10.3]{Hig02}), we find that for all $k \in \{2, \ldots, p\}$, there exist constants $\deltaQX, \deltaQR, \deltaXX, \deltaRR, \deltachol, \deltaQ$ such that
    \begin{equation} \label{eq:thm:PIP-MP:constants}
        \deltaQX, \deltaQR = \bigO{\epslo}
        \mbox{ and }
        \deltaXX, \deltaRR, \deltachol, \deltaQ = \bigO{\epshi}.
    \end{equation}
    and \eqref{eq:PIP:vRk}--\eqref{eq:PIP:Qk} hold.

    Consider just $k=2$ for a moment. Then Lemma~\ref{lem:PIP:3.2} can be applied (because we already have the bound \eqref{eq:thm:PIP-MP:LOO} for $k=1$) to conclude
    \begin{align*}
        \RRbar_2^T \RRbar_2
        & = \bXX_2^T \bXX_2 + \DeltaEE_2,
        \quad \norm{\DeltaEE_2} \leq \xi_2 \norm{\bXX_2}^2; \mbox{ and} \\
        \bQQbar_2 \RRbar_2
        & = \bXX_2 + \bDD_2, \quad \norm{\bDD_2} \leq \rho_2 (\norm{\bXX_2} + \norm{\bQQbar_2} \norm{\RRbar_2}),
    \end{align*}
    where $\xi_2 = \bigO{\epslo}$ and $\rho_2 = \bigO{\epslo}$ because $\xi_1 = \bigO{\epslo}$ and the bounds \eqref{eq:thm:PIP-MP:constants}.  Thus \eqref{eq:thm:PIP-MP:cholres} and \eqref{eq:thm:PIP-MP:res} hold for $k=2$, and Lemma~\ref{lem:PIP:3.1} can subsequently be applied to prove \eqref{eq:thm:PIP-MP:LOO} for $k=2$.

    Proceeding by strong induction and identical logic as for $k=2$, we can then conclude \eqref{eq:thm:PIP-MP:cholres}--\eqref{eq:thm:PIP-MP:LOO} hold for all $k \in \{2, \ldots, p\}$.
\end{proof}

\end{document}